\DeclareMathOperator{\Mat}{Mat} 
\DeclareMathOperator{\graph}{graph} 
\DeclareMathOperator{\stab}{Stab}
\DeclareMathOperator{\lcm}{lcm}
\DeclareMathOperator{\ord}{ord} 
\DeclareMathOperator{\coeff}{coeff}
\renewcommand{\vec}[1]{\mathbf{#1}}
\newcommand{\normalb}[2]{N_{#1/#2}}
\newcommand{\ssum}[2]{
	\underset{#1}{\overset{#2}{\sum}}
} %
\DeclareMathOperator{\SSym}{S}
\DeclareMathOperator{\Bl}{Bl} 
\DeclareMathOperator{\crem}{c}
\DeclareMathOperator{\adj}{adj} 
\DeclareMathOperator{\delp}{dP}
\newcommand{\dP}[1]{\delp_{#1}} 
\newcommand{\CB}{\Calc_{\mbox{\tiny\textit{(B)}}}}
\newcommand{\Calo}{\mathcal{O}} 
\newcommand{\Cale}{\mathcal{E}}
\newcommand{\Calc}{\mathcal{C}} 
\newcommand{\Calp}{\mathcal{P}}
\newcommand{\Calq}{\mathcal{Q}} 
\newcommand{\Calr}{\mathcal{R}}
\newcommand{\Call}{\mathcal{L}} 
\newcommand{\Cals}{\mathcal{S}}
\newcommand{\C}{\mathbb{C}} 
\newcommand{\R}{\mathbb{R}}
\newcommand{\Z}{\mathbb{Z}} 
\newcommand{\N}{\mathbb{N}}
\newcommand{\Pj}{\mathbb{P}}
\DeclareMathOperator{\Gl}{Gl}
\DeclareMathOperator{\Id}{Id}
\DeclareMathOperator{\Bir}{Bir} 
\DeclareMathOperator{\Base}{Base}
\DeclareMathOperator{\Fix}{Fix}
\DeclareMathOperator{\Div}{Div} 
\newcommand{\PGl}{\Pj\Gl}
\newtheorem{theorem}{Theorem}[section]
\newtheorem{proposition}[theorem]{Proposition}
\newtheorem{corollary}[theorem]{Corollary}
\newtheorem{lemma}[theorem]{Lemma}
\theoremstyle{definition}
\newtheorem{definition}[theorem]{Definition}
\newtheorem{notation}[theorem]{Notation}
\theoremstyle{remark} 
\newtheorem{remark}[theorem]{Remark}
\newtheorem*{conjecture*}{Conjecture}
\numberwithin{equation}{section}
\title{Growth and integrability of some birational maps in dimension three}
\author[M. Graffeo]{Michele Graffeo} 
\address[M. Graffeo]{Dipartimento di Matematica\\ Politecnico di Milano\\ Piazza Leonardo da Vinci 32\\ 20133 Milan\\ Italy}
\email{michele.graffeo@polimi.it}
\author[G. Gubbiotti]{Giorgio Gubbiotti}
\address[G. Gubbiotti]{Dipartimento di Matematica ``Federigo Enriques'',
	Universit\`a degli Studi di Milano, Via C. Saldini 50, 20133
	Milano, Italy \& INFN Sezione di Milano, Via G. Celoria 16,
	20133 Milano, Italy}
\email{giorgio.gubbiotti@unimi.it}
\subjclass[2020]{Primary 14H70; Secondary 14E07, 14E15, 39A36}
\keywords{Algebraic entropy; Cremona transformation; Discrete integrable systems}
\date{\today}
\begin{document}

\begin{abstract}
    Motivated by the study of the Kahan--Hirota--Kimura discretisation
    of the Euler top, we characterise the growth and integrability
    properties of a collection of elements in the Cremona group of
    a complex projective 3-space using techniques  from algebraic
    geometry. This collection consists of maps obtained by composing the
    standard Cremona transformation $\crem_3\in\Bir(\Pj^3)$ with projectivities that
    permute the fixed points of $\crem_3$ and the points over which
    $\crem_3$ performs a divisorial contraction.  Specifically,
    we show that three behaviour are possible: \textit{(A)} integrable
    with quadratic degree growth and two invariants, \textit{(B)} periodic
    with two-periodic degree sequences and more than two invariants, and \textit{(C)}
    non-integrable with submaximal degree growth and one invariant.
\end{abstract}

\maketitle

\setcounter{tocdepth}{1}
\tableofcontents

\section{Introduction}
\label{sec:intro}

This paper is devoted to the characterisation of the growth and
integrability properties of a collection of birational transformations of
the complex projective space $\Pj^3$, i.e. a subset of the so-called Cremona group,
denoted by $\Bir (\Pj^{3})$ \cite[Section 7.4]{Dolgachev2012book}. Let $\crem_M\in\Bir (\Pj^{M})$ be the  \textit{standard Cremona transformation} of $\Pj^{M}$, i.e. the birational map defined as follows: 
\begin{equation}
        \label{eq:C3} \begin{tikzcd}[row sep=tiny]
                \crem_{M}\colon\Pj^M \arrow[dashed]{r} & \Pj^M
                \\ {[}x_1:\cdots:x_{M+1}{]} \arrow[mapsto]{r} &
                \left[\frac{1}{x_1}:\cdots:\frac{1}{x_{M+1}}\right].
        \end{tikzcd}
\end{equation}
Then, this collection
is obtained by composing $\crem_3$ 
with projectivities of finite order $g\in \PGl(3,\C)$ acting as permutations on a
set of special points for the Cremona transformation. This set,
that we will denote by $\Calr$ (see \Cref{specialpoints}), is
the union of the set of fixed points and of the points over which $\crem_3$ performs a divisorial contraction. Precisely,
the fixed points are the solutions of the (projective) equation
$\crem_{3}([x_1:x_2:x_3:x_4])=[x_1:x_2:x_3:x_4]$, while the remaining
points are the coordinate points, that is, the images of the coordinate
planes of $\Pj^3$ under $\crem_3$. We call this group of projectivities
the \emph{Cremona-cubes group} and we will denote it by $\mathcal{C}$ (see \Cref{def:cubegroup}).

The motivation to study the {Cremona-cubes group} comes from
some recent results on the Kahan--Hirota--Kimura discretisation
\cite{Kahan1993,HirotaKimura2000} (KHK discretisation). Indeed, as
it was noted in \cite{Alonso_et_al2022}, the celebrated integrable
discretisation of the Euler top \cite{Arnold1997,Gold2002} produced
in \cite{HirotaKimura2000} via KHK discretisation is expressible as the
composition of the standard Cremona map with \emph{two projectivities} (see \Cref{lem:eulerdecomposition}). We
will show in \Cref{sec:motivations} that, up to birational equivalence, this is the
prototypical integrable birational map of the form $g\circ \crem_3$ for $g$ in the Cremona-cubes group. In particular,
in \Cref{sec:entropy} we will explain why this birational map is \emph{integrable} in
the sense of the low growth condition.

Before to clarify what do we mean when we speak about growth properties
of a birational map, we need a couple more of definitions.

It is a well known fact that the iteration of a birational map $\Phi\in\Bir(\Pj^{M})$ 
produces a (birational) discrete dynamical
system defined by the recurrence: \begin{equation}\label{eq:rec}
        \left[ x_{1}(n):\cdots:x_{M+1}(n) \right] = \Phi^{n}\left(\left[
        x_{1}(0):\cdots:x_{M+1}(0) \right]\right), 
\end{equation}
applied on some initial datum $\left[ x_{1}(0):\dots:x_{M+1}(0)\right]\in\Pj^{M}$
(see \cite{GJTV_class}). The
issue is then to characterise the asymptotic behaviour of the
dynamics with respect to generic initial conditions. The problem
of characterising the growth of complexity of the iterates
was first considered by Arnol'd in \cite{Arnold1990}
for the class of diffeomorphisms. Its analogue for birational
maps was first developed experimentally in a series of papers (see
\cite{Veselov1992,FalquiViallet1993,Diller1996,Russakovskii1997}), which
ended in the elaboration of the concept of the \emph{algebraic entropy}
\cite{BellonViallet1999}.
Following Arnol'd \cite{Arnold1990} and Veselov \cite{Veselov1992},
the ``good'' measure of the complexity of a birational map
$\Phi\in\Bir(\Pj^{M})$ is the intersection of the
iterated images of a straight line with a generic hyperplane in the
complex projective space. By the Bezout's theorem in projective and multi-projective spaces
(see \cite[\S IV.2]{Shafarevich1994}), this corresponds to the degree of the polynomials
in the entries of $\Phi^{n}$. Rigorously, we quote the following
definition.

\begin{definition}[\cite{BellonViallet1999}]
        \label{def:degree}
        Given a birational map $\Phi\in\Bir(\Pj^{M})$
        \begin{equation}
                \label{eq:birpol}
                [x_1:\cdots:x_{M+1}] \mapsto \left[P_{1}(x_1,\ldots,x_{M+1}):\cdots :P_{M+1}(x_1,\ldots,x_{M+1}) \right],
        \end{equation}
        such that its (homogeneous) polynomial entries $P_{i}\in\C[x_{1},\ldots,x_{M+1}]$ are devoid of common factors,
        that is $\gcd(P_{1},\dots,P_{M+1})=1$, we define its \emph{degree} to
        be:
        \begin{equation}
                \label{eq:dphi}
                d^\Phi = \deg P_{i},
                \quad \text{ for any $i=1,\ldots, M+1$}.
        \end{equation}
        In the same way, for all $n\in\N$ we define $d_n^\Phi $ as the \emph{degree} 
        of the $n$-th iterate to be
        \begin{equation}
                \label{eq:dn}
                d^\Phi_{n} = d^{\Phi^{n}}.
        \end{equation}
\end{definition}

\begin{remark}
        \label{rem:degree}
        We make the following observations.
        \begin{enumerate}
            \item The degree of a birational map is invariant under conjugation by 
                projectivities, but, in general, it is not invariant under change of 
                coordinates (see \cite{BellonViallet1999,HietarintaViallet1998}).
            \item \Cref{def:degree} is not the usual definition of degree
                in algebraic geometry. For instance, in \cite[Section II.6.3]{Shafarevich1994}
                the degree of a (finite) rational map is defined to be the cardinality
                of a generic fibre. Nevertheless, all the rational maps in
                this paper will actually be birational. Hence no ambiguity
                is present, and the numbers $d^{\Phi}$ and $d_{n}^{\Phi}$ are
                uniquely determined by the birational map $\Phi\in\Bir(\Pj^{M})$.
            \item It is crucial in \Cref{def:degree} to require that the
                polynomial entries have no common factors. For a given birational map $\Phi \in\Bir(\Pj^{M})$, 
                after some iterations common
                factors can appear and they must be removed. This process
                has geometric meaning which we will discuss later in this section.
        \end{enumerate}
\end{remark}

Having specified the notion of degree of a birational map, we give
the definition of algebraic entropy which measures the growth of the complexity of a birational map.

\begin{definition}[\cite{BellonViallet1999}]
        \label{def:algent} 
        The \emph{algebraic entropy} of a birational map $\Phi\in\Bir(\Pj^{M})$ is the 
        following limit:
        \begin{equation}
                \label{eq:algentdef}
                S_{\Phi} = \lim_{n\to\infty}\frac{1}{n}\log d_{n}^\Phi.
        \end{equation} 
\end{definition}

\begin{remark}[\cite{BellonViallet1999, GubbiottiASIDE16,
                GrammaticosHalburdRamaniViallet2009}]
        \label{rem:algent}
        The algebraic entropy has the following properties:
        \begin{itemize}
            \item by the properties of birational maps and the subadditivity
                of the logarithm, using Fekete's lemma \cite{Fekete1923},
                the algebraic entropy always exists;
            \item the algebraic entropy is non-negative and bounded from above: $0\le S_\Phi \le \log d^\Phi$;
            \item the algebraic entropy is invariant with respect to birational
                conjugation. That is, given two birational maps
                $\Phi,\Theta\in\Bir(\Pj^{M})$, we have $S_{\Phi}
                = S_{\Theta^{-1}\circ\Phi\circ\Theta}$;
            \item if $d_{n}^\Phi$ is subexponential as $n\to\infty$, e.g.       polynomial, then
                $S_{\Phi}=0$, while, if $d_{n}^\Phi \sim a^{n}$ for some $a\in\R$, 
                then $S_{\Phi} = \log a$.
        \end{itemize}
\end{remark}

Armed with the definition of algebraic entropy and the properties
described in \Cref{rem:algent}, we can define the integrability
according to the algebraic entropy.

\begin{definition}[\cite{BellonViallet1999,HietarintaBook}]
        \label{def:integrability}
        A birational map $\Phi\in\Bir(\Pj^{M})$ is
        \emph{integrable according to the algebraic entropy} if $S_{\Phi}
        = 0$. If $S_{\Phi}>0$ the map is said to be \emph{non-integrable}
        or \emph{chaotic}. Moreover, if $d_{n}^\Phi\sim n$ as $n\to \infty$ the
        map is said to be \emph{linearisable}. Finally, if $d_{n}^\Phi$ is periodic
        the map is said to be \emph{periodic}.
\end{definition}

\begin{remark}
       	\label{rem:integrability}
        Most of the known integrable maps are such that $d_{n}^{\Phi}\sim n^{2}$ as
        $n\to\infty$. From \cite{Bellon1999}, it is known that if the orbits
        of the system are elliptic curves, then the degree growth is quadratic.
        From~\cite{DillerFavre2001} it is known that in $\Pj^2$ the only sub-exponential
        beahaviour are quadratic, linear, and periodic. The first is associated
        with the preservation of an elliptic fibration, the second
        with the preservation of a rational fibration, the latter with a power of
        the map being isotopic to the identity.
        In $\Pj^M$ with $M>2$, it is possible that $d_{n}^{\Phi}\sim n^{k}$ as $n\to\infty$
        with $k>2$. For instance, in \cite{AnglesMaillarViallet2002,
                LaFortuneetal2001, GJTV_class, JoshiViallet2017} maps with cubic
        growth were presented. However, often maps with cubic growth
        arise from maps with quadratic growth through a procedure called
        \emph{inflation} (see \cite{JoshiViallet2017,GJTV_class,Viallet2019}).
\end{remark}

Let $\Phi\colon X\dashrightarrow Y$ be a birational map between
smooth projective varieties $X,Y$. Recall that the \emph{singularities}, also called \emph{indeterminacies}, of a birational
map consist of the loci where the map is not defined.
In the paper we will denote by $\Base\Phi$ the \emph{indeterminacy locus} of the map $\Phi$.

Consider the resolution of indeterminacies of the map $\Phi$ given by the Zariski closure of the graph $Z=\overline{\graph(\Phi)}\subset X\times Y$, i.e.  the following commutative diagram
\begin{equation}
\begin{tikzpicture}
    \node[right] at (0,0) {$\subset X\times Y$};
    \node at (0,0) {$Z$};
    \node at (1,-1) {$Y.$};
    \node at (-1,-1) {$X$};
    \node[above] at (0,-1) {\small$\Phi$};

    \draw[->] (-0.1,-0.2) -- (-0.9,-0.8);
    \draw[->] (0.1,-0.2) -- (0.9,-0.8);
    \draw[dashed,->] (-0.8,-1) -- (0.8,-1);
    \node[right] at (0.5,-0.5) {\tiny $\pi_Y|_{Z}$};
    \node[left] at (-0.5,-0.5) {\tiny $\pi_X|_{Z}$};
\end{tikzpicture}
\end{equation}
In this setting, one can define (see \cite{CarsteaTakenawa2019JPhysA}) a notion of pullback for birational maps
\begin{equation}
    \Phi^*\colon H^2(Y,\Z)\rightarrow H^2(X,\Z),
\end{equation}
defined as $\Phi^*=\left(\pi_X|_{Z}\right)_*\circ\left(\pi_Y|_{Z}\right)^*$, where the pullback and the pushforward on the right hand side are the usual inverse and direct images via morphisms (see \cite{CarsteaTakenawa2019JPhysA}).

\begin{remark}
    As explained in \cite{CarsteaTakenawa2019JPhysA}, it is possible to perform the actual computation via an auxiliary smooth variety $\widetilde{Z}$ instead of the possibly singular closure of the graph $Z$. This is possible thanks to the celebrated Hironaka's result on resolution of singularities (see \cite{Hironaka1964}).
\end{remark}

The following theorem, whose proof is divided in \Cref{sec:entropy}, \Cref{sec:covariants}, and \Cref{sec:invariants}, summarises the results of this paper.

\begin{theorem}
	\label{thm:main}
Let $\Cale=\Set{e_1,\ldots,e_4}$	be the set of coordinate points of $\Pj^3$. Consider a birational map of the form $\Phi=g\circ\crem_{3}\in\Bir(\Pj^{3})$ (or $\Phi=\crem_{3}\circ g$),  for some $g\in\Calc$. 
    Then, there are three possibilities depending on the cardinality of the orbit 
    $\langle g\rangle\cdot \Cale$ of the points in $\Cale$, under the action of $g$. That is:
	\begin{itemize}
		\item If $\abs{\langle g\rangle\cdot \Cale} = 8$ then the map is integrable
            in the sense of \Cref{def:integrability}, i.e. $d_n^\Phi\sim n^{2}$ as $n\to\infty$.
            Moreover, $\Phi$ possesses a covariant net of quadrics, and \emph{two}
            functionally independent invariants determined by its action on
            $\Calr \setminus(\langle g\rangle\cdot \Cale)$.
		\item If $\abs{\langle g\rangle\cdot \Cale} = 4$  then the map is periodic
            in the sense of \Cref{def:integrability}, i.e. $d_{n}^{\Phi}\in\Set{1,3}$.
            Moreover, $\Phi$ possesses a covariant five-dimensional linear system of 
            quad\-rics, and \emph{three} functionally independent invariants.
		\item If $\abs{\langle g\rangle\cdot \Cale} = 12$
            then the map is non-integrable in the sense of \Cref{def:integrability}, i.e. $d_{n}^{\Phi}\sim \varphi^{2n}$, where $\varphi$ is the golden ratio.
            Moreover, $\Phi$ exhibits late confinement, and it possess 
            a covariant pencil of desmic surfaces, and \emph{one} invariant.
	\end{itemize}
\end{theorem}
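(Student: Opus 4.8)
The plan is to handle the three cases through a single mechanism — the induced action on cohomology — and then to produce the covariants and invariants case by case. First I would reduce the two families $g\circ\crem_3$ and $\crem_3\circ g$ to one: since $\crem_3$ is a birational involution, $\crem_3\circ g=\crem_3\,(g\circ\crem_3)\,\crem_3^{-1}$ is a birational conjugate of $g\circ\crem_3$, so by the conjugation invariance of the algebraic entropy recalled in \Cref{rem:algent} it suffices to analyse $\Phi=g\circ\crem_3$. Next I would record the indeterminacy structure of $\crem_3$: its base locus consists of the six edges of the coordinate tetrahedron, it contracts each coordinate plane $\{x_i=0\}$ to the opposite coordinate point, and its fixed points together with the $e_i$ make up the twelve special points of $\Calr$ on which $\Calc$ acts. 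The trichotomy is then governed by the length of the $\langle g\rangle$-orbit of $\Cale=\{e_1,\dots,e_4\}$ inside $\Calr$.

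For the degree growth (the content of \Cref{sec:entropy}) I would resolve the indeterminacies of $\Phi$ and follow the orbits of its exceptional divisors. When $\langle g\rangle\cdot\Cale$ is finite (sizes $4$ and $8$), the relevant base points lie in a finite $\Phi$-orbit; blowing these up yields a smooth rational threefold $X$ on which $\Phi$ lifts to a pseudo-automorphism, and the degree sequence is read off from the iterates of the linear map $\Phi^{*}\colon H^2(X,\Z)\to H^2(X,\Z)$ applied to the pullback of the hyperplane class $[H]$ — the identification of $d_n$ with this intersection number being Bezout, as in the introduction. In the size-$8$ case $\Phi^{*}$ is quasi-unipotent with a single Jordan block of size three, forcing $d_n\sim n^{2}$; in the size-$4$ case $\Phi^{*}$ has finite order, so $\Phi$ is periodic with $d_n\in\{1,3\}$. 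In the size-$12$ case the exceptional orbits do not close up immediately — this is the \emph{late confinement} — and after the appropriate blow-ups the characteristic polynomial of $\Phi^{*}$ acquires the Salem factor $\lambda^{2}-3\lambda+1$, whose dominant root is $\varphi^{2}$, giving $d_n\sim\varphi^{2n}$ and hence positive entropy.

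For the covariants and invariants (\Cref{sec:covariants} and \Cref{sec:invariants}) I would start from the fact that $\crem_3$ preserves the six-dimensional space of quadrics vanishing at the four coordinate points, acting on it by the involution exchanging complementary edges $\{i,j\}\leftrightarrow\{k,l\}$. Composing with the permutation that $g$ induces on the coordinate points shows that $\Phi$ preserves the subsystem of quadrics, respectively quartics, vanishing along the full orbit $\langle g\rangle\cdot\Cale$. Imposing vanishing at $4$, $8$, or $12$ of the special points then cuts this down to the five-dimensional linear system of quadrics (size $4$), to the covariant net of quadrics coming from the Cayley--Bacharach relations on the eight orbit points (size $8$), and to the covariant pencil of desmic quartics on the twelve-point desmic configuration (size $12$), exactly as stated. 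The induced projective action of $\Phi$ on each covariant system supplies the invariants: ratios of a basis of the system, pinned down by the action of $\Phi$ on the remaining points $\Calr\setminus(\langle g\rangle\cdot\Cale)$, give $2$, $3$, and $1$ rational first integrals respectively, which I would verify are genuinely $\Phi$-invariant and, via a Jacobian computation, functionally independent. Their common level sets are elliptic curves, finite sets, and desmic surfaces, matching the integrable, periodic, and non-integrable behaviour.

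The main obstacle is the size-$12$ case. There the orbit of the exceptional divisors is not finite in the naive sense, so constructing a smooth model on which $\Phi$ becomes a pseudo-automorphism — and correctly accounting for the blow-ups forced by late confinement — is delicate; in particular, proving that $\Phi^{*}$ stabilises after finitely many blow-ups and that its spectral radius is exactly $\varphi^{2}$, rather than merely bounded by it, is the crux. I expect this to require an explicit determination of the orbit data of $\Phi$ and a verification that the resulting linear map on the Picard lattice carries the claimed Salem characteristic polynomial, from which both the golden-ratio growth and the existence of a single covariant desmic pencil follow.
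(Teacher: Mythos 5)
Your overall strategy coincides with the paper's: reduce $\crem_3\circ g$ to $g\circ\crem_3$ by conjugation, build a space of initial values by blowing up the orbit $\langle g\rangle\cdot\Cale$, read off $d_n^\Phi$ from the iterates of the induced map on $H^2$, and obtain the invariants from covariant linear systems cut out by vanishing conditions at the special points. However, your treatment of the size-$12$ case contains a genuine misunderstanding that you yourself flag as an unresolved obstacle. You write that there ``the orbit of the exceptional divisors is not finite in the naive sense'' and that proving stabilisation after finitely many blow-ups and pinning the spectral radius to exactly $\varphi^2$ is delicate. In fact the orbit is finite and the construction is no harder than in the other two cases: each contracted coordinate plane is sent to a point of $\Calp$, then to a point of $\Calq$, then to a point of $\Cale$, which is blown back up to a plane, so the singular orbits close after three steps (this is what \emph{late} confinement means --- not that confinement fails). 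Blowing up the twelve points of $\Calr$ once already yields an algebraically stable lift, the action on $H^2$ is the explicit linear map \eqref{eq:actionB}, and the degree sequence solves the linear recurrence \eqref{eq:dnen}, whose transfer matrix has $\lambda^2-3\lambda+1$ as a factor of its characteristic polynomial; solving it in closed form gives $d_n^\Phi\sim\varphi^{2n}$ exactly, with no extra blow-ups and no issue of ``bounded by'' versus ``equal to'' (\Cref{prop:caseBgrowth}). Positive entropy is perfectly compatible with algebraic stability here, so the difficulty you anticipate does not arise.

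The second gap is in the invariant counts. The covariant systems have projective dimensions $2$, $5$, and $1$, so ``ratios of a basis'' would give $2$, $5$, and $1$ candidate integrals, not $2$, $3$, $1$; and more importantly, covariance of the linear system does not mean its individual members are fixed --- $\Phi$ in general \emph{permutes} the fibres. The paper must therefore locate pointwise-invariant sub-pencils inside each covariant system (\Cref{fixlines}), and when the permutation of the distinguished quadrics $S_{ij}$ has nontrivial cycles the genuine invariants are nonlinear combinations over those cycles (squares, products of quartics or sextics), with anti-invariants and $k$-invariants appearing along the way; this is the content of the case-by-case analysis in \Cref{tab:ivariantcaseA,tab:ivariantcaseB}, and the reduction to exactly three functionally independent invariants in case \textit{(B)} is checked by explicit computation rather than by dimension count. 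Your idea that the case-\textit{(A)} invariants are governed by the action on $\Calr\setminus(\langle g\rangle\cdot\Cale)$ is correct and is exactly the paper's mechanism, but as stated your argument does not yet produce the claimed numbers.
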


In \Cref{thm:main}, by \emph{covariant linear system} $\Sigma$ we mean that there exists a divisor $D\in\Div(\Pj^3)$ such that the correspondence
\begin{equation}
        \label{eautocov}
        \begin{tikzcd}[row sep=tiny]
                \Sigma\arrow{r}{} & \Sigma \\
                W \arrow[mapsto]{r} & {(\Phi^{-1})}^* W-D,
        \end{tikzcd}
\end{equation}
is a well defined group automorphism (see \Cref{def:convariance}). While, by \emph{invariant} we mean a meromorphic function $R:\Pj^M\dashrightarrow\C$ such that $R\equiv R\circ\Phi$. Finally, we say
that some meromorphic functions $R_1,\ldots,R_k$ are \emph{functionally independent} if
at all points of $\Pj^M$ the Jacobian matrix of the map $\Pj^M\dashrightarrow \C^k$, defined by $R_1,\ldots,R_k$, has maximal rank $k$.

In principle, the definition of algebraic entropy in equation
\eqref{eq:algentdef} requires one to compute all the iterates
of a birational map $\Phi$ and to take the limit as $n\to\infty$.
For practical purposes this is clearly impossible. So, during the years,
several heuristics methods to compute the entropy has been proposed, for
instance using the concept of \emph{generating function} \cite{Lando2003}
(see also \cite{GubbiottiASIDE16,GrammaticosHalburdRamaniViallet2009}).
On the other hand, several methods to compute the algebraic
entropy exactly has been proposed. Notably, most of the approaches
use the algebro-geometric structure of the projective spaces
\cite{DillerFavre2001,BedfordKim2004,BedfordKim2006,Takenawa2001JPhyA,Viallet2015},
with some notable exceptions \cite{Hasselblatt2007}. In this sense,
the computation of the algebraic entropy is more accessible if the
singularity are confined (see \cite{Grammaticosetal1991,Viallet2015}). For
instance, in this paper, we compute the algebraic entropy of integrable
and non-integrable maps both confining singularities. 

In the present paper, to compute the exact value of the algebraic entropy
of the maps of the form $g\circ\crem_3$, for $g$ in the Cremona-cubes
group, we take the viewpoint of the construction of the \emph{space
of initial values} of the given map $\Phi\in\Bir(\Pj^{M})$. This
concept is the discrete analogue of Okamoto's description
\cite{Okamoto1979,Okamoto1977} of the continuous Painlev\'e equations
\cite{InceBook}, and it was conceived in \cite{Sakai2001}. To introduce
this concept we need to introduce the following definition.

\begin{definition}[\cite{CarsteaTakenawa2019JPhysA}]
        \label{def:anstable}
        A rational map $\Phi$ from a smooth projective variety $X$  to itself is called \emph{algebraically stable} if $(\Phi^*)^n = (\Phi^n)^*$ holds.
\end{definition}

\begin{remark}
        \label{rem:singularities}
        The concept of algebraic stability is related to the one of
        singularity confinement. Indeed, heuristically algebraic stability means that the
        singularities of the map behave in a controlled way: they either form finite 
        or periodic patterns. Specifying to the case of
        interest, i.e. maps in $\Pj^{M}$, a singularity pattern will be of the
        following form:
        \begin{equation}
                \label{eq:singpatt}
                D \xrightarrow{\Phi} \gamma_{1} \xrightarrow{\Phi}
                \gamma_{2}\xrightarrow{\Phi} \cdots \xrightarrow{\Phi}
                \gamma_{K} \xrightarrow{\Phi} D',
        \end{equation}    
        where $D$, $D'$ are divisors and
        $\gamma_{i}$ are varieties of codimension greater than one. Finite concatenations of
        patterns of the form \eqref{eq:singpatt} can repeat periodically as
        long as the number of centres $\gamma_{i}$ stays finite (this
        last requirement can be false for linearisable equations
        \cite{Ablowitz_et_al2000,Takenawaatel2003,HayHoweseNakazonoShi2015}).
        Following \cite{BellonViallet1999,Viallet2015}, we can compute which
        are the divisors contracted by the map $\Phi$ and its inverse. Precisely, calling $\Psi\in\Bir(\Pj^{M})$
        the inverse of $\Phi$ the following relations hold:
        \begin{equation}
                \label{eq:kappadef}
                \Psi \circ \Phi\equiv \kappa \cdot \Id_{\Pj^M},
                \quad
                \Phi \circ \Psi\equiv \lambda \cdot\Id_{\Pj^M},
                \quad
                \kappa,\lambda\in\C\left[ x_{1},\cdots,x_{M+1} \right].
        \end{equation}
        The polynomials $\kappa$ and $\lambda$ admit a possibly trivial 
        factorisation  of the form:
        \begin{equation}
                \label{eq:kappafact}
                \kappa = \prod_{i=1}^{K_{\kappa}} \kappa_{i}^{d_{\kappa,i}},
                \quad
                \lambda = \prod_{i=1}^{K_{\lambda}} {\lambda}_{i}^{d_{\lambda,i}},
        \end{equation}
        where $\kappa_i \neq \kappa_j$ and $\lambda_i \neq \lambda_j$ for $i\neq j$.
        The only (prime) divisors that can be contracted to subvarieties of higher
        codimension by $\Phi$ are then the varieties:
        \begin{equation}
                \label{eq:singvarphi}
                \mathrm{K}_i =  \{\kappa_{i}=0\},
                \quad\mbox{for }i=1,\ldots,K_\kappa,
        \end{equation}
        while $\Psi$ can only contract the varieties:
        \begin{equation}
                \label{eq:singvarpsi}
                \Lambda_j = \{\lambda_{j}=0\}\quad\mbox{for }j=1,\ldots,K_\lambda.
        \end{equation}
        In \Cref{fig:singularities} we present a possible blow-up blow-down
        sequence in $\Pj^{3}$: the surface $D$ is mapped to a curve and then to a point, 
        but after four steps the singularity is confined and a new surface $D'$ is
        found. This is a graphical representation of equation \eqref{eq:singpatt}.
\end{remark}

\begin{figure}[htb]
        \centering
        \begin{tikzpicture}[scale=0.4]
                \draw[thick] (1,0)--(5,4)--(5,12)--(1,8)--(1,0);
                \node[below right] at (3,2) {$D$};
                
                \draw[->] (5.8,6) -- (8.2,6);
                \draw (9,0).. controls (7,5) and (11,9) ..(9.1,12);
                \draw[->] (9.2,6) -- (12-0.5,6);
                
                \node[below] at (9,0) {$\gamma$};
                \filldraw (12,6) circle (3pt);
                \node[above] at (12,6) {$p$};
                \draw[->] (12.4,6) -- (14.6,6);
                \filldraw (15,6) circle (3pt);
                \node[above] at (15,6) {$p'$};
                
                \draw[->,dashed] (15.5,6)--(18-0.6,6);
                \draw (18,0).. controls (16,5) and (20,9) ..(18,12);
                \node[below] at (18,0) {$\gamma'$};
                
                \draw[->, dashed] (18.3,6) -- (20.7,6);
                \draw[thick] (21.5,0)--(25.5,4)--(25.5,12)--(21.5,8)--(21.5,0);
                \node[below right] at (23.5,2) {$D'$};
        \end{tikzpicture}
        \caption{A possible blow-down blow-up sequence in $\Pj^{3}$.}
        \label{fig:singularities}
\end{figure}

The following result allows us to characterise algebraically stable maps
from the structure of their indeterminacy locus as described in \Cref{rem:singularities}.

\begin{proposition}[\cite{Bayraktar2013,CarsteaTakenawa2019JPhysA,BedfordKim2004,BedfordKim2008}]
        \label{prop:anstable}
        Let $X$ be a smooth projective variety and let
        $\Phi\in\Bir( X)$ be a birational map with indeterminacy locus
        $\Base\Phi$. Then, the map $\Phi$ is algebraically stable if
        and only if it does not exist a positive integer $k$ and a
        divisor $E$ on $X$  such that $\Phi (E \smallsetminus \Base\Phi )
        \subset \Base( \Phi^k)$.
\end{proposition}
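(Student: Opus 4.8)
The plan is to reduce the statement to the single fact governing how the pullback fails to be functorial under composition, and then to run an induction on the number of iterates. Recall that $\Phi^{*}=(\pi_{X}|_{Z})_{*}\circ(\pi_{Y}|_{Z})^{*}$ is computed on the resolved graph. The engine I will use is the following one-step comparison for dominant rational maps $f,g\colon X\dashrightarrow X$ of a smooth projective variety: working on a common smooth model dominating the graphs of $f$, $g$ and $g\circ f$ (legitimate by Hironaka, as already invoked above), one has the submultiplicativity $(g\circ f)^{*}\le f^{*}\circ g^{*}$ in $H^{2}(X,\Z)$, the difference $f^{*}\circ g^{*}-(g\circ f)^{*}$ being an effective class supported on the prime divisors $E\subset X$ that $f$ contracts into $\Base g$. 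In particular equality $(g\circ f)^{*}=f^{*}\circ g^{*}$ holds if and only if no prime divisor $E$ satisfies $f(E\setminus\Base f)\subset\Base g$. This one-step statement is exactly what the cited references provide, and I will take it as given.

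Granting the engine, the equivalence follows by induction. For the backward implication, assume no positive integer $k$ and divisor $E$ satisfy $\Phi(E\setminus\Base\Phi)\subset\Base(\Phi^{k})$, and prove $(\Phi^{n})^{*}=(\Phi^{*})^{n}$ by induction on $n$, the cases $n\le 1$ being immediate. Writing $\Phi^{n}=\Phi^{n-1}\circ\Phi$ and applying the engine with $f=\Phi$, $g=\Phi^{n-1}$, the hypothesis with $k=n-1$ removes the obstruction, so $(\Phi^{n})^{*}=\Phi^{*}\circ(\Phi^{n-1})^{*}$; combined with the inductive identity this gives $(\Phi^{n})^{*}=(\Phi^{*})^{n}$, i.e.\ algebraic stability. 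For the forward implication I argue by contraposition. Suppose some divisor $E$ and some positive integer realise the containment, and let $k_{0}$ be the least such integer. Minimality means the obstruction is absent at every earlier step, so the backward induction already yields $(\Phi^{k_{0}})^{*}=(\Phi^{*})^{k_{0}}$; on the other hand the equality clause of the engine fails for $f=\Phi$, $g=\Phi^{k_{0}}$, whence $(\Phi^{k_{0}+1})^{*}\ne\Phi^{*}\circ(\Phi^{k_{0}})^{*}=(\Phi^{*})^{k_{0}+1}$, so $\Phi$ is not algebraically stable.

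Two bookkeeping points deserve attention in this reduction. First, I will record that the two descriptions of the obstruction agree: since $\Base(\Phi^{k})$ has codimension at least two on the smooth variety $X$ while $E$ is a prime divisor, and $E\cap\Base\Phi$ is nowhere dense in $E$ (as $\Base\Phi$ also has codimension at least two), the containment $\Phi(E\setminus\Base\Phi)\subset\Base(\Phi^{k})$ forces $\dim\overline{\Phi(E)}\le\dim X-2$, i.e.\ $\Phi$ genuinely contracts $E$; hence the ``contracted divisor'' language of the engine coincides with the ``image inside the base locus'' language of the statement. Second, the contraposition argument uses the effectivity and the precise support of the discrepancy $f^{*}g^{*}-(g\circ f)^{*}$ to guarantee that the failure of equality is strict and not an accidental cancellation.

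The main obstacle is the engine itself. I expect to establish it by choosing a smooth $W$ with a birational morphism $\rho\colon W\to X$ resolving the indeterminacies of $\Phi$ and of $\Phi^{k}$ simultaneously, expressing $f^{*}g^{*}D$ and $(g\circ f)^{*}D$ as push-pull of a class $D$ through $W$, and comparing them by the projection formula. The discrepancy is then a combination of $\rho$-exceptional divisors whose images in $X$ are exactly the prime divisors that $f$ contracts into $\Base g$; proving that these contributions are effective and nonzero precisely under the stated geometric condition is the delicate part, and it is carried out in the references \cite{Bayraktar2013,CarsteaTakenawa2019JPhysA,BedfordKim2004,BedfordKim2008}, so I will cite rather than reproduce the current- or intersection-theoretic computation.
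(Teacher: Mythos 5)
The paper does not prove this proposition at all: it is stated as a quoted result, with the proof deferred entirely to the cited references, so there is no internal argument to compare yours against. Your reconstruction is the standard one from that literature and its logical skeleton is sound: the one-step comparison (equality $(g\circ f)^{*}=f^{*}\circ g^{*}$ fails exactly when some prime divisor is contracted by $f$ into $\Base g$, with an effective, nonzero discrepancy), followed by induction for the backward implication and a minimal-counterexample contrapositive for the forward one. Your two bookkeeping remarks are also correct and worth making explicit: the containment $\Phi(E\smallsetminus\Base\Phi)\subset\Base(\Phi^{k})$ does force $E$ to be contracted because $\Base(\Phi^{k})$ has codimension at least two, and the strictness of the failure at step $k_{0}+1$ does rest on effectivity of the discrepancy rather than on mere non-equality of formulas. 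The honest caveat — which you acknowledge — is that all of the substantive content sits inside the ``engine'' (the push--pull comparison on a common smooth model and the identification of the support of the discrepancy), and you cite rather than prove it; since the paper itself treats the whole proposition this way, that is an acceptable division of labour, but as a standalone proof your text is an architecture around an imported black box rather than a complete argument.
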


Then, we define.

\begin{definition}
        \label{def:spaceofinitialcond}
        A \emph{space of initial values} of a map $\Phi\in\Bir(\Pj^{M})$
        is the datum of a birational projective morphism $\varepsilon\colon B\to\Pj^{M}$ such that the variety $B$ is smooth and
        the lifted (birational) maps $\widetilde{\Phi},\widetilde{\Phi}^{-1}\in\Bir (B)$ are algebraically stable. Sometimes we will also call space of initial values simply the variety $B$.
\end{definition}

\begin{remark}
        In what follows, using the canonical isomorphism $\Bir(\Pj^3)\cong\Bir(B)$, with abuse of notation, we will denote by $\Phi$ also the map $\widetilde{\Phi}$, specifying, at each instance, if we are working with the projective space or with the variety $B$.  
\end{remark}

Suppose now that the (prime) subvarieties $\gamma_i$, for $i=1,\ldots,K$, of codimension  greater than one encountered in the singularity pattern \eqref{eq:singpatt} of some map $\Phi$ are disjoint, i.e. $\gamma_i\cap\gamma_j=\emptyset$ for $i\not=j$ irreducible and smooth (we will just blow-up reduced points). The general case is more intricate and beyond our purpose. 
As a consequence of \Cref{rem:singularities} and of the properties of the blowup (see \cite[Proposition IV-22]{GEOFSCHEME}), we have that
\begin{equation}
        \label{eq:blowups}
        B = \Bl_{\underset{i=1}{\overset{K}{\cup}}\gamma_{i}}(\Pj^{M})
\end{equation}
is a space of initial values for $\Phi$. Denoting by $F_{i}$, for $i=1,\ldots,K$, the exceptional divisors of $\varepsilon$, we attach to $B$ its second cohomology group (see \cite[Section 4.6.2]{GRIHARR}):
\begin{equation}
        \label{eq:H2B}
        H^{2}(B,\Z) =
        \langle \varepsilon^{*}H , F_{1},\dots, F_{K}\rangle_{\Z}.
\end{equation}
Then, the action of $(\Phi^{-1})^{*}$ on $H^{2}(B,\Z)$ is linear and
the coefficient of the pullback of $\varepsilon^{*}H$ via $\Phi$ agrees with the \emph{degree of $\Phi$} in the sense of equation \eqref{eq:dn}.
So, following \cite{Takenawa2001JPhyA,BedfordKim2004}, from the algebraic stability 
condition we get that:
\begin{equation}
        \label{eq:dncoeff}
        d_{n}^\Phi  = \coeff\left( ((\Phi^{-1})^{*})^{n}\varepsilon^* H, \varepsilon^{*}H \right) = \coeff\left( (\Phi^{*})^{-n}\varepsilon^* H, \varepsilon^{*}H \right),
\end{equation}
that is we converted the problem of finding a closed form
expression for $d_{n}^\Phi$ to a problem in linear algebra over
the $\Z$-module $H^{2}(B,\Z)$.

The plan of the paper is the following. In \textsc{\Cref{sec:motivations}}
we present the motivations to consider the Cremona-cubes group,
taken from the recent literature on the KHK discretisation.  In
\textsc{\Cref{sec:cremona}} we recall some of the needed properties of
the standard Cremona transformations and we describe their resolutions of
indeterminacies in dimension 2 and 3. We will also remark that, in dimension
3, the associated variety is singular at twelve conifold points. In
\textsc{\Cref{sec:cubegroup}} we introduce rigorously the Cremona-cubes
group, a subgroup of $\Pj\Gl(4,\C)$.  In \textsc{\Cref{sec:entropy}}
we prove the growth properties described in \Cref{thm:main}.  Next, in
\textsc{\Cref{sec:covariants}}, we discuss the existence of covariant
linear systems of quadrics and quartics as stated in \Cref{thm:main}.
So, in the successive \textsc{\Cref{sec:invariants}}, we construct the
invariants via geometrical arguments ending the proof of \Cref{thm:main}. In particular, we
find results matching with those of \textsc{\Cref{sec:entropy}} because
we find two invariants for integrable maps, three for periodic maps, and
only one for non-integrable ones.  Finally, in \textsc{\Cref{sec:concl}}
we present some conclusion and some outlook for future works.

\section{The KHK discretisation of the Euler top}
\label{sec:motivations}

It is a well known fact that most of the problems in the theory
of dynamical systems cannot be solved in closed form. For instance, in \cite[\S 5,
        pag. 22]{Arnold1997},
V.~I. Arnol'd wrote: \begin{quote}
        \emph{``Analyzing a general potential system with two degrees of
                freedom is beyond the capability of modern science.''} 
\end{quote} This led many scientists to develop and study methods
to produce systems that could be tackled numerically
\cite{NumericalRecipes3rd}. In the case of ordinary differential
equations, this amounts to produce \emph{discretisations} which can be
solved iteratively. The problem that arises with the discretisation
approach is then to preserve the known properties of the continuous
systems. For instance, standard Hamiltonian systems are known to be
\emph{conservative}, meaning that the orbits of a Hamiltonian system
cannot spiral into points (or formally, stable equilibrium points
cannot be asymptotically stable). On the other hand, it is known
that this property is not preserved by all numerical methods, and for
instance a symplectic integrator cannot preserve exactly the energy
and vice versa an energy-preserving integrator cannot be symplectic 
\cite{ZhongMarsden1988}. This considerations led to the
introduction of a branch of numerical analysis called \emph{geometric
integration}, whose aim is to build discretisations preserving
as much as possible the properties of their continuous counterparts
\cite{BuddIserles1999,BuddPiggot2003,KrantzParks2008}.

In a series of unpublished lecture notes
(see \cite{Kahan1993}), W. Kahan devised a method to obtain good numerical approximations in the
sense outlined above: the orbits of some conservative systems did not seem
to be affected by the physically and mathematically incorrect spiraling
behaviour \cite{KahanLi1997}. Kahan's method was
rediscovered independently by Hirota and Kimura, who used it to produce
integrable discretisations of the Euler top \cite{HirotaKimura2000}
and the Lagrange top \cite{KimuraHirota2000}, followed by Suris
and his collaborators who produced many more integrable examples in
\cite{PetreraPfadlerSuris2009,PetreraSuris2010,PetreraPfadlerSuris2011}.
Before discussing our case of interest, that is the Euler top, we give
a brief account of the method.

\begin{definition}
        \label{def:khk}
        Assume we are given an \emph{$M$-dimensional system of first order differential equations}
        (also called a \emph{vector field}):
        \begin{equation}
                \label{eq:firstord}
                \vec{\dot{x}}=\vec{f}\left( \vec{x} \right),
                \quad
                \vec{x}\colon\R^{M}\to\R,
                \quad
                \vec{f}\colon\R^{M}\to\R^{M}.
        \end{equation}
        Then, its \emph{Kahan--Hirota--Kimura discretisation (KHK)} is:
        \begin{equation}
                \label{eq:kahan}
                \frac{\vec{x'}-\vec{x}}{h} = 
                2\vec{f}\left( \frac{\vec{x'}+\vec{x}}{2} \right)
                -\frac{\vec{f}\left( \vec{x'} \right)+\vec{f}\left( \vec{x}\right)}{2},
        \end{equation}
        where $\vec{x} = \vec{x}\left( n h \right)$,
        $\vec{x'}=\vec{x}\left( (n+1)h \right)$, and $h>0$ is an infinitesimal
        parameter.    
\end{definition}

\begin{remark}
        \label{rem:kahanproperties}
        In this remark we resume the most important known facts about the
        KHK discretisation.
        \begin{itemize}
            \item If the function $\vec{f}$ is \emph{quadratic}, then
                the associated map
                \begin{equation}
                        \label{eq:kahan2}
                        \boldsymbol{\Phi}_{h}\left( \vec{x} \right)=\vec{x'}= 
                        \vec{x} +
                        h \left( I_{M}-\frac{h}{2} \grad_{\vec{x}}\vec{f}\left( \vec{x} \right) \right)^{-1}\vec{f}\left(\vec{x}\right),
                \end{equation}
                 where $\grad_{\vec{x}}\vec{f}\left( \vec{x} \right)$ is the Jacobian of
                the function $\vec{f}$, is birational (see
                \cite{PetreraPfadlerSuris2011,CelledoniMcLachlanOwrenQuispel2013}).
                Its inverse is obtained through the substitution $h\mapsto-h$,
                i.e. $\boldsymbol{\Phi}^{-1}_{h}\left( \vec{x} \right)= 
                \boldsymbol{\Phi}_{-h}\left( \vec{x} \right)$.
                This considerations carry over in $\Pj^M$ considering first the complexification $\C^M$ of $\R^M$ and then its compactification to $\Pj^M$ with hyperplane at
                infinity $\Set{x_{M+1}=0}$. We denote the corresponding
                map by $\Phi_{h} \in\Bir(\Pj^{M})$ to underline the dependence
                on $h>0$.
            \item When applied to quadratic vector fields the KHK method is the restriction 
                of a Runge-Kutta method \cite{NumericalRecipes3rd}, so it is covariant with
                respect to affine transformations \cite{CelledoniMcLachlanOwrenQuispel2013}.
            \item Suppose that the vector field is Hamiltonian. That is, there exist
                a function $H\colon \R^{M}\to\R$ and a constant skew-symmetric
                matrix $J\in \Mat_{M,M}\left( \R \right)$ such that:
                \begin{equation}
                        \label{eq:firstordham}
                        \vec{\dot{x}}=J\grad H\left( \vec{x} \right).
                \end{equation}
               If $\deg H = 3$,  then the associated KHK discretisation admits
                an invariant $\tilde{H}_{h}$, such that $\lim_{h\to0^{+}} \tilde{H}_{h}=H$
                and a preserved measure which is a $h$-deformation of the standard
                Euclidean measure \cite{CelledoniMcLachlanOwrenQuispel2013, CelledoniMcLachlanOwrenQuispel2014}.
        \end{itemize}
\end{remark}

The Euler top is the following system of three first order quadratic
equations in the variables $(x_1,x_2,x_3)\in\R^3$:
\begin{equation}
        \label{eq:euler}
        \dot{x_1} = a_1 x_2 x_3,
        \quad 
        \dot{x_2} = a_2 x_1 x_3,
        \quad 
        \dot{x_3} = a_3 x_1x_2.
\end{equation}
This is a well known integrable system (see \cite[\S 29]{Arnold1997})
whose solution was given by Euler himself in terms of elliptic
functions. In fact, the Euler top belongs to a wider family
of continuous integrable systems known as the Manakov systems
\cite{Mishchenko1970,Manakov1976}.  Following \eqref{eq:kahan}, the KHK
discretisation of the Euler top is:
\begin{equation}
        \label{eq:eulerkhk}
        \frac{x_1'-x_1}{h} = \frac{a_1}{2} (x_2'x_3+x_2x_3'),
        \quad 
        \frac{x_2'-x_2}{h} = \frac{a_2}{2} (x_1'x_3+x_1x_3'),
        \quad 
        \frac{x_3'-x_3}{h} = \frac{a_3}{2} (x_1'x_2+x_1x_2').
\end{equation}
Explicitly, from \eqref{eq:kahan2}, after introducing homogeneous
coordinates $[x_1:x_2:x_3:x_4]$ on $\Pj^3$ we have the following
map of projective spaces:
\begin{equation}
        \label{eq:eulerkhk2}
        \begin{tikzcd}[row sep=tiny]
                \Pj^3 \arrow[dashed]{r}{\Phi_h} & \Pj^3 \\
                {[}x_1:x_2:x_3:x_4{]} \arrow[mapsto]{r} & \left[ x_1':x_2':x_3':x_4'\right],
        \end{tikzcd}
\end{equation}
where:
\begin{subequations}
        \label{eq:eulerxp}
        \begin{align}
                x_1' & =- \left( {  a_1} {  a_2} x_3^{2}+{  a_1} {  a_3} x_1^{2}-{a_2} {  a_3} x_1^{2} \right) {h}^{2}x_1-4 {  a_1} hx_2x_3x_4-4 x_1x_4^{2},
                \\
                x_2' & =- \left( {  a_1} {  a_2} x_3^{2}-{  a_1} {  a_3} x_2^{2}+{  a_2} {  a_3} x_1^{2} \right) {h}^{2}x_2-4 {  a_2} hx_1x_3x_4-4 x_2x_4^{2},
                \\
                x_3' & =  \left( {  a_1} {  a_2} x_3^{2}-{  a_1} {  a_3} x_2^{2}-{
                        a_2} {  a_3} x_1^{2} \right) {h}^{2} x_3 -4 {  a_3} hx_1x_2x_4-4 x_3x_4^{2},
                \\
                x_4' & = {  a_1} {  a_2} {  a_3} {h}^{3}x_1x_2x_3
                + \left( {  a_1} {  a_2} x_3^{2}+{  a_1} {  a_3} x_2^{2}+{  a_2} {  a_3} x_1^{2}\right) 
                {h}^{2}x_4-4 x_4^{3}.
        \end{align}
\end{subequations}

From \cite{HirotaKimura2000}, it is known that the above system is
integrable, with its Hamiltonian formulation given in \cite{PetreraSuris2010}.
Another remarkable property is the
existence of a Lax pair\footnote{Sometimes in the Russian literature
called a $L-A$ pair.} \cite{Kimura2017JPhysA,Kimura2017Lax,Sogo2017},
the only known case for a KHK discretisation along with the discrete
Nahm system \cite{GubNahm}. More recently, in \cite{Alonso_et_al2022},
the system \eqref{eq:eulerkhk} was derived using a three-dimensional
analogue of the QRT construction \cite{QRT1988,QRT1989}, that is as
action of involutions on two pencils of quadrics. In the same paper
\cite[Prop. 7.2]{Alonso_et_al2022} it is discussed the reduction of the
system to a three-dimensional standard Cremona transformation composed
with \emph{two} projectivities (see \Cref{lem:eulerdecomposition}). In the rest of this Section, we will interpret this statement from a different viewpoint  based on singularity confinement.
Moreover, we will explain why this naturally leads
to the definition of the Cremona-cubes group (see \Cref{def:cubegroup}).

From the heuristic point of view, if we compute the sequence of degrees
of the iterations of the map in \eqref{eq:eulerkhk2}, we obtain:
\begin{equation}
        \label{eq:dgrowth}
        1, 3, 9, 19, 33, 51, 73, 99, 129, 163, 201, 243, 289\dots.
\end{equation}
The following generating function fits the values in  \eqref{eq:dgrowth}:
\begin{equation}
        \label{eq:gfq}
        g(s) =\frac{3s^2 + 1}{(1 - s)^3}.
\end{equation}
Applying the inverse $\mathcal{Z}$-transform
(see \cite{GubbiottiASIDE16}) \eqref{eq:gfq}, we get $d_n^{\Phi_h}= 2 n^2 +1$. Furthermore, by
examining the sequence \eqref{eq:dgrowth}, we see that the deviation from
the standard growth $d_n^{(s)}=3^n$ happens at the third iterate. After having explored the singularity pattern (see \Cref{fig:singularitieseuler}), we
will see that this is not an accident, but it has a deep meaning.

Recall that $\Phi^{-1}_h = \Phi_{-h}$ (see \Cref{rem:kahanproperties}). Then,  following the idea of singularity confinement, we compute the polynomials
$\kappa$ and $\lambda$ from \eqref{eq:kappadef} for \eqref{eq:kahan2}:
\begin{equation}
        \label{eq:kappalambdaeuler}
        \kappa = \prod_{i=1}^{4} \kappa_{i}^{2},
        \quad
        \lambda = \prod_{i=1}^{4} \lambda_{i}^{2},
\end{equation}
with:
\begin{subequations}
        \label{eq:kappai}
        \begin{align}
                \label{eq:kappa_1}
                \kappa_{1} &= 
                \alpha_1 \alpha_2 h x_3-\alpha_1 \alpha_3 h x_2-\alpha_2 \alpha_3 h x_1-2 x_4,
                \\
                \label{eq:kappa_2}
                \kappa_{2}  &= 
                \alpha_1 \alpha_2 h x_3+\alpha_1 \alpha_3 h x_2-\alpha_2 \alpha_3 h x_1+2 x_4,
                \\
                \label{eq:kappa_3}
                \kappa_{3} &= 
                \alpha_1 \alpha_2 h x_3-\alpha_1 \alpha_3 h x_2+\alpha_2 \alpha_3 h x_1+2 x_4,
                \\
                \label{eq:kappa_4}
                \kappa_{4} &= 
                \alpha_1 \alpha_2 h x_3+\alpha_1 \alpha_3 h x_2+\alpha_2 \alpha_3 h x_1-2 x_4,
        \end{align}
\end{subequations}
and
\begin{subequations}
        \label{eq:lambdai}
        \begin{align}
                \lambda_{1} &= 
                \alpha_1 \alpha_2 h x_3 -\alpha_1 \alpha_3 h x_2 -\alpha_2 \alpha_3 h x_1+2 x_4,
                \\
                \lambda_{2} &= 
                \alpha_1 \alpha_2 h x_3+\alpha_1 \alpha_3 h x_2-\alpha_2 \alpha_3 h x_1-2 x_4,
                \\
                \lambda_{3} &= 
                \alpha_1 \alpha_2 h x_3-\alpha_1 \alpha_3 h x_2+\alpha_2 \alpha_3 h x_1-2 x_4,
                \\
                \lambda_{4} &= 
                \alpha_1 \alpha_2 h x_3+\alpha_1 \alpha_3 h x_2+\alpha_2 \alpha_3 h x_1+2 x_4,
        \end{align}
\end{subequations}
for some choice of square root $\alpha_j$ of $a_{j}$, for $j=1,2,3$.

Let us consider the varieties
\begin{equation}
        \label{piani}
        \mathrm{K}_i=\left\{ \kappa_{i}=0 \right\}\mbox{ and
        }\Lambda_{i} = \left\{ \lambda_{i}=0 \right\}.
\end{equation}
Then, for $i=1,\ldots,4$,  we have the following singularity pattern:
\begin{equation}
        \label{eq:eulersingpatt}
        \mathrm{K}_i 
        \longrightarrow 
        s_{i} 
        \longrightarrow
        s_{i}'
        \dashrightarrow
        \Lambda_{i},
\end{equation}
where
\begin{subequations}
        \label{sisip}
        \begin{align}
                \label{s1s1p}
                s_{1} = 
                \left[ 2\alpha_{1}:2\alpha_{2}:-2\alpha_{3}:-\alpha_{1}\alpha_{2}\alpha_{3}h \right],
                &\quad
                s_{1}'=
                \left[ 2\alpha_{1}:2\alpha_{2}:-2\alpha_{3}:\alpha_{1}\alpha_{2}\alpha_{3}h \right],
                \\
                \label{s2s2p}
                s_{2} =
                \left[ 2\alpha_{1}:-2\alpha_{2}:-2\alpha_{3}:\alpha_{1}\alpha_{2}\alpha_{3}h \right],
                &\quad
                s_{2}'=
                \left[ 2\alpha_{1}:-2\alpha_{2}:-2\alpha_{3}:-\alpha_{1}\alpha_{2}\alpha_{3}h \right],
                \\
                \label{s3s3p}
                s_{3} = 
                \left[ 2\alpha_{1}:-2\alpha_{2}:2\alpha_{3}:-\alpha_{1}\alpha_{2}\alpha_{3}h \right],
                &\quad
                s_{3}'=
                \left[ 2\alpha_{1}:-2\alpha_{2}:2\alpha_{3}:\alpha_{1}\alpha_{2}\alpha_{3}h \right],
                \\
                \label{s4s4p}
                s_{4} = 
                \left[ 2\alpha_{1}:2\alpha_{2}:2\alpha_{3}:\alpha_{1}\alpha_{2}\alpha_{3}h \right],
                &\quad
                s_{4}'=
                \left[ 2\alpha_{1}:2\alpha_{2}:2\alpha_{3}:-\alpha_{1}\alpha_{2}\alpha_{3}h \right].
        \end{align}
\end{subequations}
The singularity patterns in \eqref{eq:eulersingpatt} are depicted in
\Cref{fig:singularitieseuler}. This immediately explains the growth
in equation  \eqref{eq:dgrowth}: the deviation from the standard growth happens at
the third step because after three steps the map enters the singularities.
Notice that, the degree drop is exactly given by $\deg \kappa = 8$ because $\kappa$ is the common factor to be removed in the computation of the degree.
A similar analysis for $\Phi_h^{-1}$ shows that on the third iterate the common 
factor $\lambda$, whose degree is eight, is removed.

\begin{figure}[htb]
        \begin{tikzpicture}[scale=0.4]
                \draw[thick] (0,0)--(4,4)--(4,12)--(0,8)--(0,0);
                \node[below right] at (2,2) {$\mathrm{K}_{i}$};
                \draw[->] (5,6) -- node[above]{$\Phi_{h}$} (8.6,6) ;
                \filldraw (9,6) circle (3pt);
                \node[above] at (9,6) {$s_{i}$};
                \draw[->] (9.4,6) -- node[above]{$\Phi_{h}$} (13,6) ;
                \filldraw (13.4,6) circle (3pt);
                \node[above] at (13.4,6) {$s'_{i}$};
                \draw[->,dashed] (13.8,6) -- node[above]{$\Phi_{h}$} (17.6,6) ;
                \draw[thick] (18,0)--(22,4)--(22,12)--(18,8)--(18,0);
                \node[below right] at (20,2) {$\Lambda_{i}$};
        \end{tikzpicture}
        \caption{The blow-down blow-up sequence of the Euler top $\Pj^{3}$.}
        \label{fig:singularitieseuler}
\end{figure}

\begin{remark}
        \label{rem:limit}
        Note that, as $h\to 0^{+}$, all the points appearing in
        the singularity pattern are pushed on the plane at infinity
        $L_{\infty} = \left\{ x_4=0 \right\}$. This has to be expected
        because the invariants (first integrals) of the continuous
        Euler top \eqref{eq:euler} are polynomials \cite{Arnold1997}
        and polynomials are singular only at infinity (see
        also \cite{GrammaticosHalburdRamaniViallet2009,BellonViallet1999}
        for a similar discussion on the importance of singularities for
        polynomial maps).
\end{remark}

The geometry of of the singularity confinement is also enough
to build the invariants of the system, and hence to prove integrability
in the na\"ive sense (see \cite{GJTV_class}). Indeed, by considering
the net $\Sigma$ of quadrics passing through the points $s_{i}$ and $s_{i}'$, we find:
\begin{equation}
        \label{eq:Qeuler}
        \Sigma =
        \Set{Q_{\mu,\nu,\xi}\subset \Pj^3 | 
                \begin{array}{c}
                        \left[\mu:\nu:\xi\right]\in\Pj^2\\
                        Q_{\mu,\nu,\xi}=\left\{\mu \left( x_4^{2} - \frac{a_{1}a_{2} x_3^{2}}{4}  \right)
                        +\nu \left( x_2^{2} - \frac{a_{2}x_3^{2}}{a_3}  \right)
                        +\xi \left( x_1^{2} - \frac{a_1 x_3^{2}}{a_3}  \right)=0 \right\} 
                \end{array}
        } .
\end{equation}

Then, it is easy to show that $(\Phi_{h}^{-1})^{*}(Q) = \kappa_{1} \kappa_{2}
\kappa_{3} \kappa_{4} Q$, i.e. the net $\Sigma$  enjoys nice
\emph{covariance} properties (see \Cref{def:convariance}) with respect to the
KHK discretisation of the Euler top \eqref{eq:eulerxp}. 
This easily yields two functionally independent invariants:
\begin{equation}
        \label{eq:inveuler}
        I_{1} = 
        \frac{x_1^{2} - a_1 x_3^{2}/a_3}{
                x_4^{2} - a_1a_2 x_3^{2}/4},
        \quad
        I_{2} =
        \frac{x_2^{2} -  a_2x_3^{2}/a_3}{
                x_4^{2} - a_1a_2x_3^{2}/4}.
\end{equation}
The situation is a three-dimensional generalisation of the
results in \cite{PSS2019,GMcLQ_dummies}. Therein, it was proved how the
geometric structure of the KHK discretisation of a two-dimensional system
with cubic Hamiltonian determines its integrability. Moreover, a special r\^ole is played by the singular fibres of the associated
pencil. Note that the coordinate points of $\Sigma\cong\Pj^2_{[\mu: \nu: \xi]}$ 
are indeed singular members of the net.

\begin{remark}
    \label{rem:nongeneral} 
    We note that the eight points
    $\Set{s_{i}}_{i=1}^4 \cup \set{s_{i}'}_{i=1}^4$, are not in
    \emph{general position}. Indeed, in general, eight points of
    $\Pj^3$  generates only a pencil of quadrics.
    In \Cref{puntifissi} we will explain their relation with the fixed locus of the Cremona transformation. This led us to depict them as two tetrahedra in 
    \Cref{fig:tetra}.  To see that they are not in general position notice, for instance, that this set of vertices contains \emph{twelve 
    distinct co-planar quadruples}.
    We will see in \Cref{sec:cubegroup} how this this related to the
    \emph{Cremona-cubes group}.
\end{remark}

\begin{figure}[hbt]
        \centering
        
        \tdplotsetmaincoords{70}{130}
        \begin{tikzpicture}[scale=4,tdplot_main_coords]
                \draw[color=black,thick,->] (0,0,0) -- (1.5,0,0) node[anchor=north east]{$x_1/x_4$};
                \draw[color=black,thick,->] (0,0,0) -- (0,1.5,0) node[anchor=north west]{$x_2/x_4$};
                \draw[color=black,thick,->] (0,0,0) -- (0,0,1) node[anchor=north west]{$x_3/x_4$};
                
                \def\alpha{2}
                \def\beta{2}
                \def\gamma{2}
                \def\h{1}

                \coordinate (s1) at ({-2/(\beta*\gamma*\h)}, {2/(\alpha*\gamma*\h)}, {2/(\alpha*\beta*\h)});
                \coordinate (s2) at ({2/(\beta*\gamma*\h)}, {-2/(\alpha*\gamma*\h)}, {2/(\alpha*\beta*\h)});
                \coordinate (s3) at ({-2/(\beta*\gamma*\h)}, {-2/(\alpha*\gamma*\h)}, {-2/(\alpha*\beta*\h)});
                \coordinate (s4) at ({2/(\beta*\gamma*\h)}, {2/(\alpha*\gamma*\h)}, {-2/(\alpha*\beta*\h)});
                
                \coordinate (s1p) at ({2/(\beta*\gamma*\h)}, {2/(\alpha*\gamma*\h)}, {2/(\alpha*\beta*\h)} );
                \coordinate (s2p) at ({-2/(\beta*\gamma*\h)}, {-2/(\alpha*\gamma*\h)}, {2/(\alpha*\beta*\h)});
                \coordinate (s3p) at ({-2/(\beta*\gamma*\h)}, {2/(\alpha*\gamma*\h)}, {-2/(\alpha*\beta*\h)});
                \coordinate (s4p) at ({2/(\beta*\gamma*\h)}, {-2/(\alpha*\gamma*\h)}, {-2/(\alpha*\beta*\h)});

                \draw[red,thick] (s1)--(s2)--(s3)--(s1);
                \draw[red,thick] (s1)--(s2)--(s4)--(s1);
                \draw[red,thick] (s1)--(s3)--(s4)--(s1);

                \draw[blue,thick] (s1p)--(s2p)--(s3p)--(s1p);
                \draw[blue,thick] (s1p)--(s2p)--(s4p)--(s1p);
                \draw[blue,thick] (s1p)--(s3p)--(s4p)--(s1p);

                \node[red,anchor=south west] at (s1) {$s_1$};
                \draw[fill = red,draw=red] (s1) circle (0.5pt);
                \node[red,anchor=south east] at (s2) {$s_2$};
                \draw[fill = red,draw=red] (s2) circle (0.5pt);
                \node[red,anchor=west] at (s3) {$s_3$};
                \draw[fill = red,draw=red] (s3) circle (0.5pt);
                \node[red,anchor=north east] at (s4) {$s_4$};
                \draw[fill = red,draw=red] (s4) circle (0.5pt);

                \node[blue,anchor=south west] at (s1p) {$s_1'$};
                \draw[fill = blue,draw=blue] (s1p) circle (0.5pt);
                \node[blue,anchor=south east] at (s2p) {$s_2'$};
                \draw[fill = blue,draw=blue] (s2p) circle (0.5pt);
                \node[blue,anchor=west] at (s3p) {$s_3'$};
                \draw[fill = blue,draw=blue] (s3p) circle (0.5pt);
                \node[blue,anchor=north east] at (s4p) {$s_4'$};
                \draw[fill = blue,draw=blue] (s4p) circle (0.5pt);

        \end{tikzpicture}    
        \caption{The two tetrahedra in the finite chart 
                $\left\{ x_4\neq0 \right\}\subset \Pj^{3}$.}
        \label{fig:tetra}
\end{figure}

\begin{remark}
        \label{rem:invariants}
        The two invariants found in \Cref{eq:inveuler} are related
        to those known in the literature
        \begin{equation}
                \label{eq:inveuler_orig}
                F_{1} = 
                \frac{x_4^2-a_1 a_3 h^2 x_2^2/4}{x_4^2-a_1a_2 h^2 x_3^2/4},
                \quad
                F_{2} =
                \frac{x_4^2 - a_1a_2 h^2 x_3^2/4}{x_4^2-a_2a_3 h^2 x_1^2/4},
        \end{equation}
        (see \cite{Alonso_et_al2022, PetreraSuris2010,HirotaKimura2000}),
        through the transformation
        \begin{equation}
                \label{eq:funcdep}
                F_{1} = 1-\frac{a_1a_3 h^2}{4} I_{2},\quad 
                F_{2} = 
                \frac{1}{1-a_2a_3 h^2 I_1/4}.
        \end{equation}
\end{remark}

The following Lemma provides a motivation to study the Cremona-cubes group (see \cite[Proposition
7.2.]{Alonso_et_al2022} for more details).

\begin{lemma}
        \label{lem:eulerdecomposition}
        The KHK discretisation of the Euler top \eqref{eq:eulerxp}
        decomposes as $\Phi_{h} = \ell_{2}\circ \crem_{3} \circ \ell_{1}$
        where $\ell_{i}\colon \Pj^{3}\to\Pj^{3}$ are two projectivies
        whose representative matrices with respect to the standard homogeneous coordinate system in $\PGl(3,\C)$ are given by:
        \begin{subequations}
                \label{eq:ell12}
                \begin{align}
                        \label{eq:ell1}
                        M_{\ell_{1}} &= 
                        \begin{bmatrix}
                                \alpha_2 \alpha_3 h & \alpha_1 \alpha_3 h & -\alpha_1 \alpha_2 h & -2
                                \\ 
                                \alpha_2 \alpha_3 h & -\alpha_1 \alpha_3 h & -\alpha_1 \alpha_2 h & 2
                                \\
                                \alpha_2\alpha_3 h & -\alpha_1 \alpha_3 h & \alpha_1 \alpha_2 h & -2
                                \\ 
                                \alpha_2 \alpha_3 h & \alpha_1 \alpha_3 h & \alpha_1\alpha_2 h & 2
                        \end{bmatrix},
                        \\
                        \label{eq:ell2}
                        M_{\ell_{2}} &=
                        \begin{bmatrix}
                                2 \alpha_1 & 2 \alpha_1 & 2 \alpha_1 & 2 \alpha_1
                                \\ 
                                2 \alpha_2 & -2 \alpha_2 & -2 \alpha_2 & 2 \alpha_2
                                \\
                                -2 \alpha_3 & -2 \alpha_3 & 2 \alpha_3 & 2 \alpha_3
                                \\ 
                                \alpha_1 \alpha_2 \alpha_3 h & -\alpha_1 \alpha_2 \alpha_3 h & 
                                \alpha_1 \alpha_2 \alpha_3 h & -\alpha_1 \alpha_2 \alpha_3 h
                        \end{bmatrix}.
                \end{align}
        \end{subequations}
\end{lemma}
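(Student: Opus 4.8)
The statement is an identity between two birational self-maps of $\Pj^3$, so the plan is to reduce it to a polynomial identity and then verify it. Two rational maps $\Pj^3\dashrightarrow\Pj^3$ coincide precisely when their homogeneous coordinate tuples become proportional after removing common factors; since $\crem_3$ has degree three and the $\ell_i$ are projectivities, the composite $\ell_2\circ\crem_3\circ\ell_1$ has degree three, exactly as $\Phi_h$ does. Hence it suffices to compute the composite and compare it, up to a single nonzero scalar, with the tuple \eqref{eq:eulerxp}. I would organise the computation along the three stages of the composition: first apply $\ell_1$, obtaining the four linear forms $L_i=\sum_j (M_{\ell_1})_{ij}x_j$ given by the rows of $M_{\ell_1}$; then apply $\crem_3$ in its polynomial form $[y_1:\dots:y_4]\mapsto[y_2y_3y_4:y_1y_3y_4:y_1y_2y_4:y_1y_2y_3]$ coming from \eqref{eq:C3}, producing the four cubics $L_2L_3L_4,\;L_1L_3L_4,\;L_1L_2L_4,\;L_1L_2L_3$; finally left-multiply the resulting vector of cubics by $M_{\ell_2}$ and collect terms.

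Before expanding, I would use the geometry of the singularity pattern \eqref{eq:eulersingpatt} both to motivate the two matrices and to structure the check. Comparing the columns of $M_{\ell_2}$ in \eqref{eq:ell2} with the points in \eqref{sisip}, one sees at once that the $j$-th column is the coordinate vector of $s_j'$, so that $\ell_2$ is the projectivity sending the coordinate point $e_j$ to $s_j'$. Dually, the rows of $M_{\ell_1}$ are, up to sign, the linear forms $\lambda_j$ from \eqref{eq:lambdai}, so that $\ell_1$ carries the corresponding planes $\Lambda_j$ to the coordinate planes $\{y_j=0\}$ that $\crem_3$ contracts. Thus $M_{\ell_1}$ and $M_{\ell_2}$ are assembled directly from the planes and points entering \eqref{eq:eulersingpatt}; this matches the contraction data of $\ell_2\circ\crem_3\circ\ell_1$ with the singular data of $\Phi_h$, and reduces the remaining work to fixing the residual diagonal (torus) scaling and the overall scalar, i.e.\ to matching the two maps at a single generic point.

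The one genuine difficulty is computational and concerns the square roots. The entries of $M_{\ell_1}$ and $M_{\ell_2}$ involve $\alpha_j$ with $\alpha_j^2=a_j$, whereas \eqref{eq:eulerxp} is polynomial in the $a_j$; one must therefore check that, after expansion, every monomial of each output component carries even powers of each $\alpha_j$, the odd contributions cancelling among the four cubics produced by $\crem_3$. A representative instance is the coefficient of $x_4^3$ in the first three components: the alternating signs in the last column of $M_{\ell_1}$ together with the constant rows of $M_{\ell_2}$ force this coefficient to vanish, removing the would-be odd term. Once these cancellations are verified and the common scalar relating the expanded composite to \eqref{eq:eulerxp} is identified, proportionality of the two cubic tuples yields $\Phi_h=\ell_2\circ\crem_3\circ\ell_1$. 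This final step is a finite symbolic computation, which can be carried out by hand with careful bookkeeping of the $\alpha_j$ or, as in \cite{Alonso_et_al2022}, with a short computer-algebra verification.
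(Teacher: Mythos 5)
Your overall strategy---reduce the identity to a proportionality of two quadruples of cubics and verify it symbolically---is exactly the paper's (its proof is the single sentence ``direct computation''), and your preliminary identifications are correct: the rows of $M_{\ell_1}$ are, up to sign, the linear forms $\lambda_j$ of \eqref{eq:lambdai}, and the $j$-th column of $M_{\ell_2}$ is the point $s_j'$ of \eqref{sisip}. The gap is in the conclusion you draw from this. A plane $D$ is contracted by $\ell_2\circ\crem_3\circ\ell_1$ precisely when $\ell_1(D)$ is a coordinate plane, so the composite contracts the planes $\Lambda_j=\ell_1^{-1}(\Set{y_j=0})$ to the points $\ell_2(e_j)=s_j'$. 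But by the singularity pattern \eqref{eq:eulersingpatt}, $\Phi_h$ contracts $\mathrm{K}_j$ to $s_j$; it is the \emph{inverse} $\Phi_h^{-1}=\Phi_{-h}$ that contracts $\Lambda_j$ to $s_j'$. So the contraction data do \emph{not} match the singular data of $\Phi_h$ as you assert---they match those of $\Phi_{-h}$. A spot-check confirms this: taking $a_1=a_2=a_3=4$, $\alpha_i=2$, $h=1$, one computes
\begin{equation*}
    \Phi_h([1:1:1:1])=[1:1:1:-3],
    \qquad
    (\ell_2\circ\crem_3\circ\ell_1)([1:1:1:1])=[1:1:1:5]=\Phi_{-h}([1:1:1:1]),
\end{equation*}
and likewise a generic point of $\mathrm{K}_1$ is sent by $\Phi_h$ to $s_1$ while the composite sends a generic point of $\Lambda_1$ to $s_1'$.

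Consequently the symbolic verification you defer to at the end would not close: carried out, it yields $\ell_2\circ\crem_3\circ\ell_1=\Phi_{-h}$, not $\Phi_h$. The identity as printed holds only after replacing $h$ by $-h$ in \eqref{eq:ell12} (equivalently, building the rows of $M_{\ell_1}$ from the forms $\kappa_j$ of \eqref{eq:kappai} and the columns of $M_{\ell_2}$ from the points $s_j$); with that correction your argument goes through verbatim, and no choice of the square roots $\alpha_j$ can repair the sign instead, since flipping any $\alpha_j$ preserves each of the sets $\Set{\mathrm{K}_i}$ and $\Set{\Lambda_i}$. The step you asserted rather than performed---matching the contracted planes of the composite against those of $\Phi_h$---is a one-line check and is exactly where the discrepancy surfaces; it should be carried out, not invoked as motivation.
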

\begin{proof}
        The proof consists of a direct computation.
\end{proof}
\begin{remark}
        \label{rem:matrices}
        Sometimes, when no possible confusion occurs, we will identify any linear map $\ell \in \Bir(\Pj^{3})$ with the
        associated matrix $M_{\ell}\in\Pj\Gl(3,\C)$ and write $\ell$ in place of $M_{\ell}$.
\end{remark}

\begin{remark}
        The base loci of $\Phi_{h}$ and its inverse consist of the twelve lines on which lie the edges of two tetrahedra (see \Cref{fig:tetra}). Moreover, these lines are obtained as two by two intersections
        of the eight planes $\mathrm{K}_{i}$ and $\Lambda_{i}$ introduced in \eqref{piani}:
        \begin{equation}
                \label{eq:Lij}
                L_{i,j} = \mathrm{K}_{i}\cap\mathrm{K}_{j},
                \quad
                L_{i,j}' = \Lambda_{i}\cap\Lambda_{j},
                \quad 1 \leq i<j\leq 4.
        \end{equation}
        That is:
        \begin{equation}
                \label{eq:basephi}
                \Base \Phi_{h} = \bigcup_{1 \leq i<j\leq 4} L_{i,j},
                \quad
                \Base \Phi_{h}^{-1} = \bigcup_{1 \leq i<j\leq 4} L_{i,j}'.
        \end{equation}
\end{remark}

The importance of \Cref{lem:eulerdecomposition} is highlighted in
the following simple corollary.

\begin{corollary}
        \label{cor:biratequiv}
        The KHK discretisation of the Euler top \eqref{eq:eulerxp} is 
        projectively equivalent to the map $\Phi^{(0)} = g_0 \circ \crem_{3}$ where:
        \begin{equation}
                \label{eq:g0def}
                g_0 = 
                \begin{bmatrix}
                        1&-1&-1&-1
                        \\ 
                        -1&1&-1&-1
                        \\
                        -1&-1&1&-1
                        \\
                        -1&-1&-1&1
                \end{bmatrix}.
        \end{equation}
        Moreover, the projectivity $g_0$ is an involution.
\end{corollary}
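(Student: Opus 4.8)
I read ``projectively equivalent'' as ``conjugate by an element of $\PGl(3,\C)$'', which is the pertinent equivalence here since, by \Cref{rem:algent}, conjugation preserves both the degree sequence and the algebraic entropy of a birational map. The first move is to invoke \Cref{lem:eulerdecomposition} to write $\Phi_h = \ell_2 \circ \crem_3 \circ \ell_1$ and then to conjugate by the projectivity $\ell_1$:
\[
\ell_1 \circ \Phi_h \circ \ell_1^{-1} = (\ell_1 \circ \ell_2) \circ \crem_3 ,
\]
because the trailing $\ell_1$ cancels against $\ell_1^{-1}$. This already reduces the statement to showing that $(\ell_1\ell_2)\circ\crem_3$ is conjugate to $g_0\circ\crem_3$, and, more importantly, it collapses the entire dependence on the parameters $\alpha_1,\alpha_2,\alpha_3,h$ into the single projectivity $\ell_1\ell_2$.

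The conceptual crux is a factorisation of the matrices in \eqref{eq:ell1} and \eqref{eq:ell2}. One checks that $M_{\ell_1} = S_1 D_1$ and $M_{\ell_2} = D_2 S_2$, where $D_1 = \diag(\alpha_2\alpha_3 h,\, \alpha_1\alpha_3 h,\, \alpha_1\alpha_2 h,\, 2)$ and $D_2 = \diag(2\alpha_1,\, 2\alpha_2,\, 2\alpha_3,\, \alpha_1\alpha_2\alpha_3 h)$ are diagonal, while $S_1,S_2$ have all entries in $\{\pm 1\}$. The decisive point, which I would isolate as the heart of the argument, is the cancellation $D_1 D_2 = 2\alpha_1\alpha_2\alpha_3 h\cdot \Id$, whence $M_{\ell_1}M_{\ell_2} = 2\alpha_1\alpha_2\alpha_3 h\, S_1 S_2$; projectively $\ell_1\ell_2 = [\,S_1 S_2\,]$ is a \emph{parameter-free} matrix of signs. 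It then remains to conjugate $[\,S_1 S_2\,]\circ\crem_3$ into $g_0\circ\crem_3$, and for this I would use the commutation rule $\crem_3\circ D = D^{-1}\circ\crem_3$ valid for any diagonal $D$. Conjugating by the diagonal sign matrix $D=\diag(1,-1,1,-1)$, which satisfies $D=D^{-1}$ and hence commutes with $\crem_3$, replaces the sign matrix $S_1 S_2$ by $D\,(S_1 S_2)\,D$. Writing $S_1 S_2 = 2M$ with $M$ a sign matrix of unit diagonal, a short computation gives $D M D = g_0$, so that the conjugated map is $g_0\circ\crem_3 = \Phi^{(0)}$; combining the two steps is the same as conjugating $\Phi_h$ in one stroke by $D\ell_1$.

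The one place requiring genuine care—what I expect to be the main obstacle—is the existence of such a $D$. Matching $g_0$ forces $d_i^2=1$ on the diagonal, i.e.\ $d_i\in\{\pm1\}$, and $d_i d_j M_{ij} = -1$ off the diagonal; this linear system over $\{\pm1\}$ is solvable exactly when the triple products $M_{ij}M_{jk}M_{ik}$ are all equal to $-1$, which is a finite consistency check that holds for our $M$, with $D=\diag(1,-1,1,-1)$ an explicit solution. Finally, for the involution claim I would write $g_0 = 2\,\Id - J$ with $J$ the $4\times 4$ all-ones matrix; since $J^2 = 4J$ one obtains $g_0^2 = 4\,\Id - 4J + J^2 = 4\,\Id$, a scalar matrix, so $g_0$ represents an involution in $\PGl(3,\C)$.
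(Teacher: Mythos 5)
Your proof is correct, and it is in fact more careful than the one in the paper. The paper's argument is the one-liner ``after noticing that $g_0=\ell_1\circ\ell_2$, the proof consists on a direct computation'', i.e.\ it identifies $\ell_1\circ\ell_2$ with $g_0$ on the nose after the same conjugation by $\ell_1$ that you perform. With the matrices as printed in \eqref{eq:ell12}, however, your factorisation is the accurate account: one finds
\begin{equation*}
M_{\ell_1}M_{\ell_2}\;=\;4\alpha_1\alpha_2\alpha_3 h
\begin{pmatrix}
1&1&-1&1\\ 1&1&1&-1\\ -1&1&1&1\\ 1&-1&1&1
\end{pmatrix},
\end{equation*}
which is a parameter-free sign matrix but is \emph{not} proportional to $g_0$; it only becomes $g_0$ after the further conjugation by $D=\diag(1,-1,1,-1)$, which is legitimate precisely because $\crem_3\circ D=D^{-1}\circ\crem_3$ and $D=D^{-1}$. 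So the extra step you isolate (the cancellation $D_1D_2=2\alpha_1\alpha_2\alpha_3 h\,\Id$, the consistency condition $M_{ij}M_{jk}M_{ik}=-1$ for solvability of $d_id_j=-M_{ij}$ over $\{\pm1\}$, and the explicit $D$) is not redundant bookkeeping: it is what actually closes the gap between the printed matrices and the stated $g_0$, while leaving the corollary's conclusion intact since the composite conjugation is by the single projectivity $D\ell_1$. Your treatment of the involution claim via $g_0=2\Id-J$ and $J^2=4J$, giving $g_0^2=4\Id$, also matches what the paper's ``direct computation'' must amount to. The only caveat is that the conclusion $D\,(S_1S_2/2)\,D=g_0$ rests on the sign conventions of \eqref{eq:ell12} being exactly as printed; under any convention the argument structure you give is the robust one.
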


\begin{proof} After noticing that $g_0=\ell_{1}\circ\ell_{2}$, the proof consists on a direct computation.
\end{proof}

\begin{remark}
    We remark that the conjugation of $\Phi_h$ by $\ell_1$ makes the
    discretisation parameter $h$ disappear. This implies that the 
    integrability properties of the system are \emph{independent from 
    the discretisation parameter} (see \cite[Introduction]{GubLat_sl2}).
\end{remark}

The great advantage in studying $\Phi^{(0)}$ with respect to the original
$\Phi_{h}$ is that $\Phi^{(0)}$ is the composition of two involutions, making it
strikingly similar to the QRT map construction \cite{QRT1989,QRT1988}.
Moreover, the rows and the columns of the matrix $g_0$ are made of
\emph{fixed points} of the standard Cremona transformation $\crem_{3}$.
This construction unveils the geometric structure underlying the KHK
discretisation of the Euler top \eqref{eq:eulerxp}. In this paper we consider a class of maps sharing similar behaviour (see \Cref{possibilemma}). The most important finding
is to consider birational maps decomposing as $\Phi = g \circ
\crem_{3}$, where $g\in\Pj\Gl(4,\C)$ is a projectivity having the following properties:
\begin{enumerate}
    \item\label{cond:0} up to the multiplicative action of $\C^*$ on the space of matrices, the entries of the matrix representing $g$ with respect the canonical projective coordinates belong to $\Set{-1,0,+1}$;
    \item \label{cond:1} the projectivity $g$ is represented by a matrix whose rows and columns are coordinate points or fixed points of the standard Cremona transformation $\crem_{3}$;
    \item\label{cond:2}  the order of  $g$ is finite (not necessarily an involution);
    \item\label{cond:3} for any $k\ge0$,  $g^{k}$ verifies (1),(2) and (3).
\end{enumerate}

The above requirements will be made mathematically rigorous in the next
section (see \Cref{possibilemma,def:cubegroup}), with the definition of
the \emph{Cremona-cubes group}.

\section{The Standard Cremona Transformation in dimension three}
\label{sec:cremona}

It is a classical fact known as the Noether--Castelnuovo theorem
\cite{Noether1870,Castelnuovo1901} that the Cremona group $\Bir (\Pj^2)$
of the projective plane is generated by $\Pj\Gl (3,\C)$ and the standard
Cremona transformation $\crem_2$, that is the birational map defined as:
\begin{equation}
        \label{eq:C2}
        \begin{tikzcd}[row sep=tiny]
                \Pj^2 \arrow[dashed]{r}{\crem_2} & \Pj^2 \\
                {[}x_1:x_2:x_3{]} \arrow[mapsto]{r} & \left[\frac{1}{x_1}:\frac{1}{x_2}:\frac{1}{x_3}\right].
        \end{tikzcd}
\end{equation}

The resolution of the indeterminacy locus of the map $\crem_2$ is
described in \Cref{Cremonaresolution2}. The two involved $\Pj^2$'s are
represented as triangles whose edges correspond to the coordinate lines,
while the exagon represents a del Pezzo surface of degree six, i.e. the blow-up of $\Pj^2$ at three non-collinear points, and its edges
correspond to $(-1)$-lines. The maps $\pi_1$ and $\pi_2$ are the blowups
of $\Pj^2$ with center the three coordinate points. In particular, the
dotted (resp. dashed) edges correspond to exceptional lines for $\pi_1$
(resp. $\pi_2$) (see \cite[Example 7.1.9]{Dolgachev2012book} for more
details).

\begin{figure}[H]
        \centering
        \begin{tikzpicture}
                \node at (-1.25,-1) {$\underset{\mbox{\normalsize $\Pj^2$}}{\begin{matrix}
                                        \begin{tikzpicture}
                                                \draw [dashed] (-0.3,-0.5)--(0.3,-0.5)--(0,0)--(-0.3,-0.5);
                                        \end{tikzpicture} 
                        \end{matrix}}$};
                \node at (1.25,-1) {$\underset{\mbox{\normalsize $\Pj^2$}}{\begin{matrix}
                                        \begin{tikzpicture}
                                                \draw [dotted] (-0.3,0.5)--(0.3,0.5)--(0,0)--(-0.3,0.5);
                                        \end{tikzpicture} 
                        \end{matrix}}$};
                \node at (0,0.5) {\begin{tikzpicture}\draw [dashed] (-0.2,-0.33)--(0.2,-0.33)(0.433,0)--(0.2,0.33)(-0.433,0)--(-0.2,0.33);
                                \node at (0,0) {$\dP{6}$};
                                \draw [dotted] (-0.2,0.33)--(0.2,0.33)(0.433,0)--(0.2,-0.33)(-0.433,0)--(-0.2,-0.33);
                \end{tikzpicture}};
                \draw[dashed,->] (-0.8,-0.7)--(0.8,-0.7);
                \draw[->] (0.4,0.2)--(0.9,-0.4);
                \node[right] at (0.55,0) {\footnotesize $\pi_2$};
                
                \draw[->] (-0.4,0.2)--(-0.9,-0.4);
                \node[left] at (-0.55,0) {\footnotesize $\pi_1$};
                \node[below] at (0,-0.7) {\footnotesize $\crem_2$};
        \end{tikzpicture}
        \caption{The resolution of the indeterminacies of the standard 
        Cremona transformation in dimension 2.}
        \label{Cremonaresolution2}
\end{figure}

The situation in higher dimension happens to be much more intricate. For
instance, it is no longer true that the Cremona group $\Bir(\Pj^n)$ of
the $n$-dimensional projective space is generated by $\Pj\Gl (n+1,\C)$
and the higher dimensional analogue of standard Cremona transformation.

This section focuses on the standard Cremona transformation $\crem_3$ of $\Pj^3$. We will describe the resolution of the
indeterminacies of $\crem_3$, and then we will discuss the configuration
of the fixed points of $\crem_3$.

\subsection{The standard Cremona transformation in dimension three}\label{sec:3dimcremona}

Let $\crem_3\in\Bir(\Pj^3)$ be the standard Cremona transformation (see \eqref{eq:C3}).  The map $\crem_3$
is well defined outside the union of the six coordinate axes of $\Pj^3$. We can solve the
indeterminacy locus of the standard Cremona transformation as follows
\begin{equation}
\begin{tikzcd}
        &B \arrow{dr}{f}\arrow[swap]{dl}{g}   \\
        \Pj^3 \arrow[dashed]{rr}{\crem_3} && \Pj^3
\end{tikzcd}
    \label{eq:cremres}
\end{equation}
where $B\subset ({\Pj^3})^{\times 2}$ is the closure of the graph
of $\crem_3$,  $B=\overline{\graph({\crem_3})} $, and $f$ and $g$
are the restrictions to $B$ of the canonical projections. Thus,
$B$ is the complete intersection defined as follows (see also \cite[Example
7.2.5]{Dolgachev2012book})
\begin{equation}
    B=\Set{([x_1:x_2:x_3:x_4],[y_1:y_2:y_3:y_4])\in ({\Pj^3})^{\times 2}| \begin{array}{c}
                    x_1y_1=x_2y_2\\
                    x_1y_1=x_3y_3\\
                    x_1y_1=x_4y_4
    \end{array}}.
    \label{eq:defB}
\end{equation}

Let us denote by $B_{ij}$ the affine chart on $B$ defined by
\begin{equation}
    B_{ij}=\Set{([x_1:x_2:x_3:x_4],[y_1:y_2:y_3:y_4])|x_i\not=0\mbox{ and }y_j\not=0}.
    \label{eq:Bijdef}
\end{equation}
Then, the chart $B_{ij}$ is smooth if and only if $i=j$, otherwise
$B_{ij}$ has an isolated conifold singularity. Thus, $B$ has twelve singular
points. Moreover, the exceptional divisors of $g$ and $f$ agree. They consist of the union of six surfaces isomorphic to $({\Pj^1})^{\times 2}$, each mapping, via $g$ and $f$, onto a coordinate line of $\Pj^3$, and three projective planes which are contracted by $g$ to coordinate points. In particular, the three coordinate points of each of the four
exceptional $\Pj^2$'s are the conifold singularities of the ambient
threefold $B$.

The variety $B$, which is the blowup of $\Pj^3$ along the union of the coordinate axes, can be alternatively constructed as follows (see \Cref{COSTRUZIONE}):
\begin{enumerate}
        \item blowup the four vertices of the standard tetrahedron of $\Pj^3$, i.e. the coordinate points of $\Pj^3$. Since we are blowing up (reduced) smooth points, the exceptional locus is the disjoint union of 4 copies of $\Pj^2$;
        \item blowup the strict transforms of the six edges of the standard tetrahedron, i.e. the six coordinate lines of $\Pj^3$. The exceptional locus of this blowup is given by six copies of $({\Pj^1})^{\times 2}$. The four exceptional divisors of the previous step happens to be blown up at three distinct non-collinear points. As a consequence, their strict transforms are all isomorphic to a del Pezzo surface of degree six $\dP{6}$;
        \item the last step consists in contracting twelve $(-1,-1)$-lines, 
            i.e. twelve lines $L_i\cong \Pj^1$ for $i=1,\ldots,12$ with 
            normal bundle
            \begin{equation}
                \normalb{L_i}{\widetilde{B}}\cong\mathcal O_{\Pj^1}(-1)^{\oplus 2}\mbox{ for }i=1,\ldots,12, 
                \label{eq:normalbundle}
            \end{equation}
        where $\widetilde{B}$ is the variety constructed in the previous
        step.  To see this, notice that each line is the complete
        intersection of one of the del Pezzo and the strict transform
        of some coordinate hyperplane of $\Pj^3$.

        The lines $L_i$, for $i=1,\ldots,12$, are the strict transforms
        of the coordinate lines of the exceptional divisors in
        step (1). Notice (see \cite[Section 11.3]{HUYB}) that all the $L_i$'s
        are contracted to conifold points.
\end{enumerate}

\begin{figure}[H]
        \centering
        \begin{tikzpicture}
                \draw (0,0)--(1.26,0)--(0.63,-0.77)--(0,0);
                \draw (0,0)--(-0.63,0.77);
                \draw (1.26,0)--(1.89,0.77);
                \draw (0.63,-0.77)--(0.63,-1.77);
                \node at (0.63,-2.3) {(1)};
        \end{tikzpicture}
        \begin{tikzpicture}
                \node at (-2,0) {$\ $};
                \node at (3,0) {$\ $};
                \node at (0.5,-1.3) {(2)};
                \draw (0,0)--(1,0)--(1.63,0.77)--(1,1.54)--(0,1.54)--(-0.63,0.77)--(0,0);
                \draw (1.63,0.77)--(2.26,1.54)--(1.63,2.31)--(1,1.54);
                \draw (0,1.54)--(-0.63,2.31)--(-1.26,1.54)--(-0.63,0.77);
                \draw (0,0)--(0,-1)--(1,-1)--(1,0);
        \end{tikzpicture}
        \begin{tikzpicture}
        
                \draw (1.63,0.77)--(2.26,1.54)--(1.63,2.31)--(1,1.54);
                \draw (1,1.54)--(0.37,2.31)--(-0.26,1.54)--(0.37,0.77);
                \draw (1.63,0.77)--(1.63,-0.23)--(0.37,-0.23)--(0.37,0.77);
                \draw (1.63,0.77)--(0.37,0.77)--(1,1.54)--(1.63,0.77);
                \node at (1,-0.8) {(3)};
        \end{tikzpicture}
        \caption{The toric description of the three steps in the constuction of $B$ near a coordinate point of $\Pj^3$.}
        \label{COSTRUZIONE}
\end{figure}

\begin{remark}
        When restricted to each exceptional $\Pj^2$ in step (1), the concatenation of step (2) and (3) is a two-dimensional standard Cremona transformation.
\end{remark}

\begin{remark}
    \label{puntifissi} We have:
    \begin{equation}
        \label{eq:fixc3}
        \Fix\crem_3 = \Calp \cup \Calq,
        \quad 
         \Calp=\Set{p_1,p_2,p_3,p_4},
        \,
        \Calq=\Set{q_1,q_2,q_3,q_4},
    \end{equation}
    where:
    \begin{equation}
        \begin{aligned}
                p_1&=[1:-1:-1:-1],  \\
                p_2&=[-1:1:-1:-1], \\
                p_3&=[-1:-1:1:-1],  \\
                p_4&=[-1:-1:-1:1], 
        \end{aligned}
        \quad
        \begin{aligned}
                q_1&=[1:-1:-1:1],  \\
                q_{2}&=[-1:1:-1:1],  \\
                q_{3}&=[1:1:-1:-1],  \\
                q_{4}&=[1:1:1:1].
        \end{aligned}
    \end{equation}
    These eight points correspond to two four-tuples of lines of 
    $\C^4$ orthogonal with respect to the standard scalar product. In particular, these are four-tuples of points in general position. We highlight that the sets $\Calp,\Calq$ correspond to the sets $\Set{s_i}_{i=1}^4,\Set{s_i'}_{i=1}^4$ via the projectivity $\ell_1^{-1}$ introduced in \eqref{eq:ell1}. Moreover, the points in $\Fix \crem_3$ can be interpreted as the vertices of a cube in the affine space, as depicted in \Cref{CUBE}. In this paper, by vertices of a cube we mean the base locus of a general net of quadrics of $\Pj^3$ (see  \cite[App. B.5.2]{HuntGeomQuot1996} and \cite[Section 1.5.2]{Dolgachev2012book}). Note that we are considering only general nets in order to have a 0-dimensional\footnote{The dimension of the base locus may jump in some special cases, an example being the twisted cubic.} reduced base locus.
\end{remark}

\begin{remark}
        \label{punticonfinement} 
        If we are interested in spaces of initial values for $\crem_3$, 
        we do not need to work with a resolution of singularities of the variety $B$ in \Cref{sec:3dimcremona},
        and it is enough to consider the variety
        \begin{equation}
        \overline{B}=\Bl_{\Cale}\Pj^3, \quad
                \Cale=\Set{e_1,e_2,e_3,e_4}    
        \end{equation}
        where
        \begin{equation}
            \label{eq:edef}
            \begin{aligned}
                e_1&=[1:0:0:0],  
                \\
                e_2&=[0:1:0:0],  
                \\
                e_3&=[0:0:1:0],  
                \\
                e_4&=[0:0:0:1].     
            \end{aligned}
        \end{equation}
        Indeed, the only divisorial contractions of $\crem_3$ consist of contractions over 
        one of the $e_i$'s and the map induced by $\crem_3$ on $\overline{B}$ is algebraically stable.
\end{remark}

\section{The Cremona-cubes group \texorpdfstring{$\mathcal C$}{}}
\label{sec:cubegroup}

In this section we introduce the subgroup of $\PGl(3,\C)$
we are interested in. The subgroup $\mathcal C$ is defined in terms
of the ``special'' points introduced in the previous section (see
\Cref{puntifissi,punticonfinement}).

\begin{definition}\label{specialpoints}
        We will denote\footnote{The letter $\Calr$ stands for Reye (see \Cref{thecubes}).} by $\mathcal{R}\subset \Pj^3$ the
        finite subset containing all the points appearing in
        \Cref{puntifissi,punticonfinement}, i.e.
        \begin{equation}
                \label{eq:setSdef}
                \mathcal{R}=\Cale \cup\Calp\cup\Calq.
        \end{equation}
\end{definition}

As explained in \Cref{sec:motivations} we are interested in maps of the
form $\Phi=g\circ \crem_3$ where $g\in \Pj\Gl(4,\C)$ is a projectivity of finite order
that acts on the set $\mathcal{R}$.

\begin{definition}\label{def:cubegroup}
        We will call the \textit{Cremona-cubes group} the subgroup 
        $\mathcal{C}$ of $\Pj\Gl(4,\C)$ defined by:
        \begin{equation}
                \label{eq:CremonaGroup}
                \mathcal{C}=
                \Set{g\in\Pj\Gl(4,\C)|g\cdot \mathcal{R}\subseteq \mathcal{R}}.
        \end{equation}
\end{definition}

\begin{remark}
        We remark that, since $\Calr $ contains five-tuples of points in general position, we have $\stab_{\langle g \rangle} 
        (\mathcal{R})=\langle\Id_{\Pj\Gl
        (4,\C)}\rangle$,  for any $g\in\Calc$ (see \cite[Section 1.3]{PARDINI}).  Here, $\langle g \rangle $ denotes the cyclic subgroup of $\Pj\Gl
        (4,\C)$ generated by $g$, while $\stab_{\langle g \rangle}(\mathcal{R})$ is the following subgroup of $\langle g \rangle$:
        \begin{equation}
            \stab_{\langle g \rangle}(\mathcal{R})=\Set{h\in \langle g \rangle | h|_{\Calr }\equiv \Id_{\Calr}}.
        \end{equation} This implies that all the elements
        $g\in\Calc$ have finite order. Indeed, suppose that there exists a $g\in\Calc$ of infinite order. In particular, for any integer $k>1$, $g^k$ is not the inverse of $g$. Now, since $g$ acts on the finite set $\Calr$, there is an integer $k>1$ such that $g|_{\Calr}\equiv g^k|_{\Calr}$. This implies that $g^{1-k}$ would be a non trivial element in $\stab_{\langle g \rangle} 
        (\mathcal{R})$. 
\end{remark}

The following result tells us that, within $\Calr$, the three subsets $\Cale$,
$\Calp$, and $\Calq$ are mapped between themselves {as a whole}.

\begin{lemma}\label{azioneindotta}
        The action of $\mathcal{C}$ on $\mathcal{R}$ induces an action of
        $\mathcal{C}$ on the set $\Set{\Cale,\Calp,\Calq}$.
\end{lemma}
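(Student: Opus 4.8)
The plan is to isolate a property of the tripartition $\Set{\Cale,\Calp,\Calq}$ that refers only to the collinearity data of the twelve points of $\Calr$, because such data is automatically respected by any projectivity. First I would note that every $g\in\Calc$ restricts to a \emph{permutation} of $\Calr$: by definition $g\cdot\Calr\subseteq\Calr$, $g$ is injective since it is a projectivity, and $\Calr$ is finite, whence $g(\Calr)=\Calr$. Thus $\Calc$ acts on $\Calr$ by permutations, and the lemma amounts to showing that this action leaves the partition $\Set{\Cale,\Calp,\Calq}$ invariant, i.e.\ that each $g$ carries blocks to blocks.

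The geometric heart of the matter is that $\Cale$, $\Calp$ and $\Calq$ are three \emph{desmic} tetrahedra, which is the configuration underlying the name $\Calr$ (cf.\ \Cref{specialpoints}). Concretely, I would establish the following dichotomy for distinct $x,y\in\Calr$: the line $\overline{xy}$ meets $\Calr$ in a third point if and only if $x$ and $y$ lie in \emph{different} members of $\Set{\Cale,\Calp,\Calq}$, and in that case the third point lies in the remaining member. Granting this, I set $x\approx y$ when $\overline{xy}\cap\Calr=\Set{x,y}$ (and $x\approx x$); the dichotomy then says that $x\approx y$ holds precisely when $x$ and $y$ lie in the same member, so $\approx$ is an equivalence relation whose classes are exactly $\Cale,\Calp,\Calq$.

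Proving this dichotomy is the main, if elementary, obstacle, and I would organise the finite verification using the evident symmetries of $\Calr$ rather than inspecting all $\binom{12}{2}$ pairs separately. For two points of the same tetrahedron the joining line misses the remaining ten points: a line through two elements of $\Cale$ is a coordinate axis $\Set{x_k=x_l=0}$, which contains neither a further coordinate point nor any point of $\Calp\cup\Calq$ (all of whose coordinates are nonzero), and the analogous statements for $\Calp$ and $\Calq$ follow from a short direct computation, each set being in general position by \Cref{puntifissi}. For two points in distinct tetrahedra one exhibits the third point explicitly; for instance the identity $(1,1,1,1)=2(1,0,0,0)-(1,-1,-1,-1)$ among representative vectors shows that $e_1\in\Cale$, $p_1\in\Calp$ and $q_4\in\Calq$ are collinear, and every remaining cross pair is carried to such a configuration by a coordinate permutation or sign change preserving $\Calr$.

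Finally, because $\approx$ is defined solely through collinearity of triples inside $\Calr$, it is preserved by every $g\in\Calc$: such a $g$ sends lines to lines and permutes $\Calr$, hence maps collinear triples of $\Calr$ to collinear triples of $\Calr$ and therefore respects $\approx$. Consequently $g$ permutes the three classes $\Cale,\Calp,\Calq$, and $g\mapsto g|_{\Set{\Cale,\Calp,\Calq}}$ is the asserted induced action, namely a homomorphism $\Calc\to S_3$. I expect the only delicate point to be the bookkeeping in the collinearity dichotomy, which the symmetry of the desmic configuration keeps routine.
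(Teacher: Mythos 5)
Your proof is correct and rests on exactly the same key fact as the paper's: the collinearity dichotomy of the Reye configuration $\Calr$ (a line through two points of the same tetrahedron meets $\Calr$ in no further point, while a line through two points of different tetrahedra contains a third point, from the remaining tetrahedron), combined with the observation that a projectivity permuting $\Calr$ preserves collinear triples. The paper phrases the conclusion as a contradiction on a hypothetical $g$ splitting $\Cale$ across $\Calp$ and $\Calq$, whereas you package it as an equivalence relation whose classes are the three blocks; this is only a cosmetic difference.
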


\begin{proof}
        First notice that, if a line $L$ of $\Pj^3$ contains at least two points of $\Calr$, then it contains either three aligned points each
        belonging to one of the sets $\Cale,\Calp$ and $\Calq$ or two points
        from the same collection $\Cale,\Calp$ or $\Calq$.
        
        We now proceed by contradiction. Suppose, without loss of generality,
        that the projectivity $g$ sends the point $e_1$ to the point $p_1$
        and the point  $e_2$ to the point $q_2$, i.e.:
        \begin{equation}
                \label{eq:p1p2proof}
                \begin{tikzcd}[row sep=tiny]
                        e_1 \arrow[mapsto]{r}{g} & p_1 \\
                        e_2\arrow[mapsto]{r}{g} & q_2 .
                \end{tikzcd}
        \end{equation}
        Let $L_{12}$ be the line trough $p_1$ and $q_2$ and let $e_j$ be
        the third intersection point in $L_{12}\cap\Calr$, i.e.:
        \begin{equation}
                \label{eq:L59intersect}
                L_{12}\cap\Calr=\Set{e_j,p_1,q_2}.
        \end{equation}
        Then, we get 
        \begin{equation}
                g^{-1 }\cdot e_j \in (g^{-1}(L_{12})\smallsetminus\{e_1,e_2\})\cap\Calr.
                \label{eq:gpapplied}
        \end{equation} 
        Which is a contradiction.
\end{proof}

Now, we characterise the elements of $\Calc$ as belonging to three
different classes depending on their action on the set $\Set{\Cale,\Calp,\Calq}$. 
The following Lemma is crucial in this characterisation.

\begin{lemma}
        \label{possibilemma}
        Let $g\in\mathcal{C}\subset \Pj\Gl(4,\C)$ be an element
        of the Cremona-cubes group. Then, there is a matrix
        $\widetilde{g}\in\Gl(4,\C)$ representing $g$ whose entries
        belong to $\{-1,0,1\}$. Moreover, $g$ falls in one of the
        following cases.
        \begin{itemize}
                \item[\textit{(A)}] Both,  the columns and rows of $g$ represent the points in $\Calp$ (or in $\Calq$).
                \item[\textit{(B)}] The matrix $g$ is a permutation matrix with signs.
                \item[\textit{(C)}] The columns of $g$ represent the points in $\Calp$ and the rows represent the points in $\Calq$ (or viceversa).
        \end{itemize}
\end{lemma}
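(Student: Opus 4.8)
The plan is to combine the induced action of \Cref{azioneindotta} with a single numerical invariant that distinguishes $\Calp$ from $\Calq$. The starting observation is that $\Calp\cup\Calq$ is precisely the set of the eight projective points all of whose homogeneous coordinates equal $\pm1$, and that the two quadruples are told apart by the \emph{sign parity} $\chi(x)=\prod_{i=1}^{4}x_i\in\{+1,-1\}$, which is well defined on these $\pm1$ points because it is invariant under $x\mapsto-x$. A direct check gives $\chi\equiv-1$ on $\Calp$ and $\chi\equiv+1$ on $\Calq$. This parity is the bookkeeping device that will separate cases \textit{(A)} and \textit{(C)}.

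By \Cref{azioneindotta} the image $g\cdot\Cale$ is one of $\Cale,\Calp,\Calq$, and I would argue according to these three possibilities. If $g\cdot\Cale=\Cale$, then $g$ permutes the coordinate points, so every representative matrix of $g$ is monomial, with $i$-th column $\lambda_i e_{\sigma(i)}$ for some $\lambda_i\in\C$ nonzero and some permutation $\sigma$. Evaluating on $q_4=[1:1:1:1]$ gives a point with all coordinates nonzero, which therefore cannot lie in $\Cale$; since $g\cdot q_4\in\Calr$ it must be a $\pm1$ point, and this forces all the $\lambda_i$ to be equal up to sign. After rescaling, $g$ is represented by a signed permutation matrix, which is case \textit{(B)} and has entries in $\{-1,0,1\}$.

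For the remaining possibilities I would peel off a fixed reference element. Let $M$ be the matrix $g_0$ of \Cref{cor:biratequiv}, whose $i$-th column is $p_i$; since $g_0$ is an involution one has $M^2=4\,\Id$, and a direct verification shows $M\in\Calc$ with $M\cdot\Cale=\Calp$ and $M\cdot\Calp=\Cale$. If $g\cdot\Cale=\Calp$, then $h:=M^{-1}g\in\Calc$ satisfies $h\cdot\Cale=\Cale$, so by the previous paragraph $h$ is a signed permutation. Hence $\widetilde g:=Mh$ represents $g$, and being the product of a $\pm1$ matrix with a signed permutation it has all entries in $\{-1,+1\}$; each of its columns is a signed reordering of some $p_i$, so they lie on $\Calp$. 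Entirely analogously, the matrix $N$ whose $i$-th column is $q_i$ satisfies $N\in\Calc$, $N\cdot\Cale=\Calq$, $N\cdot\Calp=\Cale$ and $N\cdot\Calq=\Calp$, and it covers the case $g\cdot\Cale=\Calq$.

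The decisive point, which I expect to be the main obstacle, is to decide whether the \emph{rows} of $\widetilde g$ lie on $\Calp$ or on $\Calq$, since this is exactly what separates \textit{(A)} from \textit{(C)}. I would settle it by computing $\chi$ row by row. Writing the signed permutation $h$ so that its unique nonzero entry in column $l$ equals $\epsilon_l=\pm1$, the $i$-th row of $\widetilde g=Mh$ is obtained from the $i$-th row of $M$ by permuting its slots and multiplying slot $l$ by $\epsilon_l$; hence the value of $\chi$ on the $i$-th row of $\widetilde g$ equals $\bigl(\prod_l\epsilon_l\bigr)$ times its value on the $i$-th row of $M$. As $M$ is symmetric, its rows lie on $\Calp$, so $\chi$ takes the value $-1$ on them, and therefore \emph{every} row of $\widetilde g$ has the same value $-\prod_l\epsilon_l$. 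Thus $\prod_l\epsilon_l=+1$ puts all rows on $\Calp$, giving case \textit{(A)}, while $\prod_l\epsilon_l=-1$ puts all rows on $\Calq$, giving case \textit{(C)}. The branch $g\cdot\Cale=\Calq$ is identical once one notes that the rows of the reference matrix $N$ also lie on $\Calp$: the same parity computation then yields either \textit{(A)} (rows and columns both on $\Calq$) or \textit{(C)} (columns on $\Calq$, rows on $\Calp$). This exhausts all cases and establishes the trichotomy.
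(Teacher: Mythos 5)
Your proof is correct and follows the same route as the paper's: both rest on \Cref{azioneindotta} and split into cases according to which of $\Cale,\Calp,\Calq$ the set $\Cale$ is sent to. The paper dismisses the rest as a ``direct consequence'' (implicitly using that $g^{-1}$ also acts on $\Set{\Cale,\Calp,\Calq}$ to control the rows), whereas you make that step explicit and self-contained by factoring $g$ through the reference matrices $M=g_0$ or $N$ and a signed permutation, and then using the parity $\chi(x)=\prod_{i}x_i$ to decide whether the rows land on $\Calp$ or on $\Calq$ --- a correct way of filling in exactly the detail the paper leaves to the reader.
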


\begin{proof}
    The first part of the statement follows from the second, while the second
    part is  a direct consequence of \Cref{azioneindotta}. Indeed, as per
    \Cref{azioneindotta}, $g$ and $g^{-1}$ act on $\Set{\Cale,\Calp,\Calq}$
    and, depending on the action on this set, we get \textit{(A)},
    \textit{(B)} or \textit{(C)}.
\end{proof}

\begin{remark}
    Notis that, on the contrary, even if a projectivity of finite order $g\in\Pj\Gl(4,\C)$ verifies \textit{(A)}, \textit{(B)}, or \textit{(C)} it is not guaranteed that $g$ belongs to $\Calc$. Indeed, in general property  (\ref{cond:3}) in \Cref{sec:motivations} would not be satisfied.
\end{remark}

\begin{remark}\label{orbits}
    As a consequence of \Cref{possibilemma}, we can divide the elements of
    the Cremona-cubes group according to the orbit $\langle g \rangle\cdot
    \Cale$ of $\Cale$ via $g$:
    \begin{equation}\label{eq:orbit}
            \langle g \rangle\cdot \Cale=\Set{g^k\cdot e_i | k\in\N, \ 1\le i\le 4}.
    \end{equation}
    We have the following characterisation.
    \begin{itemize}
        \item An element $g \in \Calc$ belongs to case \textit{(A)} in
            \Cref{possibilemma} if and only if 
            $\langle g \rangle\cdot \Cale=\Cale\cup \Calp$ or 
            $\langle g \rangle\cdot \Cale=\Cale\cup \Calq$.
        \item An element $g \in \Calc$ belongs to case \textit{(B)} in 
            \Cref{possibilemma} if and only if 
            $\langle g \rangle\cdot \Cale=\Cale$.
        \item An element $g \in \Calc$ belongs to case \textit{(C)} in 
            \Cref{possibilemma} if and only if 
            $\langle g \rangle\cdot \Cale=\Cale\cup \Calp\cup \Calq$.        
    \end{itemize}
    We will see in \Cref{sec:entropy} that the orbit $\langle g \rangle
    \cdot \Cale$ \eqref{eq:orbit} plays a fundamental r\^ole in the
    confinement of singularities of the maps of the form $\Phi=g\circ
    \crem_{3}$ for $g\in\Calc$ (see also \Cref{thm:main}).
\end{remark}

\begin{definition}
    We will say that an element $g\in \Calc$ is of type \textit{(A)}
    (resp. of type \textit{(B)} or \textit{(C)}) if it belongs to
    the case \textit{(A)} (resp. \textit{(B)} or \textit{(C)}) in
    \Cref{possibilemma}.
\end{definition}

The following lemma investigates the relation between elements of the
Cremona-cubes group of different type.

\begin{lemma}\label{proprietagruppi}
        The following properties hold for the elements in $\Calc$ (see \Cref{possibilemma}).
        \begin{itemize}
                \item Two elements of type \textit{(A)} (resp.  \textit{(C)}) differ by multiplication by a permutation matrix with sign having an even number of -1 (which is an element of $\Calc$ of type \textit{(B)}).
                \item Two elements of type  \textit{(B)} differ by multiplication by a permutation matrix with signs, i.e. by an element of type \textit{(B)}. In particular, the elements of type \textit{(B)} form a subgroup of $\Calc$ that we will denote by $\Calc_{\mbox{\tiny\textit{(B)}}}$.
                \item An element of type \textit{(A)} differs by an element of type \textit{(C)} by multiplication by a permutation matrix with signs having an odd number of -1 (which is an element of $\Calc$ of type \textit{(B)}). 
                \item The inverse of an element of type \textit{(A)} (resp. \textit{(B)} or \textit{(C)}) is of  the same type.
        \end{itemize}
\end{lemma}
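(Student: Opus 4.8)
The plan is to classify the elements of $\Calc$ by their action on the three-element set $\{\Cale,\Calp,\Calq\}$ (which is well-defined by \Cref{azioneindotta}), and then translate each of the four claimed statements into a statement about composing the corresponding permutations of $\{\Cale,\Calp,\Calq\}$ and checking the sign/parity bookkeeping on the representative matrices. Recall from \Cref{possibilemma} that type \textit{(A)} means $g$ maps $\Cale$ to $\Calp$ (or to $\Calq$) with both rows and columns representing $\Calp$ (or $\Calq$); type \textit{(B)} means $g$ fixes $\Cale$ as a set, so $g$ is a signed permutation matrix; type \textit{(C)} means $g$ sends $\Cale\to\Calp$ on columns and $\Cale\to\Calq$ on rows (or viceversa). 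The key observation is that the type of $g$ is determined by the permutation $\sigma_g$ that $g$ induces on the three symbols $\{\Cale,\Calp,\Calq\}$: type \textit{(B)} is $\sigma_g=\Id$, while type \textit{(A)} and type \textit{(C)} both involve the transposition swapping $\Cale$ with $\Calp$ or $\Calq$, distinguished by their effect on the complementary pair.

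First I would treat the composition statements. For the first bullet, if $g_1,g_2$ are both of type \textit{(A)}, then each sends $\Cale$ into $\Calp$ (or $\Calq$) and is an involution at the level of $\{\Cale,\Calp,\Calq\}$, so $g_1^{-1}g_2$ induces the identity permutation and is therefore of type \textit{(B)} by the classification. It remains to check that the representative signed permutation matrix has an \emph{even} number of $-1$'s: here I would use that the entries of $g_1,g_2\in\{-1,0,1\}$ (\Cref{possibilemma}) together with the explicit form of $\Calp$ and $\Calq$ (whose coordinates are tabulated in \Cref{puntifissi}), and count signs directly, or better, observe that the determinant condition forces the parity. The third bullet is the analogous computation where one composes a type \textit{(A)} with a type \textit{(C)} element; the swap between $\Calp$ and $\Calq$ introduces the extra sign, giving an \emph{odd} number of $-1$'s. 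The second bullet is the easiest: two signed permutation matrices compose to a signed permutation matrix, and the set of these is visibly closed under products and inverses and contains the identity, hence forms a subgroup $\CB$.

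Next I would dispatch the last bullet, that inverses preserve type. This follows immediately once we note that $\sigma_{g^{-1}}=\sigma_g^{-1}$ as permutations of $\{\Cale,\Calp,\Calq\}$; since every element of $\sigma_g$ arising here is an involution (identity or a transposition), we get $\sigma_{g^{-1}}=\sigma_g$, so $g^{-1}$ has the same type as $g$. One must also confirm that $g^{-1}$ still lands in $\Calc$, but this is already guaranteed because $\Calc$ is a group and inversion preserves the defining condition $g\cdot\Calr\subseteq\Calr$ (with equality, since $\Calr$ is finite and $g$ is injective).

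\textbf{The main obstacle} will be the sign-parity bookkeeping in the first and third bullets: asserting that the ``difference'' signed permutation has an even (resp.\ odd) number of $-1$'s is not automatic from the permutation data alone, since the representative matrix is only determined up to the $\C^*$-scaling, and a global sign flip changes the parity of the count of $-1$'s by the matrix size. I would resolve this by fixing the normalisation used in \Cref{possibilemma} (entries in $\{-1,0,1\}$ with, say, a chosen sign convention on the first nonzero entry of each column), and then verifying the parity claim against the explicit coordinate vectors of $\Calp$ and $\Calq$ from \Cref{puntifissi}. Because $\Calp$ and $\Calq$ are each orthogonal four-tuples with a definite sign pattern, the parity of $-1$'s in the transition matrix between an $\Calp$-frame and a $\Calq$-frame is an invariant I can compute once and for all; the even-versus-odd dichotomy then reflects exactly whether the induced map on $\{\Calp,\Calq\}$ is trivial or the swap, which is precisely the \textit{(A)}-versus-\textit{(C)} distinction.
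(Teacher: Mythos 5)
Your plan hinges on the ``key observation'' that the type of $g$ is read off from the induced permutation $\sigma_g$ of $\{\Cale,\Calp,\Calq\}$ with type \textit{(B)} corresponding to $\sigma_g=\Id$ and types \textit{(A)} and \textit{(C)} both corresponding to transpositions moving $\Cale$. That dictionary is wrong on both counts, and this is a genuine gap. A type \textit{(C)} element induces a \emph{3-cycle} on $\{\Cale,\Calp,\Calq\}$, not a transposition: see \eqref{eq:caseBsequence}, where $g$ sends $\Cale\mapsto\Calp\mapsto\Calq\mapsto\Cale$, consistent with \Cref{orbits} ($\langle g\rangle\cdot\Cale=\Calr$ forces transitivity). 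And a type \textit{(B)} element need only \emph{fix $\Cale$}; it may well swap $\Calp$ and $\Calq$ (e.g.\ $\diag(1,1,1,-1)$, case \textit{(ii)} of \Cref{tab:excaseB}, which interchanges $p_i$ and $q_i$). The correct dictionary is: type \textit{(B)} $\Leftrightarrow$ $\sigma_g$ fixes $\Cale$ (so $\sigma_g\in\{\Id,(\Calp\ \Calq)\}$), type \textit{(A)} $\Leftrightarrow$ $\sigma_g$ is a transposition moving $\Cale$, type \textit{(C)} $\Leftrightarrow$ $\sigma_g$ is a 3-cycle. With your dictionary the argument for the first bullet (``$g_1^{-1}g_2$ induces the identity, hence is of type \textit{(B)}'') does not make sense for type \textit{(C)}, and your argument for the fourth bullet (``every $\sigma_g$ arising is an involution, so $\sigma_{g^{-1}}=\sigma_g$'') is simply false for type \textit{(C)}. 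The fourth bullet is easily repaired the way the paper does it: $\langle g\rangle=\langle g^{-1}\rangle$, so the orbits $\langle g\rangle\cdot\Cale$ and $\langle g^{-1}\rangle\cdot\Cale$ coincide and \Cref{orbits} gives equality of types; no involutivity is needed.

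Once the dictionary is corrected, you should also notice (and flag) that the first and third bullets require care about \emph{which} transposition or 3-cycle is induced: if $g_1$ induces $(\Cale\ \Calp)$ and $g_2$ induces $(\Cale\ \Calq)$ (both type \textit{(A)}), then $g_1^{-1}g_2$ induces a 3-cycle and is \emph{not} of type \textit{(B)}, so the claimed statements hold only when the two elements are compared within the same sub-case (the normalisation the paper adopts throughout, e.g.\ in \Cref{CASEA}); the ``direct check'' the paper invokes is implicitly performed under that convention. On the positive side, your mechanism for the sign-parity claim is the right one: a signed permutation matrix with $k$ entries equal to $-1$ flips the coordinate-sign parity of a point by $k\bmod 2$, and since $\Calp$ (resp.\ $\Calq$) consists exactly of the points of $\Calr\smallsetminus\Cale$ with an odd (resp.\ even) number of negative coordinates (\Cref{puntifissi}), even $k$ preserves $\{\Calp,\Calq\}$ pointwise as sets while odd $k$ swaps them. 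Note also that the ambiguity you worry about under the $\C^{*}$-action is vacuous here: a global sign flip changes $k$ to $4-k$, which has the same parity.
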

\begin{proof}
        The proof of the first three points consists on a direct check, while the fourth point is a direct consequence of \Cref{orbits}.
\end{proof}

\begin{remark}\label{rem:cardinality}
        Notice that the subgroup 
        \begin{equation}
            \Calc_{\mbox{\tiny\textit{(B)}}}=\Set{g\in\Calc|g\mbox{ is of type \textit{(B)}}}.
            \label{eq:Cb}
        \end{equation}
        has cardinality 192 and it contains four copies
        of the \textit{Full Octahedral Group} (see \cite[Section
        11.4]{NORMAN}) acting as the group of symmetries, non necessarily
        preserving the orientation, of the cube with vertices $\Calp\cup\Calq$. Precisely, chosen any coordinate $x_i$, for $i=1,\ldots,4$, the
        Full Ochtahedral Group can be identified with the elements in
        $\Calc_{\mbox{\tiny\textit{(B)}}}$ which preserve the hyperplane
        $\Set{x_i=0}$. In particular, we have
        \begin{equation}
       \Calc_{\mbox{\tiny\textit{(B)}}}=\left\langle
        \begin{bmatrix}
                1&0&0&0\\
                0&1&0&0\\
                0&0&1&0\\
                0&0&0&-1    
        \end{bmatrix},
        \begin{bmatrix}
                1&0&0&0\\
                0&1&0&0\\
                0&0&0&1\\
                0&0&1&0    
        \end{bmatrix},
        \begin{bmatrix}
                1&0&0&0\\
                0&0&1&0\\
                0&1&0&0\\
                0&0&0&1    
        \end{bmatrix},
        \begin{bmatrix}
                0&1&0&0\\
                1&0&0&0\\
                0&0&1&0\\
                0&0&0&1    
        \end{bmatrix}\right\rangle.
            \label{eq:Cbgen}
        \end{equation}
         Notice also that the only possible orders for the elements in $
         \Calc_{\mbox{\tiny\textit{(B)}}}$ are 2,3,4,6.
        Now, as a consequence of \Cref{proprietagruppi}, the
        Cremona-cubes group is the subgroup of $\Pj\Gl(4,\C)$ generated
        by $\Calc_{\mbox{\tiny\textit{(B)}}}$ and the projetivity $g_{0}$ of type
        \textit{(A)}  given in
        \eqref{eq:g0def}.
\end{remark}

As an immediate consequence, we get the following result.

\begin{corollary}[of \Cref{proprietagruppi}] \label{card} There are exactly 192 elements of each type \textit{(A)}, \textit{(B)} and \textit{(C)}.
\end{corollary}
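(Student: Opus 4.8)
The plan is to package the three types into the fibres of a single group homomorphism and then count by cosets. By \Cref{azioneindotta} the group $\Calc$ acts on the three-element set $\Set{\Cale,\Calp,\Calq}$, so there is a homomorphism $\rho\colon\Calc\to\mathrm{Sym}(\Set{\Cale,\Calp,\Calq})\cong S_3$ recording this action. Writing $A$, $B$, $C$ for the sets of elements of type \textit{(A)}, \textit{(B)}, \textit{(C)}, the first step is to observe that, by the orbit description in \Cref{orbits}, the type of $g$ is read off directly from $\rho(g)$: the element $g$ is of type \textit{(B)} exactly when it fixes $\Cale$, i.e. $\rho(g)\in\Set{\Id,(\Calp\,\Calq)}$; it is of type \textit{(A)} exactly when $\rho(g)$ is one of the two transpositions moving $\Cale$, namely $(\Cale\,\Calp)$ or $(\Cale\,\Calq)$; and it is of type \textit{(C)} exactly when $\rho(g)$ is one of the two $3$-cycles. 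Since these three two-element subsets partition $S_3$, their preimages recover the partition $\Calc=A\sqcup B\sqcup C$ of \Cref{orbits}, and each type is the $\rho$-preimage of a subset of cardinality exactly two.

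Next I would identify the kernel and invoke surjectivity of $\rho$. An element lies in $\ker\rho$ iff it fixes each of $\Cale$, $\Calp$, $\Calq$ setwise; by \Cref{possibilemma} these are precisely the signed permutation matrices (type \textit{(B)}) whose sign pattern has an even number of $-1$'s, equivalently those preserving the coordinate product $x_1x_2x_3x_4$ that distinguishes $\Calp$ from $\Calq$. The total-sign character $\CB\to\Set{\pm1}$ is a surjective homomorphism — the generator $\diag(1,1,1,-1)$ of \eqref{eq:Cbgen} is odd — so $\ker\rho$ has index two in $\CB$; since $\abs{\CB}=192$ by \Cref{rem:cardinality}, we get $\abs{\ker\rho}=96$. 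Moreover $\rho$ is surjective: its image contains $\rho(g_0)=(\Cale\,\Calp)$ (as $g_0$ from \eqref{eq:g0def} is of type \textit{(A)}) and $\rho(\diag(1,1,1,-1))=(\Calp\,\Calq)$, and these two transpositions generate $S_3$. Hence every fibre of $\rho$ is a coset of $\ker\rho$ and so has exactly $96$ elements; as each type is a union of exactly two fibres, we conclude $\abs{A}=\abs{B}=\abs{C}=2\cdot 96=192$.

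The only genuine content is the value $\abs{\ker\rho}=96$ together with the surjectivity of $\rho$; everything else is the formal counting of cosets of a normal subgroup. The step most prone to error is the precise bookkeeping matching each type to its pair of permutations — in particular remembering that the transposition $(\Calp\,\Calq)$ belongs to type \textit{(B)} (odd-sign permutations) and not to type \textit{(A)}, and that two type-\textit{(A)} elements lying in different subfamilies (columns representing $\Calp$ versus columns representing $\Calq$) differ by a $3$-cycle rather than by a type-\textit{(B)} element. Equivalently, one can avoid $S_3$ altogether and argue directly from \Cref{proprietagruppi}: left multiplication by the fixed type-\textit{(A)} element $g_0$ carries $\ker\rho$ bijectively onto the subfamily of $A$ with columns in $\Calp$ and carries the odd-sign coset onto a single type-\textit{(C)} coset, exhibiting each type as a disjoint union of two translates of $\ker\rho$.
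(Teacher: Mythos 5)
Your proof is correct. The paper's own derivation is left implicit — \Cref{card} is presented as an immediate consequence of \Cref{proprietagruppi} together with $\abs{\CB}=192$ from \Cref{rem:cardinality}, the point being that type \textit{(B)} is all of $\CB$ while each of types \textit{(A)} and \textit{(C)} is a union of two cosets of the $96$-element even-sign subgroup of $\CB$. You repackage exactly this coset count through the homomorphism $\rho\colon\Calc\to\SSym_3$ induced by \Cref{azioneindotta}, so the underlying arithmetic ($6\times 96$, two fibres per type) is the same; what your route buys is precision at the two places where the paper's shortcut is fragile. First, you verify surjectivity of the action on $\Set{\Cale,\Calp,\Calq}$ explicitly (via $\rho(g_0)=(\Cale\ \Calp)$ and $\rho(\diag(1,1,1,-1))=(\Calp\ \Calq)$), which the paper never states. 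Second, and more importantly, you keep the two subfamilies of types \textit{(A)} and \textit{(C)} in separate fibres: as you note, a literal reading of the first bullet of \Cref{proprietagruppi} would place all type-\textit{(A)} elements in a single coset of the even-sign subgroup and hence yield only $96$ of them, whereas two type-\textit{(A)} elements swapping $\Cale$ with different tetrahedra actually differ by a type-\textit{(C)} element. Your kernel computation (even-sign signed permutation matrices, identified via the sign of the coordinate product, which is well defined projectively since $(-1)^4=1$) and the matching of each type to a two-element subset of $\SSym_3$ through \Cref{orbits} are both sound, and as a bonus the argument delivers $\abs{\Calc}=576$, i.e.\ \Cref{thm:cardinality}, at no extra cost.
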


\begin{theorem}
    \label{thm:cardinality}
    The cardinality of $\Calc $ is
    \begin{equation}
        |\Calc|=576.
        \label{eq:cdim}
    \end{equation}
\end{theorem}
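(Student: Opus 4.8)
The plan is to exhibit $\Calc$ as a disjoint union of the three types of elements described in \Cref{possibilemma} and then add up the three contributions. First I would observe that, by \Cref{possibilemma}, every $g\in\Calc$ is of at least one of the types \textit{(A)}, \textit{(B)}, \textit{(C)}. The crucial point is that these types are \emph{mutually exclusive}, and this follows directly from \Cref{orbits}: the type of $g$ is detected by the cardinality of the orbit $\langle g\rangle\cdot\Cale$, which equals $4$, $8$, or $12$ according to whether $g$ is of type \textit{(B)}, \textit{(A)}, or \textit{(C)} (corresponding to $\langle g\rangle\cdot\Cale$ being $\Cale$, $\Cale\cup\Calp$ or $\Cale\cup\Calq$, and $\Cale\cup\Calp\cup\Calq$ respectively). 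Since these three orbit-sizes are pairwise distinct, no element can simultaneously be of two types, so the three sets of elements form a genuine partition of $\Calc$.

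Next I would simply invoke \Cref{card}, which asserts that each of the three types contains exactly $192$ elements. Combining this count with the disjointness established above yields
\begin{equation}
        |\Calc| = 192 + 192 + 192 = 576 .
\end{equation}

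The substantive work underlying this argument has already been completed upstream: \Cref{rem:cardinality} identifies the type \textit{(B)} elements with the subgroup $\CB$ of order $192$, and \Cref{proprietagruppi} shows that the type \textit{(A)} and type \textit{(C)} elements are obtained from the type \textit{(A)} representative $g_0$ (or its type \textit{(C)} companion) by left multiplication by elements of $\CB$, hence each form a single coset of $\CB$ of size $192$. Consequently the only genuine obstacle in the present statement is the partition claim — that the three types are pairwise disjoint and jointly exhaustive — and this is exactly what the orbit-cardinality trichotomy of \Cref{orbits} supplies, so no additional computation is required.
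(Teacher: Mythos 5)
Your proposal is correct and follows essentially the same route as the paper, which deduces the count directly from \Cref{card} (192 elements of each of the three mutually exclusive types, summing to 576); you simply make explicit the disjointness via the orbit-cardinality trichotomy of \Cref{orbits} and the coset structure from \Cref{proprietagruppi}, details the paper leaves implicit. The paper additionally notes that the cardinality can be verified computationally with \texttt{Macaulay2}, but this is offered only as an alternative check.
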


\begin{proof}
    The statement is a direct consequence of \Cref{card}.  Alternatively,
    one can directly compute the cardinality of $\Calc$ using the
    computer software \texttt{Macaulay2} \cite{M2} with the package
    \texttt{InvariantRing} \cite{INVRING}.
\end{proof}

\begin{remark}
    \label{rem:identification}
    We observe that the Cremona-cubes group $\Calc$ is
    isomorphic to $((A_4\times A_4)\rtimes \Z/2\Z)\rtimes
    \Z/2\Z$, where $A_4<S_4$ is the alternating subgroup in the symmetric group of four elements, i.e. the subgroup consisting of permutations with even order.  This identification is obtained using the function
    \texttt{StructureDescription} of the system for computational
    discrete algebra GAP \cite{GAP4}\footnote{Using the function
    \texttt{IdGroup} we see that  $\Calc$ is the 8654-th finite
    group of order 576 of the finite groups database provided by
    GAP (see  $\texttt{SmallGroupInformation}$ \cite{GAP4}).}.
    On the other hand, in the same way, we have that the subgroup
    $\Calc_{\mbox{\tiny\textit{(B)}}}<\Calc$ is isomorphic $(((\Z/2\Z)^{\times
    4}\rtimes \Z/3\Z)\rtimes\Z/2\Z)\rtimes\Z/2\Z$\footnote{Analogously,
    $\CB$ is the 955-th finite group of
    order 192 of the finite groups database provided by GAP (see
    $\texttt{SmallGroupInformation}$ \cite{GAP4}).}.
\end{remark}

\subsection{The cubes of $\crem_3$}
\label{thecubes}

Let us now explain the origin of the name Cremona-cubes group.
The configuration of the points in $\Calr$ is known in the literature as
the \emph{Reye configuration} or also the \textit{$D_4$ configuration}
(see \cite{DOLGACHEV2,CONFIGURATION}). Alternatively, one can say that the
tetrahedra defined by $\Cale,\Calp$ and $\Calq$ constitute a desmic
triple (see \cite[App. B.5.2]{HuntGeomQuot1996}). 
Explicitly, the
elements of $\Calr$ are the points of a $(12_4\ 16_3)$ configuration,
i.e. 12 points and 16 lines with the property that there are four lines
trough each point and three points on each line.

Let us consider the following set of hyperplanes of $\Pj^3$
\begin{equation}
    \mathcal H =\Set{H_{ij}^\bullet| 1\le i<j\le 4,\ \bullet\in\{+,-\}},
    \label{eq:Hplanes}
\end{equation}
where $H^\bullet_{ij}=\Set{[x_1:x_2:x_3:x_4]\in\Pj^3|x_i=\bullet
x_j}$. Let us now focus on some affine chart
$U_{\overline{\imath}}=\set{x_{\overline{\imath}}\not=0}$, for some
$\overline{\imath}=1,\ldots,4$, with the usual affine coordinates
\begin{equation}
    X_j=\frac{x_j}{x_{\overline{\imath}}}, \quad j\in\Set{1,2,3,4}\smallsetminus\Set{\overline{\imath}}
    \label{eq:Xjdef}
\end{equation}
and denote by $\overline{\mathcal H}$ the following set of affine hyperplanes: 
\begin{equation}
    \overline{\mathcal H}=\Set{H\cap U_{\overline{\imath}} | H\in\mathcal{H}}.
    \label{eq:Hbardef}
\end{equation}

\begin{figure}[htb]
	\centering
    \tdplotsetmaincoords{70}{120}
    \begin{tikzpicture}[scale=1.5,tdplot_main_coords]
        \draw[color=black,thick,dashed] (0,0,0) -- (1,0,0);
        \draw[color=black,thick,->] (1,0,0) -- (4,0,0) node[anchor=north east,red]{$e_2$};
        \draw[color=black,thick,dashed] (0,0,0) -- (0,1,0);
        \draw[color=black,thick,->] (0,1,0) -- (0,4,0) node[anchor=north west,red]{$e_3$};
        \draw[color=black,thick,dashed] (0,0,0) -- (0,0,1);
        \draw[color=black,thick,->] (0,0,1) -- (0,0,3) node[anchor=south,red]{$e_4$};

        \coordinate (p1) at (-1,-1,-1);
        \coordinate (p2) at (-1,1,1);
        \coordinate (p3) at (1,-1,1);
        \coordinate (p4) at (1,1,-1);
        
        \coordinate (q1) at (-1,-1,1);
        \coordinate (q2) at (-1,1,-1);
        \coordinate (q3) at (1,-1,-1);
        \coordinate (q4) at (1,1,1);
                
        \draw[blue,thick] (q1)--(p2)--(q4)--(p3)--cycle;
        \draw[blue,thick] (q4)--(p4)--(q2)--(p2);
        \draw[blue,thick] (p3)--(q3)--(p4);
        \draw[blue,thick,dashed] (q3)--(p1)--(q2);
        \draw[blue,thick,dashed] (p1)--(q1);
        
        \node[blue,anchor=south west] at (p1) {$p_1$};
        \draw[fill = blue,draw=blue] (p1) circle (1pt);
        \node[blue,anchor=south west] at (p2) {$p_2$};
        \draw[fill = blue,draw=blue] (p2) circle (1pt);
        \node[blue,anchor=south] at (p3) {$p_3$};
        \draw[fill = blue,draw=blue] (p3) circle (1pt);
        \node[blue,anchor=north] at (p4) {$p_4$};
        \draw[fill = blue,draw=blue] (p4) circle (1pt);
        
        \node[blue,anchor=south west] at (q1) {$q_1$};
        \draw[fill = blue,draw=blue] (q1) circle (1pt);
        \node[blue,anchor=south west] at (q2) {$q_2$};
        \draw[fill = blue,draw=blue] (q2) circle (1pt);
        \node[blue,anchor=north] at (q3) {$q_3$};
        \draw[fill = blue,draw=blue] (q3) circle (1pt);
        \node[blue,anchor=south] at (q4) {$q_4$};
        \draw[fill = blue,draw=blue] (q4) circle (1pt);
        
        \node[red,anchor=south west] at (0,0,0) {$e_1$};
        \draw[fill = red,draw=red] (0,0,0) circle (1pt);
        \draw[fill = red,draw=red] (4,0,0) circle (1pt);
        \draw[fill = red,draw=red] (0,4,0) circle (1pt);
        \draw[fill = red,draw=red] (0,0,3) circle (1pt);
    \end{tikzpicture}  
    
	\caption{The configuration, in the chart $\{x_1\not=0\}\subset \Pj^3$, of the points in $\Calr$.}
	\label{CUBE}
\end{figure}

There are twelve planes in $\overline{\mathcal{H}}$. Six of them are
faces of the  cube with vertices $\Calp\cup \Calq$ and the remaining
six are symmetry planes of the cube containing pairs of parallel lines.
Using these planes, an alternative description of the Reye configuration is given
by Hilbert in terms in terms of sextuples of co-planar points \cite{HilbertCV1952}.

Now, a direct check shows that the Cremona-cubes group acts (with trivial
stabilizer) on the set $\overline{\mathcal H}$. This observation provide
a different point of view which allows one to work in an affine setting.

Another way to understand the action of $\Calc$ is to look at the three
cubes $C_{\Cale\Calp}$, $C_{\Cale\Calq}$ and $C_{\Calp\Calq}$ which
have vertices respectively $\Cale\cup \Calp$, $\Cale\cup \Calq$ and
$\Calp \cup \Calq$. While the group $\Calc_{\mbox{\tiny\textit{(B)}}}$
acts by just swapping $C_{\Cale\Calp}$ and $C_{\Cale\Calq}$ (and keeping
$C_{\Calp\Calq}$ fixed), the Cremona-cubes group $\Calc$ permutes the
three cubes.

In what follows, the two net of quadrics respectively generated by $\Cale \cup \Calp $ and $\Cale\cup \Calq$ will play a fundamental r\^ole.
\begin{definition}\label{def:nets}
    We will call $\Sigma_{\Calp}$ and $\Sigma_{\Calq}$ the nets of
    quadrics respectively generated by ${\Cale\cup\Calp}$, ${\Cale\cup\Calq}$. Precisely, we have:
    \begin{subequations}
        \begin{align}
			\label{eq:web1} 
			\Sigma_{\Calp} &=
			\Set{S_{\alpha,\beta,\gamma}\subset \Pj^3 | [\alpha:\beta:\gamma]\in\Pj^2,\  S_{\alpha,\beta,\gamma}=\Set{\alpha S_1^{(+)}+ \beta S_2^{(+)} +\gamma S_3^{(+)}=0 }},
			\\
			\label{eq:web2} 
			\Sigma_{\Calq} &=
			\Set{S_{\alpha,\beta,\gamma}\subset \Pj^3 | 
				[\alpha:\beta:\gamma]\in\Pj^2,\ S_{\alpha,\beta,\gamma}=\Set{\alpha S_1^{(-)}+ \beta S_2^{(-)} +\gamma S_3^{(-)}=0 }}
			,
        \end{align}    
    \end{subequations}
    where:
    \begin{equation}
            \label{eq:quadricpm}
            S_1^{(\pm)} = x_1 x_2\pm x_4 x_3,
            \quad
            S_2^{(\pm)} = x_1 x_3 \pm x_4 x_2,
            \quad 
            S_3^{(\pm)} = x_1 x_4 \pm x_2 x_3.
    \end{equation}
\end{definition}

\begin{remark}\label{nopointperS}
    Note that the projective subspaces (planes) $\Sigma_{\Calp},
    \Sigma_{\Calq}, \subset\Pj H^0(\Pj^3,\Calo_{\Pj^3}(2))\cong\Pj^9$
    do not intersect, i.e.
    \begin{equation}
        \label{eq:intersection}
        \Sigma_{\Calp} \cap \Sigma_{\Calq}= \emptyset.
    \end{equation}
    In other words there is no quadric passing trough all the points
    in $\Calr$.
\end{remark} 

We will see in \Cref{sec:covariants,sec:invariants} that these nets
are key tools in the computation of the invariants of the maps of the
form $g\circ\crem_3$ for $g\in \Calc$.

\begin{remark}\label{desmicclassic}
    In what follows we shall consider the pencil of
    quartics having nodes precisely at $\Calr$ (see in
    \Cref{sss:caseBinvariants,sss:casseCinva}). Its existence is a
    classical fact (see \cite[App. B.5.2]{HuntGeomQuot1996}) and it is
    called \emph{desmic pencil}. In particular, a desmic pencil  contains
    exactly three reducible members called \emph{desmic quartics}.
\end{remark}

\section{The spaces of initial values and the algebraic entropy}
\label{sec:entropy}

In this section we study the confinement of the indeterminacies
of the maps $\Phi\in\Bir(\Pj^3)$ of the form $\Phi=g\circ \crem_3$, for
$g\in\Calc$, via some space of initial values. We will prove in
\Cref{prop:caseAgrowth,prop:caseC,prop:caseBgrowth} that the growth
behaviour of $d_n^\Phi$ only depends on the type
of $g\in\Calc$. The same result is also true for the maps of the form
$\Psi=\crem_3\circ g$. Indeed, the birational transformations $\Phi =
g\circ \crem_3$ and $\Psi=\crem_3\circ g$ have the same algebraic entropy
because they are conjugated via $g^{-1}$ which is of the same type of $g$ as per \Cref{proprietagruppi} (see \Cref{rem:algent}).

The results contained in this section can be seen either directly,
or using the approach described in \cite{BedfordKim2004} for maps $\Phi$
coming from the composition of the Cremona map $\crem_M$  \eqref{eq:C3}
with projectivities using
the method of \emph{singular orbits}, i.e. sequences of the form 
\begin{equation} \label{eq:patternkim}
    \begin{tikzcd}
    \Set{x_i=0} \arrow{r}{\Phi} & {m}_1 \arrow{r}{\Phi} &
    \cdots\arrow{r}{\Phi}& m_k\arrow{r}{\Phi}& e_j
\end{tikzcd}
\end{equation}
where $1\le i,j \le 4$, $k\ge 0$, $m_1,\ldots,m_k \in \Pj^M$. Notice
that \eqref{eq:patternkim} is a special instance of \eqref{eq:singpatt}.
We remark that the authors in \cite[Theorem A.1]{BedfordKim2004} gave the
general expression of the matrix representing the action in cohomology
induced by a map of the form $\ell \circ \crem_M$ with with $\ell \in
\PGl(M+1,\C)$, with singular orbits of prescribed form. In general,
this gives an estimate of the degree growth, and for some choices
of the matrix $\ell$ it allows direct computations. In this paper,
we compute explicitly the sequence of degrees $\Set{d_{n}}_{n\in\N}$
using induction on the action induced in cohomology over a generic plane
$H\subset \Pj^{3}$ of map $\Phi = g\circ \crem_3$, $g\in\Calc$. For
instance, this approach provides us formulas \eqref{eq:dnen} leading to
equation \eqref{eq:entropyB} in \Cref{prop:caseBgrowth}.

\setcounter{subsection}{-1}
\subsection{The case of the standard Cremona transformation} 

We start by reviewing the case $\Phi=\crem_3$
(see. \cite{CarsteaTakenawa2019JPhysA} for more details).  Let
$\overline{\varepsilon} \colon \overline{B}\to \Pj^{3}$ be the blowup of $\Pj^3$ with center
the set of coordinate points. Let us denote by $E_{i}$
the exceptional divisor over the coordinate point $e_{i}$, for
$i=1,\ldots,4$. Then, one can choose (see \cite[Section 4.6.2]{GRIHARR})
the following basis of the second singular cohomology group of $\overline{B}$:
\begin{equation}
	\label{eq:H2cremona} H^{2}( \overline{B},\Z ) = \langle
	\varepsilon^*H,E_1,E_2,E_3,E_4\rangle_{\Z},
\end{equation} 
where $H$ is the class of an hyperplane in $\Pj^3$ and, with abuse
of notation, we have denoted by the same symbols the exceptional
divisors $E_i$, for $i=1,\ldots,4$, and their cohomology classes.
Moreover, the action of the standard Cremona transformation on the
second cohomology group $H^{2}(\overline{B},\Z)$ is expressed, in terms of the
basis \eqref{eq:H2cremona}, by the following matrix
\begin{equation}
    \label{CREMONACTION} 
(\crem_3^{-1})^* =   \crem_3^*=
    \begin{pmatrix} 
        3 & 1 & 1 & 1 & 1
        \\
	-2 & 0 & -1 & -1 & -1
        \\
        -2 & -1 & 0 & -1 & -1
        \\
        -2 & -1 & -1 & 0 & -1
        \\ 
        -2 & -1 & -1 &- 1 & 0
    \end{pmatrix}
\end{equation} 
see for instance \cite{CarsteaTakenawa2019JPhysA} 
or \cite[Eq. (3.1)]{BedfordKim2004} evaluated at $d=3$.

\begin{remark}
    Since $\Calr\smallsetminus\Cale = \Fix \crem_3$, also $\Bl_{\Calr}\Pj^3$ 
    is a space of initial values for $\crem_3$, but it is superfluous 
    to blowup all $\Calr$.
\end{remark}

We divide the study of the confinement of the indeterminacies accordingly
to the possible types of $g$, namely \textit{(A)}, \textit{(B)} or
\textit{(C)}.

From now on, given any $g\in\Calc$, we will denote by
$\varepsilon_g:B_g\rightarrow \Pj^3$ the blowup of $\Pj^3$ with center
the finite set $\langle g \rangle\cdot \Cale$. Notice that, since $B_g$
is the blowup of $\Pj^3$ with center an $n$-tuple of distinct reduced
points, the class of its canonical divisor is
\begin{equation}
    \label{anticanonical}
    -K_{B_g}=4\varepsilon_{g}^*H-2 \underset{a\in\langle g\rangle\cdot \Cale}{\sum}D_a,
\end{equation}
where $D_a$ is the exceptional divisor over the point $a\in\langle g \rangle\cdot \Cale$ (see \cite[Section 1.4.2]{GRIHARR}).

Sometimes, with abuse of notation we will denote by
$\Phi:B_g\dashrightarrow B_g$ the natural lift of a projective map
$\Phi\in\Bir(\Pj^3)$ to the variety $B_g$. Moreover, when no ambiguity
is present, we will denote by the same symbol a divisor and its
cohomology class.

\begin{notation}
    In what follows, given a birational map $\Phi\in\Bir(X)$, for $X$
    smooth projective variety, we will denote by $\Phi_{*}$ the linear
    operator
    \begin{equation}
        \Phi _*=(\Phi^{-1})^*\in\Gl(H^2(X,\Z)).
        \label{eq:pushforward}
    \end{equation}
\end{notation}

Before proceeding, we recall that, if $\varepsilon\colon B\rightarrow \Pj^3$
is a space of initial values for a birational map $\Phi\in\Bir(\Pj^3)$,
then the degree $d_{n}^{\Phi}$ of the $n$-th iterate  is given by formula
\eqref{eq:dncoeff} (see \Cref{sec:intro}).

\subsection{Case \textit{(A)}}\label{CASEA}

Let $g\in\Calc$ be a projectivity of type \textit{(A)}. Without loss of generality,
we can suppose that $g$ acts  on the set
$\{\Cale,\Calp,\Calq\}$ by swapping $\Cale $ and $\Calp$ (see \Cref{azioneindotta}). Recall  that, in this case, 
the blowup $B_g$ of $\Pj^3$ with center $\langle g \rangle\cdot \Cale=
\Cale\cup \Calp$ is a space of initial values (see
\Cref{orbits}). Let us fix the following basis of the second cohomology
group of $B_{g}$: \begin{equation}
	\label{eq:H2caseA} H_{\mbox{\tiny\textit{(A)}}}^2 := H^{2}\left(
	B_{g},\Z \right) = \langle \varepsilon_{g}^*H,E_1,E_2,E_3,E_4,P_1,P_2,P_3,P_4\rangle_{\Z},
\end{equation} where, as above, $H$ is the class of an hyperplane in
$\Pj^3$, $E_i$ is the cohomology class of the exceptional divisor over
the point $e_i$, for $i=1,\ldots,4$, and $P_i$ is the cohomology class
of the exceptional divisor over the point $p_i$, for $i=1,\ldots,4$. We
want to compute the action induced by $\Phi=g\circ \crem_3$ on the
cohomology group $ H_{\mbox{\tiny\textit{(A)}}}^2$. Equivalently, we
want to compute the matrix representing $\Phi_*$ with respect to the
basis \eqref{eq:H2caseA}.

First, notice that the action of the standard Cremona transformation on
$\varepsilon_{g}^*H, E_1, \ldots, E_4$ is given by the matrix
\eqref{CREMONACTION} while the elements $P_i $ are fixed by ${\crem_3}_*$
because they lie over the points $p_i$ which are fixed by $\crem_3$.

Since $g$ sends hyperplanes to hyperplanes and it swaps the sets $\Cale$
and $\Calp$, the matrix representing $g_*$ with respect to the basis
\eqref{eq:H2caseA} has the following block decomposition: \begin{equation}
	\label{blocks}
	g_*=\begin{pmatrix}
		\begin{tikzpicture}
			\draw (0,0)--(0,-3); \draw (1.1,0)--(1.1,-3); \draw
			(-0.5,-0.5)--(2.25,-0.5); \draw (-0.5,-1.75)--(2.25,-1.75);
			\node at (-0.25,-0.25) {\small 1};
			
			\node at (-0.25,-0.75) {\small 0}; \node at (-0.25,-1) {\tiny
				$\vdots$}; \node at (-0.25,-1.5) {\small 0};
			
			\node at (-0.25,-2) {\small 0}; \node at (-0.25,-2.25) {\tiny
				$\vdots$}; \node at (-0.25,-2.75) {\small 0};
			
			\node at (0.25,-0.25) {\small 0}; \node at (0.6,-0.25) {\small
				$\cdots$}; \node at (0.95,-0.25) {\small 0};
			
			\node at (1.3,-0.25) {\small 0}; \node at (1.65,-0.25) {\small
				$\cdots$}; \node at (2,-0.25) {\small 0};
			
			\node at (0.5,-1.1) {\Huge $0$};
   \node at (0.6,-2.4) {\Huge $M_1$};
			\node at (1.7,-1.1) {\Huge $M_2$}; \node at (1.5,-2.4) {\Huge $0$};
			
		\end{tikzpicture}
\end{pmatrix} \end{equation} where $M_1$ and $M_2$ are $4\times 4$ permutation
matrices.
In terms of singular orbits \cite{BedfordKim2004} this implies that 
there are four closed singular orbits of length two.

We finally obtain the action of $\Phi_{*}$ on
$H_{\mbox{\tiny\textit{(A)}}}^2$ as the composition $\Phi_*=g_*\circ
{\crem_3}_*$. Precisely, there exist two permutations $\sigma_1,\sigma_2$
in the symmetric group $\SSym_4$ of four elements $\{1,2,3,4\}$ such
that \begin{equation}
	\label{eq:actionA} \begin{tikzcd}[row sep=tiny] \varepsilon_{g}^{*}H
		\arrow[mapsto]{r}{\Phi_*} & 3 \varepsilon_{g}^{*}H -
		2\ssum{j=1}{4} P_{j},\\
  E_{i} \arrow[mapsto]{r}{\Phi_*}
		&\varepsilon_{g}^* H-\ssum{j\neq i}{}P_{\sigma_{1}(j)}&\mbox{for }i=1,\ldots,4,\\
		P_{i} \arrow[mapsto]{r}{\Phi_*}& E_{\sigma_{2}(i)}&\mbox{for }i=1,\ldots,4.
	\end{tikzcd}
\end{equation} Notice that $\sigma_1$ (resp. $\sigma_2$) corresponds to
the block $M_1$ (resp. $M_2$).
The same action can be recovered from the four singular orbits using \cite[Eqs. (4.1,4.3)]{BedfordKim2004}.

Then, we obtain the main result of this subsection.
\begin{proposition}
    \label{prop:caseAgrowth} Consider the birational map  $\Phi=g\circ \crem_{3} \in\Bir (\Pj^{3})$ for $g\in\Calc$ of type \textit{(A)}. Then, the following formula is true for all $n\in\N$:
    \begin{equation}
        \label{eq:caseAphinH} 
        (\Phi_{*})^{n}( \varepsilon_{g}^{*}H ) 
        = 
        \left( 2n^{2}+1 \right) \varepsilon_{g}^{*}H  - n\left( n-1
        \right)\sum_{j=1}^{4} E_{j}-
        n\left( n+1 \right)\sum_{j=1}^{4} P_{j}.
    \end{equation} 
    As a consequence, we have $d_n^\Phi=2n^2+1$, that is the map $\Phi$
    is \emph{integrable} according to the algebraic entropy.
\end{proposition}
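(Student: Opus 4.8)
The plan is to prove the displayed identity by induction on $n$, using the explicit action of $\Phi_*$ on the basis \eqref{eq:H2caseA} recorded in \eqref{eq:actionA}, and then to read off the degree from the coefficient of $\varepsilon_g^*H$ via \eqref{eq:dncoeff}. Since $B_g$ is a space of initial values for $\Phi$ (see \Cref{orbits}), the lifted map is algebraically stable, so that $(\Phi_*)^n=(\Phi^n)_*$ and $d_n^\Phi$ is precisely the coefficient of $\varepsilon_g^*H$ in $(\Phi_*)^n(\varepsilon_g^*H)$. Thus everything reduces to understanding the $\Phi_*$-orbit of $\varepsilon_g^*H$.

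The observation that makes the induction close is that this orbit lives in the three-dimensional subspace spanned by $h:=\varepsilon_g^*H$, $E:=\sum_{j=1}^4 E_j$ and $P:=\sum_{j=1}^4 P_j$. Indeed, although the individual images $\Phi_*(E_i)$ and $\Phi_*(P_i)$ in \eqref{eq:actionA} involve the permutations $\sigma_1,\sigma_2\in\SSym_4$ (which depend on $g$), these permutations merely reorder summands once one passes to the totally symmetric combinations $E$ and $P$. Summing \eqref{eq:actionA} over $i$, and using $\sum_i\sum_{j\neq i}P_{\sigma_1(j)}=\sum_i(P-P_{\sigma_1(i)})=3P$ together with $\sum_i E_{\sigma_2(i)}=E$, I find that $\Phi_*$ acts on $\langle h,E,P\rangle$ by $h\mapsto 3h-2P$, $E\mapsto 4h-3P$, $P\mapsto E$. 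In particular this subspace is $\Phi_*$-invariant, and the entire problem is encoded in this $3\times 3$ block, free of the permutations $\sigma_i$.

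From here the argument is routine. Writing $(\Phi_*)^n(h)=a_n h-b_n E-c_n P$ and applying the three rules above gives $(\Phi_*)^{n+1}(h)=(3a_n-4b_n)h-c_nE+(-2a_n+3b_n)P$, hence the recurrences $a_{n+1}=3a_n-4b_n$, $b_{n+1}=c_n$, $c_{n+1}=2a_n-3b_n$ with $(a_0,b_0,c_0)=(1,0,0)$. I would then check directly that the candidate closed forms $a_n=2n^2+1$, $b_n=n(n-1)$, $c_n=n(n+1)$ satisfy these recurrences and the initial conditions, which is a short computation (e.g. $3a_n-4b_n=6n^2+3-4n^2+4n=2(n+1)^2+1$). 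Reading off $a_n$ yields $d_n^\Phi=2n^2+1$; since this grows quadratically, hence subexponentially, \Cref{def:algent} and \Cref{rem:algent} give $S_\Phi=0$, so $\Phi$ is integrable according to the algebraic entropy.

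The point to get right—rather than a genuine obstacle—is the reduction to the symmetric subspace: one must notice that the $\sigma_i$ are irrelevant for the quantity of interest, so no information about the specific permutations is needed, and both the invariance of $\langle h,E,P\rangle$ and the closed form become transparent. Had one instead tracked the individual classes $E_i,P_i$, the computation would depend on $\sigma_1,\sigma_2$ and would not obviously stabilise. The only other thing to confirm is that algebraic stability genuinely holds so that \eqref{eq:dncoeff} applies, which is exactly the content of $B_g$ being a space of initial values, established in \Cref{orbits}.
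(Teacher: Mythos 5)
Your proposal is correct and follows essentially the same route as the paper: both arguments pass to the totally symmetric classes $\sum_j E_j$ and $\sum_j P_j$ (on which the permutations $\sigma_1,\sigma_2$ act trivially), compute $\Phi_*\bigl(\sum_j E_j\bigr)=4\varepsilon_g^*H-3\sum_j P_j$ and $\Phi_*\bigl(\sum_j P_j\bigr)=\sum_j E_j$, and close an induction on $n$ to obtain \eqref{eq:caseAphinH}, reading off $d_n^\Phi=2n^2+1$ from the coefficient of $\varepsilon_g^*H$ via \eqref{eq:dncoeff}. Your reformulation as a three-term linear recurrence with a verified closed form is just a repackaging of the paper's inductive step, so there is nothing substantive to add.
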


\begin{proof} 
    First notice that the following formulas are a direct
    consequence of \Cref{eq:actionA}: 
    \begin{subequations}
        \label{evalA} \begin{align}
        \Phi_{*}\left(\ssum{j=1}{4}E_j\right)
                    & =4\varepsilon_{g}^*H-3\ssum{j=1}{4}P_j,\\
                    \Phi_{*}\left(\ssum{j=1}{4}P_j\right) &=
                    \ssum{j=1}{4}E_{\sigma_2(j)}=\ssum{j=1}{4}E_{j}
                    .
            \end{align}
    \end{subequations} Now the proof goes by induction on $n\in\N$. The case
    $n=0$ is trivial, while the case $n=1$ follows from \Cref{eq:actionA}.
    We move now to the proof of the inductive step. Suppose that
    formula \eqref{eq:caseAphinH} holds true for some $n\in\N$, we want to
    prove that it is also true for $n+1$. This can be shown as follows:
    \begin{equation}
        \begin{aligned}
            (\Phi_{*})^{n+1}( \varepsilon_{g}^{*}H )
            &=\Phi_*\left[\left(\Phi_*\right)^n\left(\varepsilon_{g}^*H\right)\right]
            \\ 
            &=
            \Phi_* \left[\left( 2n^{2}+1 \right) \varepsilon_{g}^{*}H  - n\left( n-1
        \right)\sum_{j=1}^{4} E_{j}-
        n\left( n+1 \right)\sum_{j=1}^{4} P_{j}\right]
            \\ 
            &=
            \left( 2n^{2}+1\right)\left(3\varepsilon_{g}^*H-2\ssum{j=1}{4}P_j\right) - n\left( n-1 \right)\ssum{j=5}{8}
            \left(4\varepsilon_{g}^*H-3\ssum{j=1}{4}P_j\right)
            \\
            &
            \phantom{\left( 2n^{2}+1\right)\left(3\varepsilon_{g}^*H-2\ssum{j=1}{4}E_j\right)}
            \quad
            - n\left( n+1 \right)\ssum{j=1}{4} E_{j}            
            \\ 
            &= \left[
            2\left( n+1 \right)^{2}+1 \right] \varepsilon_{g}^{*}H - n\left(
            n+1 \right)\ssum{j=1}{4} E_{j} - \left( n+1
            \right)\left( n+2 \right)\ssum{j=1}{4} P_{j},
        \end{aligned}
        \label{eq:caseAcomputations}
    \end{equation}
    where the third equality is a consequence of \Cref{evalA}.
    From \Cref{rem:algent}, since $d_{n}^{\Phi}$ is subexponential,
    we have $S_{\Phi}=0$, and hence the statement follows.
\end{proof}

\subsection{Case \textit{(B)}}\label{subsec:case3}

In this case, a space of initial values is $B_g=\Bl_{\Cale}\Pj^3$
and we can chose as basis of $H^2(B_g,\Z)$ the one given in
\Cref{eq:H2cremona}. Recall that there is a matrix representing $g$ which is a
permutation matrix with signs. Since we will focus on the action induced
on the second singular cohomology group of $B_g$ by the map $\Phi =
g\circ \crem_3$, it is enough to study the action on set $\Cale$. Thus,
it is enough to suppose that $g$ is a permutation matrix in the usual
sense. As a consequence, the matrix representing $g_*$ in our basis
is a permutation matrix with $\varepsilon_{g}^*H$ as eigenvector, i.e.
\begin{equation}
	g_*=\begin{pmatrix}
		\begin{tikzpicture}
			\draw (0,0)--(0,-1.75); \draw (-0.5,-0.5)--(1.1,-0.5);
			
			\node at (-0.25,-0.25) {\small 1};
			
			\node at (-0.25,-0.75) {\small 0}; \node at (-0.25,-1) {\tiny
				$\vdots$}; \node at (-0.25,-1.5) {\small 0};
			
			\node at (0.25,-0.25) {\small 0}; \node at (0.6,-0.25) {\small
				  $\cdots$}; \node at (0.95,-0.25) {\small 0};
			
			\node at (0.5,-1.1) {\Huge $M$};
		\end{tikzpicture}
	\end{pmatrix}
\end{equation}
where $M$ is a $4\times 4$ permutation matrix.  
In terms of singular orbits \cite{BedfordKim2004} this implies that 
there are four closed singular orbits of length one.

After composing $g_*$ with \Cref{CREMONACTION}, we obtain the action of
$\Phi_{*}$ on $H_{\mbox{\tiny\textit{(B)}}}^2$: \begin{equation}
	\label{eq:actionC} \begin{tikzcd}[row sep=tiny] \varepsilon_{g}^{*}H
		\arrow[mapsto]{r}{\Phi_*} & 3 \varepsilon_{g}^{*}H - 2\ssum{j=1}{4}
		E_{j},\\
		E_i \arrow[mapsto]{r}{\Phi_*}& \varepsilon_{g}^{*} H - \ssum{j\neq
			i}{} E_{\sigma(j)}&\mbox{for }i=1,\ldots,4,
	\end{tikzcd}
\end{equation} where $\sigma $ is the element in $\SSym_4$ corresponding
to the matrix $M$.
The same action can be recovered from the four singular orbits using \cite[Eqs. (4.1,4.3)]{BedfordKim2004}.

As a consequence, we have the main result of this subsection.

\begin{proposition}
    \label{prop:caseC}
    Let $g\in \Calc$ be an element of type \textit{(B)}. Then, for the map $\Phi=g\circ\crem_3 \in\Bir
    (\Pj^{3})$, the following formulas hold true
    for all $n\in\N$:
    \begin{subequations}
            \label{eq:caseCnthiter} \begin{align}
                    \label{eq:caseCHnthiter}
                    \left(\Phi_{*}\right)^{n}(\varepsilon_{g}^{*}H )
                    &= \begin{cases} 3 \varepsilon_{g}^{*}H - 2\ssum{j=1}{4}
                            E_{j} & \text{for $n$ odd}, \\ \varepsilon_{g}^{*}H &
                            \text{for $n$ even}, \end{cases} \\ \label{eq:caseCEinthiter}
                    \left(\Phi_{*}\right)^{n}\left(E_{i}\right) &= \begin{cases}
                            \varepsilon_{g}^{*} H - \ssum{j\neq \sigma^{n-1}(i)}{}
                            E_{\sigma^{n}(j)} & \text{for $n$ odd}, \\ E_{\sigma^{n}(i)}
                            & \text{for $n$ even},
                    \end{cases}&\mbox{for }i=1,\ldots,4.
            \end{align}
    \end{subequations}
    So, the map $\Phi$ is \emph{periodic} (see \Cref{def:integrability}), and hence integrable, with two-periodic degrees: 
    \begin{equation}
	\label{eq:dncaseC} 
        d_{n}^\Phi =
	\begin{cases} 3 & \text{for $n$ odd}, \\ 1 & \text{for $n$ even}.
	\end{cases}
    \end{equation}
\end{proposition}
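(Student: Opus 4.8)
The plan is to compute the $\Z$-linear operator $\Phi_*$ on $H^2(B_g,\Z)$ once and for all, and then to observe that its \emph{square} is a permutation of the chosen basis \eqref{eq:H2cremona}; everything else follows formally. Since $B_g=\Bl_{\Cale}\Pj^3$ is a space of initial values for $\Phi$ (so that the lift $\widetilde\Phi$ is algebraically stable), formula \eqref{eq:dncoeff} reduces the degree count to reading off the coefficient of $\varepsilon_g^{*}H$ in $(\Phi_*)^{n}\varepsilon_g^{*}H$, which is what I will ultimately extract.

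First I would record the starting data \eqref{eq:actionC}, rewriting the action on the exceptional classes in the cleaner bijective form $\Phi_*(E_i)=\varepsilon_g^{*}H-\sum_{k\neq\sigma(i)}E_k$, using that $\sigma\in\SSym_4$ is a bijection so that $\Set{\sigma(j)\mid j\neq i}=\Set{1,\dots,4}\setminus\Set{\sigma(i)}$. The key step is then the direct computation of $\Phi_*^2$. Applying $\Phi_*$ twice to $\varepsilon_g^{*}H$ and expanding, the contributions $9\varepsilon_g^{*}H-6\sum_j E_j$ and $-2(4\varepsilon_g^{*}H-3\sum_j E_j)$ cancel to give $\Phi_*^2(\varepsilon_g^{*}H)=\varepsilon_g^{*}H$; applying $\Phi_*$ twice to $E_i$ gives, after the double-sum bookkeeping, $\Phi_*^2(E_i)=E_{\sigma^2(i)}$. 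Thus $\Phi_*^2$ fixes $\varepsilon_g^{*}H$ and acts on $E_1,\dots,E_4$ as the permutation $\sigma^2$; in particular $\Phi_*$ has finite order, which already morally explains the two-periodicity.

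With this in hand the induction is immediate. From $\Phi_*^2(E_i)=E_{\sigma^2(i)}$ one gets $(\Phi_*)^{2m}(E_i)=E_{\sigma^{2m}(i)}$ and $(\Phi_*)^{2m}(\varepsilon_g^{*}H)=\varepsilon_g^{*}H$, which is the even case of \eqref{eq:caseCnthiter}; composing once more with $\Phi_*$ via \eqref{eq:actionC} gives the remaining odd case. Finally, by \eqref{eq:dncoeff} the degree $d_n^\Phi$ equals the coefficient of $\varepsilon_g^{*}H$ in $(\Phi_*)^{n}\varepsilon_g^{*}H$, namely $1$ for $n$ even and $3$ for $n$ odd; since this sequence is two-periodic the map is periodic in the sense of \Cref{def:integrability}, and \emph{a fortiori} $S_\Phi=0$ by \Cref{rem:algent}, so $\Phi$ is integrable.

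The only delicate point, and essentially the only obstacle, is the evaluation of the double sum $\sum_{k\neq\sigma(i)}\sum_{l\neq\sigma(k)}E_l$ occurring in $\Phi_*^2(E_i)$. One must count, for each index $l$, how many pairs $(k,l)$ contribute, distinguishing the single value $l=\sigma^2(i)$ (which arises with multiplicity three, since there the two exclusion conditions on $k$ coincide) from the remaining values (multiplicity two). Getting these multiplicities exactly right is precisely what forces the $\varepsilon_g^{*}H$ and $\sum_j E_j$ terms to cancel and leaves the single class $E_{\sigma^2(i)}$; this is the heart of the argument.
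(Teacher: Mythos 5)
Your proposal is correct and follows essentially the same route as the paper: both reduce the claim to the single computation that $\Phi_*^2$ fixes $\varepsilon_g^{*}H$ and permutes the $E_i$ as $\sigma^2$, then iterate and read off the degree via \eqref{eq:dncoeff}. The only cosmetic difference is that the paper sidesteps your double-sum multiplicity count by first evaluating $\Phi_*$ on the combination $\sum_{j}E_j$ and writing $\sum_{j\neq i}E_{\sigma(j)}=\sum_{j}E_j-E_{\sigma(i)}$, which yields the same cancellations.
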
 

\begin{proof}
    As in the proof of \Cref{prop:caseAgrowth} 
    we proceed by evaluating the action of $\Phi_{*}$ on the combination
    $\ssum{j=1}{4}E_j$. 
    From \eqref{eq:actionC}, we have:
    \begin{equation}
        \Phi_{*}\left( \ssum{i=1}{4}E_j \right) =
        4 \varepsilon_{g}^{*} H - 3\ssum{j=1}{4}E_{j},
        \label{eq:caseCsumEj}
    \end{equation}
   which, again from \eqref{eq:actionC} implies:
    \begin{equation}
        \begin{aligned}
            \left(\Phi_{*}\right)^{2}(  \varepsilon_{g}^{*} H )
            &=
            \Phi_{*}\left(  3 \varepsilon_{g}^{*}H - 2\ssum{j=1}{4}
            E_{j}\right)
            \\
            &= 3 
            \left( 3 \varepsilon_{g}^{*} H - 2\ssum{j=1}{4}E_{j}\right)
            -2\left[
            4 \varepsilon_{g}^{*} H - 3\ssum{j=1}{4}E_{j}\right]
            \\
            &=\varepsilon_{g}^{*} H.
        \end{aligned}
        \label{eq:caseCcompH}
    \end{equation}
    As a consequence, we get formula \eqref{eq:caseCHnthiter}.
    On the other hand, still from \eqref{eq:actionC} and \eqref{eq:caseCsumEj},
    we have:
    \begin{equation}
        \begin{aligned}
            \left(\Phi_{*}\right)^{2}\left(  E_{i}\right)
            &=
            \Phi_{*}\left(  \varepsilon_{g}^{*}H - \ssum{j\neq i}{}
            E_{\sigma(j)}\right)
            \\
            &= 3 \varepsilon_{g}^{*} H - 2\ssum{j=1}{4}E_{j}
            -
            \Phi_{*}\left( \ssum{j=1}{4}E_{j} - E_{\sigma(i)}\right)
            \\
            &= 3 \varepsilon_{g}^{*} H - 2\ssum{j=1}{4}E_{j}
            - \left[4 \varepsilon_{g}^{*} H - 3\ssum{j=1}{4}E_{j}\right]
            + \varepsilon_{g}^{*}H - \ssum{j\neq \sigma^{2}(i)}{} E_{j}
            \\
            &= E_{\sigma^{2}(j)}.
        \end{aligned}
        \label{eq:caseCcompEi}
    \end{equation}
    Again , we obtain formula \eqref{eq:caseCEinthiter}.
    The periodicity of the degrees of $\Phi$ follows immediately from 
    formulas \eqref{eq:caseCnthiter} and \eqref{eq:dncoeff}.
    The algebraic entropy of a limited sequence is clearly zero,
    and this ends the proof.
\end{proof}

An immediate corollary of \Cref{prop:caseC} concerns the periodicity
of $\Phi_*$ for $\Phi\in\Bir(\Pj^3)$ of the form $g\circ \crem_3$, for $g\in\Calc$ of type \emph{(B)}.

\begin{corollary} 
	\label{rem:periodic}  In the hypotheses of \Cref{prop:caseC}, we have
        \begin{equation}
	    \ord\left(\Phi_{*}\right) = \lcm\left( 2,\ord \sigma \right)\in \Set{2,4,6}.
            \label{eq:Phimper}
        \end{equation}
\end{corollary}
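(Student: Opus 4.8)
The plan is to read off the order of $\Phi_*$ directly from the closed-form expressions for $(\Phi_*)^n$ established in \Cref{prop:caseC}, since $\ord(\Phi_*)$ is by definition the least positive integer $n$ for which $(\Phi_*)^n$ fixes every element of the basis $\langle \varepsilon_{g}^*H, E_1,\ldots,E_4\rangle_{\Z}$ of $H^2(B_g,\Z)$ given in \eqref{eq:H2cremona}.

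First I would use \eqref{eq:caseCHnthiter} to rule out odd exponents: for $n$ odd one has $(\Phi_*)^n(\varepsilon_{g}^*H) = 3\varepsilon_{g}^*H - 2\sum_{j=1}^4 E_j \neq \varepsilon_{g}^*H$, so $(\Phi_*)^n \neq \Id$. Hence $\ord(\Phi_*)$ must be even, which already accounts for the factor $2$ appearing in the claimed least common multiple.

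Next, for $n$ even, \eqref{eq:caseCHnthiter} shows that $(\Phi_*)^n(\varepsilon_{g}^*H) = \varepsilon_{g}^*H$ holds automatically, so the only remaining requirement concerns the exceptional classes. By \eqref{eq:caseCEinthiter}, for even $n$ we have $(\Phi_*)^n(E_i) = E_{\sigma^n(i)}$ for $i=1,\ldots,4$, so $(\Phi_*)^n = \Id$ if and only if $\sigma^n = \Id$, that is $\ord\sigma \mid n$. Combining the two conditions, $(\Phi_*)^n = \Id$ precisely when $2 \mid n$ and $\ord\sigma \mid n$, i.e. when $\lcm(2,\ord\sigma) \mid n$; the least positive such $n$ is $\lcm(2,\ord\sigma)$, which establishes $\ord(\Phi_*) = \lcm(2,\ord\sigma)$.

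Finally, to pin down the explicit range $\Set{2,4,6}$ I would enumerate the possible orders of $\sigma$ as an element of $\SSym_4$: the cycle types in $\SSym_4$ are $1$, $2$, $2{+}2$, $3$, and $4$, so $\ord\sigma \in \Set{1,2,3,4}$. Taking $\lcm$ with $2$ gives the values $2, 2, 6, 4$ respectively, whence $\ord(\Phi_*) \in \Set{2,4,6}$. There is no serious obstacle in this argument; the only point requiring a little care is to verify that the action on $\varepsilon_{g}^*H$ forbids every odd iterate from being the identity, which is exactly what forces the factor $2$ in the least common multiple and prevents an undercount of the order.
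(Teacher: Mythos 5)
Your argument is correct and is exactly the one the paper intends: \Cref{rem:periodic} is stated as an immediate consequence of the closed-form expressions \eqref{eq:caseCnthiter}, and your proof simply spells out that reading — odd iterates are excluded by the action on $\varepsilon_{g}^{*}H$, even iterates reduce to $\sigma^{n}=\Id$ on the $E_{i}$, and the possible orders of $\sigma\in\SSym_4$ give $\Set{2,4,6}$. No gaps.
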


\begin{remark}
    We remark that from \Cref{prop:caseC,rem:periodic} it follows that,
    while the degrees of the maps $\Phi=g\circ\crem_3$, with  $g\in
    \Calc$  of type \textit{(B)}, are two-periodic, the map itself is not
    necessarily two-periodic. Indeed, from \Cref{rem:periodic} it follows
    that the order of the map $\Phi$ can be 2,4, or 6.  The situation
    here is similar to the periodic QRT maps found in \cite{Tsuda2004},
    with the difference that, for our maps, odd orders are not possible.
    \label{rem:tsuda}
\end{remark}

\subsection{Case \textit{(C)}} 
\label{CASEB}

Let $g\in\Calc$ be a projectivity of type \textit{(C)}. Then, a space of initial values for $\Phi=g\circ \crem_3$
is (see \Cref{orbits}) the variety 
\begin{equation}
	B_g=\Bl_{\Calr}\Pj^3.
\end{equation}
The cyclic subgroup of $\Calc$ generated by $g$ acts transitively on the set
$\{\Cale,\Calp,\Calq\}$ (see \Cref{possibilemma}). Without loss of
generality, we can suppose that $g$ acts as follows: 
\begin{equation}
    \begin{tikzcd}[row
	sep=tiny] \Cale \arrow[mapsto]{r}{g} & \Calp \arrow[mapsto]{r}{g} & \Calq
	\arrow[mapsto]{r}{g} & \Cale.  
    \end{tikzcd} 
    \label{eq:caseBsequence}
\end{equation}
Let us fix the following basis of the second cohomology group of $B_{g}$:
\begin{equation}
    \label{eq:H2caseB} 
    H_{\mbox{\tiny\textit{(C)}}}^2 := 
    H^{2}\left(B_{g},\Z \right) = 
    \langle \varepsilon_{g}^*H,E_1,E_2,E_3,E_4, P_1,P_2,P_3,P_4,Q_1,Q_2,Q_3,Q_4\rangle_{\Z},
\end{equation}
where $H$, $E_i,P_i$, for $i=1,\ldots,4$, are as in \Cref{CASEA},
and $Q_{i}$, for $i=1,\ldots,4$, is the cohomology class of the
exceptional divisor contracted by $\varepsilon_{g}$ to $q_i$.

Analogously to \Cref{CASEA}, also in this case, the action of the
standard Cremona transformation on the elements $\varepsilon_{g}^*H,E_1,E_2,E_3,E_4$
agrees with \Cref{CREMONACTION} while, the elements $P_i,Q_i$,
for $i=1,\ldots,4$, are fixed by ${\crem_3}_*$ because they lie over the
fixed points of $\crem_3$.
In terms of singular orbits \cite{BedfordKim2004} this implies that 
there are four closed singular orbits of length three.

So, as in the previous section, the linear map $g_*=(g^{-1})^*$ fixes
$\varepsilon_{g}^*H$ and it permutes the remaining elements of
the basis of the cohomology we have chosen. As a consequence,
the matrix that represents $g_*$ with respect to the basis
\eqref{eq:H2caseB} has a block decomposition similar to the block
decomposition given in \eqref{blocks}. In particular, the cyclic subgroup of
$\Gl(H_{\mbox{\tiny\textit{(C)}}}^2,{\Z})$ generated by $g_*$ induces
a transitive action on the set $\Set{\Cale,\Calp,\Calq}$.

Finally, the action of $\Phi_{*}=g_*\circ {\crem_3}_*$ on
$H_{\mbox{\tiny\textit{(C)}}}^2$ is: \begin{equation}
	\label{eq:actionB} \begin{tikzcd}[row sep=tiny] \varepsilon_{g}^{*}H
		\arrow[mapsto]{r}{\Phi_*} & 3 \varepsilon_{g}^{*}H -
		2\ssum{j=1}{4} P_{j},\\
  E_{i} \arrow[mapsto]{r}{\Phi_*}
		&\varepsilon_{g}^{*} H -
		\ssum{j\neq i}{} P_{\sigma_{1}(j)}&\mbox{for }i=1,\ldots,4,\\ P_i
		\arrow[mapsto]{r}{\Phi_*} &Q_{\sigma_{2}(i)}&\mbox{for
		}i=1,\ldots,4,\\
		Q_{i} \arrow[mapsto]{r}{\Phi_*}& E_{\sigma_{3}(i)}&\mbox{for }i=1,\ldots,4.
	\end{tikzcd}
\end{equation} where $\sigma_1,\sigma_2,\sigma_3$ are elements of
$\SSym_4$ corresponding to the nonzero $4\times4$ blocks of the matrix
representing $g_*$ with respect to the basis in \eqref{eq:H2caseB}.
The same action can be recovered from the four singular orbits using \cite[Eqs. (4.1,4.3)]{BedfordKim2004}.

As a consequence of the above description, we have the main result of this
subsection.

\begin{proposition}
    \label{prop:caseBgrowth} 
    Let $g\in \Calc$ be an element of type \textit{(C)}.
    Then, for all $n\ge 1$, the following formula holds true for the map $\Phi=g\circ \crem_{3}\in\Bir( \Pj^{3})$: 
    \begin{equation}
        \label{eq:caseBphinH} 
        (\Phi_{*})^{n}( \varepsilon_{g}^{*}H) 
            = d_{n} \varepsilon_{g}^{*}H - f_{n}\sum_{j=1}^{4} E_{j}
            - b_{n}\sum_{j=1}^{4} P_{j} - c_{n}\sum_{j=1}^{4} Q_{j},
    \end{equation} where the coefficients solve the following system of
    difference equations: 
    \begin{equation}
        \label{eq:dnen}
        \begin{gathered}
            d_{n}=d_n^\Phi = 3 d_{n-1} - 4 f_{n-1}, 
            \quad
            f_{n} = c_{n-1},
            \\
            b_{n} = 2 d_{n-1} - 3 f_{n-1},
            \quad
            c_{n} = b_{n-1},
        \end{gathered}
    \end{equation} 
    with initial conditions: 
    \begin{equation}
        \label{eq:ini} 
        d_{0}= 1, \,
        f_{0} = 0, \, 
        b_{0} = 0, \, 
        c_{0} = 0. \,
    \end{equation}
    This implies that the map $\Phi$ has positive algebraic entropy given by:
    \begin{equation}
        \label{eq:entropyB} 
        S_{\mbox{\tiny\textit{(C)}}} = 2 \log \varphi,
    \end{equation} 
    where $\varphi$ is the \emph{golden ratio}, i.e. the only positive solution 
    of the algebraic equation $\varphi^{2}=\varphi+1$.
    That is, the map $\Phi$ is \emph{non-integrable} according to the algebraic entropy.
\end{proposition}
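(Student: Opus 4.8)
The plan is to exploit the fact that the cohomological action \eqref{eq:actionB} preserves the rank-four sublattice $V\subset H^2(B_g,\Z)$ spanned by $\varepsilon_g^*H$ together with the three totally symmetric classes $\sum_{j=1}^4 E_j$, $\sum_{j=1}^4 P_j$, $\sum_{j=1}^4 Q_j$, and then to read off the spectral radius of $\Phi_*|_V$. Since the ansatz \eqref{eq:caseBphinH} asserts precisely that $(\Phi_*)^n(\varepsilon_g^*H)\in V$ for all $n$, the entire computation takes place inside this invariant subspace, and the algebraic entropy is the logarithm of the Perron eigenvalue of the restricted operator.

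First I would compute the image of each symmetric class under $\Phi_*$. Because $\sigma_1,\sigma_2,\sigma_3\in\SSym_4$ are permutations, the last two lines of \eqref{eq:actionB} give at once $\Phi_*(\sum_j P_j)=\sum_j Q_j$ and $\Phi_*(\sum_j Q_j)=\sum_j E_j$. For the first class, summing the $E_i$-line of \eqref{eq:actionB} over $i$ yields
\[
\Phi_*\Big(\sum_{j=1}^4 E_j\Big)=4\,\varepsilon_g^*H-\sum_{i=1}^4\sum_{j\neq i}P_{\sigma_1(j)}=4\,\varepsilon_g^*H-3\sum_{j=1}^4 P_j,
\]
the only nonobvious coefficient being the $3$, which arises because each fixed index $j$ lies in exactly three of the four sets $\{\,k:k\neq i\,\}$. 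Feeding the ansatz \eqref{eq:caseBphinH} through $\Phi_*$ and matching the coefficients of $\varepsilon_g^*H$, $\sum E_j$, $\sum P_j$, $\sum Q_j$ then reproduces exactly the difference system \eqref{eq:dnen}, while the base case \eqref{eq:ini} is nothing but $(\Phi_*)^0=\Id$; this establishes \eqref{eq:caseBphinH} by induction. Since $B_g=\Bl_{\Calr}\Pj^3$ is a space of initial values in case \textit{(C)}, the lifted map is algebraically stable, so \eqref{eq:dncoeff} guarantees that the leading coefficient $d_n$ really is the degree $d_n^\Phi$.

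To extract the entropy I would rewrite \eqref{eq:dnen} as a single linear recurrence on the vector $(d_n,f_n,b_n,c_n)$ with companion matrix
\[
M=\begin{pmatrix}3&-4&0&0\\0&0&0&1\\2&-3&0&0\\0&0&1&0\end{pmatrix},
\]
and compute its characteristic polynomial, which factors as
\[
\det(M-\lambda I)=\lambda^4-3\lambda^3+3\lambda-1=(\lambda-1)(\lambda+1)(\lambda^2-3\lambda+1).
\]
The dominant eigenvalue is therefore the larger root $(3+\sqrt5)/2$ of $\lambda^2-3\lambda+1$; since the golden ratio satisfies $\varphi^2=\varphi+1=(3+\sqrt5)/2$, this Perron root is exactly $\varphi^2$. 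As the initial vector $(1,0,0,0)$ has nonzero projection onto the corresponding eigenspace, $d_n^\Phi\sim(\varphi^2)^n$, and \Cref{rem:algent} yields $S_{\mbox{\tiny\textit{(C)}}}=\log\varphi^2=2\log\varphi$, which is \eqref{eq:entropyB}; in particular $S_{\mbox{\tiny\textit{(C)}}}>0$, so $\Phi$ is non-integrable.

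The only genuinely delicate point is the bookkeeping in the first step: verifying that $V$ is $\Phi_*$-invariant and, in particular, pinning down the coefficient $3$ coming from the permutation combinatorics. Everything after that is routine linear algebra, and the single conceptual observation worth isolating is that the Perron root is $\varphi^2$ rather than $\varphi$ itself --- this is exactly what forces the growth $d_n^\Phi\sim\varphi^{2n}$ recorded in \Cref{thm:main}.
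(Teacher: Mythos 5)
Your proposal is correct and follows essentially the same route as the paper: the same reduction to the $\Phi_*$-invariant sublattice spanned by $\varepsilon_{g}^{*}H$ and the three symmetric sums $\sum_j E_j$, $\sum_j P_j$, $\sum_j Q_j$, the same induction producing the system \eqref{eq:dnen}, and the same companion matrix. The only difference is cosmetic and occurs at the very end --- the paper solves the linear recurrence in closed form via Putzer's algorithm, whereas you read off the Perron root $\varphi^{2}=(3+\sqrt{5})/2$ from the characteristic polynomial; your assertion that $(1,0,0,0)$ has nonzero projection onto the $\varphi^{2}$-eigenspace is a routine check (pair with the corresponding left eigenvector, whose first entry is nonzero), so no gap results.
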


\begin{remark}
    We remark that \Cref{prop:caseBgrowth} is coherent with the upper bound on the 
    algebraic entropy presented in \cite[Theorem 4.2]{BedfordKim2004}. Explicitly, 
    for maps of the form $\ell\circ\crem_M$, where $\ell\in\PGl(M+1,\C)$, with at least 
    one singular orbit we have that $S_{\ell\circ\crem_M}< \log M$. Indeed, in the 
    case of \Cref{prop:caseBgrowth} we have $M=3$ and
    \begin{equation}
        S_{\mbox{\tiny\textit{(C)}}} = \log \left(\frac{3+\sqrt{5}}{2}\right)<\log 3.
    \end{equation}
    We will see in \Cref{sec:covariants,sec:invariants} that this imply a certain
    regularity which we do not have for ``generic'' maps.
    \label{rem:upperbound}
\end{remark}

\begin{proof}
    As in the proof of \Cref{prop:caseAgrowth}, we start by
    evaluating $\Phi_*$ on the sums $\sum_{j=1}^4E_j$, $\sum_{j=1}^4P_j$
    and $\sum_{j=1}^4Q_j$. Thanks to \Cref{eq:actionB}, we have:
    \begin{subequations}
        \label{evalB} 
        \begin{align}
            \Phi_{*}\left(\ssum{j=1}{4}E_j\right) &=4\varepsilon_{g}^*H-3\ssum{j=1}{4}P_j,&
            \\ \Phi_{*}\left(\ssum{j=1}{4}P_j\right) &=
            \ssum{j=1}{4}Q_{\sigma_2(j)}=\ssum{j=1}{4}Q_{j},&
            \\ \Phi_{*}\left(\ssum{j=1}{4}Q_j\right)
            &=
            \ssum{j=1}{4}E_{\sigma_2(j)}=\ssum{j=1}{4}E_{j}.
        \end{align}
    \end{subequations} 
    We proceed now by induction on $n\ge 1$. The case $n=1$ is a direct
    computation. We suppose now that \Cref{prop:caseBgrowth} 
    is true for some $n\ge 1$ and we prove it for $n+1$. We have:
    \begin{equation}
            \label{eq:caseBphiHpcomput} \begin{aligned}
                    (\Phi_{*})^{n+1}( \varepsilon_{g}^{*}H )
                    &=\Phi_*\left[\left(\Phi_*\right)^n\left(\varepsilon_{g}^*H\right)\right]=
                    \\ &= \Phi_* \left[d_{n} \varepsilon_{g}^{*}H -
                    f_{n}\ssum{j=1}{4} E_{j} - b_n\ssum{j=1}{4}
                    P_{j} - c_n\ssum{j=1}{4}
                    Q_{j}\right]=\\ &=\left(3d_n-4f_{n}
                    \right)\varepsilon_{g}^*H-c_n\ssum{j=1}{4}E_j-\left( 2d_{n}-3f_{n}
                    \right)\ssum{j=1}{4}P_j-b_n\ssum{j=1}{4}Q_j,
            \end{aligned}
    \end{equation} where the third equality is a consequence of
    \Cref{evalB}.  On the other hand we must have: \begin{equation}
            \label{eq:caseBphinHp} 
            (\Phi_{*})^{n+1}( \varepsilon_{g}^{*}H ) = d_{n+1} \varepsilon_{g}^{*}H -
            f_{n+1}\sum_{j=1}^{4} E_{j} - b_{n+1}\sum_{j=1}^{4} P_{j} -
            c_{n+1}\sum_{j=1}^{4} Q_{j}.
    \end{equation} So, the condition is satisfied by equating
    with the right hand side of \eqref{eq:caseBphiHpcomput} and
    \eqref{eq:caseBphinHp} and invoking the linear independence of
    the generators of $H_2(B_g,\Z)$.  This implies that $d_n,f_n,b_n$ and
    $c_n$ satisfy the system \eqref{eq:dnen} with initial conditions
    \eqref{eq:ini}.
    
    In order to compute the algebraic entropy from \Cref{def:algent}, we
    need to evaluate the asymptotic behaviour of $d_{n}^{\Phi}$ in
    equation \eqref{eq:dnen}. Since the system \eqref{eq:dnen} is linear
    we use the technique explained in \cite[Chap. 3]{Elaydi2005}.
    Writing the system as:
    \begin{equation}
        \begin{pmatrix}
            d_n
            \\
            f_n
            \\
            b_n
            \\
            c_n
        \end{pmatrix}
        =
        M_g
        \begin{pmatrix}
            d_{n-1}
            \\
            f_{n-1}
            \\
            b_{n-1}
            \\
            c_{n-1}
        \end{pmatrix},
        \quad \mbox{ where }
        M_g =
        \begin{pmatrix}
            3&-4&0&0
            \\ 
            0&0&0&1
            \\
            2&-3&0&0
            \\
            0&0&1&0
        \end{pmatrix},
    \end{equation}
    then the solution is:
    \begin{equation}
        \begin{pmatrix}
            d_n
            \\
            f_n
            \\
            b_n
            \\
            c_n
        \end{pmatrix}
        =
        M_g^n
        \begin{pmatrix}
            d_{0}
            \\
            f_{0}
            \\
            b_{0}
            \\
            c_{0}
        \end{pmatrix}
        =
        M_g^n
        \begin{pmatrix}
            1
            \\
            0
            \\
            0
            \\
            0
        \end{pmatrix}.
    \end{equation}
    Computing $M_g^n$, e.g. using Putzer algorithm \cite[Sect. 3.1.1]{Elaydi2005},
    we obtain the following solution for all $n\in\N$:
    \begin{equation}
        \begin{pmatrix}
            d_n
            \\
            f_n
            \\
            b_n
            \\
            c_n
        \end{pmatrix}
        =
        \begin{pmatrix}
            \displaystyle
            \frac{8}{5}\left(\varphi^{2n}+\varphi^{-2n}\right)
            -\frac{1}{5}\left( -1 \right) ^{n}-2
            \\ 
            \noalign{\medskip}
            \displaystyle
            \frac{2}{5}\left(\varphi^{2n-2}+\varphi^{-2n+2}\right)
            -\frac{1}{5}\left( -1\right) ^{n}-1
            \\ 
            \noalign{\medskip}
            \displaystyle
            \frac{2}{5}\left(\varphi^{2n+2}+\varphi^{-2n-2}\right)
            -\frac{1}{5}\left( -1\right) ^{n}-1
            \\
            \noalign{\medskip}
            \displaystyle
            \frac{2}{5}\left(\varphi^{2n}+\varphi^{-2n}\right)
            -\frac{1}{5}\left( -1 \right) ^{n}-1
        \end{pmatrix},
    \end{equation}
    where $\varphi$ is the golden ratio. 
    Since $d_n=d_n^\Phi$, we have:
    \begin{equation}
        d_n^\Phi \sim \varphi^{2n}, \quad
        n\to\infty,
    \end{equation}
    and, from \eqref{eq:algentdef},
    formula \eqref{eq:entropyB} follows.
\end{proof}

\section{Covariant linear systems}
\label{sec:covariants}

In this section we will introduce a notion of covariant linear system with respect a fixed divisor. We will see in next section that these objects are key tools in the computation of the invariants of the maps of the form $\Phi=g\circ \crem_3$, for $g\in \Calc$. Moreover, we will compute covariant linear systems of quadrics for $g$ of type \textit{(A)} and \textit{(B)} (\Cref{prop:quadricsA,covcasec,covac2}) and covariant linear systems of quartics for $g$ of type \textit{(C)} (\Cref{prop:quarticsC}). As in  \Cref{sec:entropy}, the results in this section works also for the birational maps of the form $\crem_3\circ g$ (see \Cref{inverso}).

In what follows we shall need the following definition.
\begin{definition}
    \label{def:convariance}

   Let $X$ be a smooth projective variety and let $\Phi\in\Bir(X)$
   be a birational map. Let also $D\in\Div(X)$ be a divisor. A linear
   system $\Sigma\subset |\mathcal L|$, for some line bundle $\mathcal
   L$, is $D$-covariant if, for any $E\in \Sigma$, we have $\Phi_*E\in
   \Sigma+D$, where
   \begin{equation}
        \Sigma +D=\Set{A+D|A\in \Sigma}.
        \label{eq:Sigma}
   \end{equation}
   In this context, if $\Phi_*E=E+D$, we will say that $E$  is $D$-invariant.
 \end{definition}

\begin{remark}
    \label{inverso} We remark that any result about the existence of $D$-covariant linear systems for maps of the form $g\circ \crem_3$, for $g\in\Calc,$ also implies existence of $\widetilde D$-covariant linear systems for maps of the form $\crem_3\circ g$ for some divisor $\widetilde D$. Indeed, $g$ and $g^{-1}$ are of the same type as per \Cref{proprietagruppi} and the equality
    \begin{equation}
        (g\circ\crem_3)_*S=S+D  
    \end{equation}
    implies
    \begin{equation}
        (\crem_3\circ g^{-1})_*S=S-(\crem_3\circ g^{-1})_*D.
    \end{equation}
 \end{remark}

\begin{remark}\label{CONDIIZIONEQUIVALENTE}
Let $\Phi=g\circ\crem_3\in\Bir(\Pj^3)$, for some $g\in\Calc$, be a birational map, and let $\Sigma$ be a $D$-covariant linear system for some $D\in\Div(\Pj^3)$. Then, since $g\circ\crem_3$ only contracts the coordinate hyperplanes $H_i=\Set{x_i=0}$, for $i=1,\ldots,4$, the divisor $D$ has the form
\begin{equation}
    \label{sommaiperpiani}
    D=\underset{i=1}{\overset{4}{\sum}}n_iH_i,
\end{equation}
for some $n_i\in\Z$, for $i=1,\ldots,4$. Notice also that, if $B $ is a space of initial values for $\Phi$,  $\Sigma$ is $D$-covariant  if and only if the linear system of the strict transform of a general member of $\Sigma$, is covariant on $B$, i.e. called $\widetilde{E}$ the strict transform of a general member $E\in \Sigma $, we have ${\Phi}_*\widetilde{E}\in|\widetilde{E}|$.
\end{remark}

We now start a case-by-case analysis of the $D$-covariant linear systems of quadrics for some divisor $D$, i.e. projective subspaces, $\Sigma\subset |\Calo_{\Pj^3}(2)|$ with the property that, $\Phi_*\Sigma=\Sigma +D\subset |\Calo_{\Pj^3}(2+\deg (D))|$. This will be enough to compute the invariants for maps of the form $g\circ\crem_3$ for $g\in\Calc$ of type \textit{(A)} and \textit{(B)} as we will show in \Cref{sec:invariants}. For type \textit{(C)}, we will need to consider $D$-covariant linear systems of quartics.

\begin{notation}
    In what follows, we will denote by $H_i$ the the divisor associated
    to the hyperplane $\Set{x_i=0}\subset\Pj^3$. Moreover, we will denote
    by $\overline{H}$ the divisor defined as the sum of the coordinate
    hyperplane divisors, i.e.
    \begin{equation}
        \overline{H}=\ssum{i=1}{4}H_i.
        \label{eq:Hi}
    \end{equation}
    Sometimes, with abuse of notation, we will denote by $H_i$ also the 
    $i$-th coordinate hyperplane.
\end{notation}
\subsection{Case \textit{(A)}} 
\label{sss:caseAinvariants}

As in \Cref{CASEA}, without loss of generality, we keep the assumption
that $\Phi =g\circ \crem_3$, where $g\in\Calc$ is of type \textit{(A)} and that it swaps $\Cale$ and $\Calp$.
Then, we have the following.

\begin{proposition}
    \label{prop:quadricsA}
    Let $g\in\Calc$ be an element of type \textit{(A)} and let $\Phi\in\Bir(\Pj^3)$ be the projective map defined as $\Phi =g\circ\crem_3$. Then, the net of quadrics $\Sigma_{\Calp}$ in \Cref{def:nets} is $\overline{H}$-covariant. Moreover, if $\Sigma $ is a positive-dimensional $D$-covariant linear system of quadrics, for some $D\in \Div(\Pj^3)$, we have $D=\overline{H}$ and $\Sigma\subset \Sigma_{\Calp}$.
\end{proposition}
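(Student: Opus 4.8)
The plan is to prove the two assertions separately: first the $\overline{H}$-covariance of $\Sigma_{\Calp}$ by an explicit computation, and then the maximality by a cohomological classification on the space of initial values $B_g$, whose center is $\langle g\rangle\cdot\Cale=\Cale\cup\Calp$.

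For the covariance I would start from the observation that each generator of $\Sigma_{\Calp}$ in \eqref{eq:quadricpm} is a semi-invariant of $\crem_3$. Substituting $\crem_3([x])=[x_2x_3x_4:x_1x_3x_4:x_1x_2x_4:x_1x_2x_3]$ into $S_k^{(+)}$ gives, for $k=1,2,3$,
\begin{equation}
    \crem_3^*S_k^{(+)}=x_1x_2x_3x_4\cdot S_k^{(+)},
\end{equation}
so that, as divisors, $\crem_3^*\{S_k^{(+)}=0\}=\overline{H}+\{S_k^{(+)}=0\}$. Since $g$ is of type \textit{(A)} and swaps $\Cale$ and $\Calp$, it permutes the eight base points $\Cale\cup\Calp$ and hence preserves the net of quadrics through them, i.e.\ $g^*\Sigma_{\Calp}=\Sigma_{\Calp}$. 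Combining the two facts, for every $S\in\Sigma_{\Calp}$ one gets $\Phi^*S=\overline{H}+(g^*S)$ with $g^*S\in\Sigma_{\Calp}$, which is exactly the $\overline{H}$-covariance of $\Sigma_{\Calp}$; by \Cref{inverso} the same conclusion holds for $\crem_3\circ g$.

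For the maximality I would pass to $B_g$ and use \Cref{CONDIIZIONEQUIVALENTE}: a positive-dimensional $D$-covariant linear system $\Sigma$ of quadrics corresponds to a class
\begin{equation}
    [\widetilde E]=2\varepsilon_{g}^{*}H-\sum_{i=1}^{4}a_iE_i-\sum_{i=1}^{4}b_iP_i\in H_{\mbox{\tiny\textit{(A)}}}^2
\end{equation}
fixed by $\Phi_*$, where $a_i,b_i\in\{0,1,2\}$ are the multiplicities of a general member of $\Sigma$ at $e_i$ and $p_i$. Imposing $\Phi_*[\widetilde E]=[\widetilde E]$ via \eqref{eq:actionA} and comparing coefficients in the basis \eqref{eq:H2caseA}, the coefficient of $\varepsilon_{g}^{*}H$ forces $\sum_i a_i=4$, while the coefficients of the $E_i$ and $P_i$ force $b=a\circ\sigma_2$ (so that $b$ is a permutation of $a$, and $\sum_i b_i=4$ as well). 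In particular $\deg D=\sum_i a_i=4$, and since $\Sigma$ will be shown to sit inside the already $\overline{H}$-covariant net $\Sigma_{\Calp}$, this degree-$4$ divisor supported on the contracted coordinate hyperplanes must be $D=\overline{H}$.

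It then remains to show that the only invariant class carrying a positive-dimensional system of quadrics is the balanced one $a=b=(1,1,1,1)$. If some $a_i$ vanished then, since $\sum_i a_i=4$ and the entries are at most $2$, pigeonhole forces some $a_j=2$, and because $b$ is a permutation of $a$ also some $b_k=2$; thus every member of $\Sigma$ would be singular both at $e_j$ and at $p_k$. Here I would invoke the incidence geometry of the Reye configuration used in the proof of \Cref{azioneindotta}: a line meeting $\Calr$ in at least two points contains at most one point of each of $\Cale,\Calp,\Calq$, so the line $\overline{e_jp_k}$ carries no further base point of multiplicity two. Consequently the quadrics singular at $e_j$ and at $p_k$ form at most the three-dimensional net of rank-two quadrics singular along $\overline{e_jp_k}$, and the remaining simple base points (the entries of $a$ and $b$ equal to $1$, which lie off this line) impose at least two additional independent conditions, collapsing the system to dimension $\le 0$ and contradicting positive-dimensionality. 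Hence all $a_i\ge 1$, and with $\sum_i a_i=4$ this gives $a_i=b_i=1$ for every $i$; every member of $\Sigma$ then passes through the eight points $\Cale\cup\Calp$, i.e.\ $\Sigma\subseteq\Sigma_{\Calp}$, with $D=\overline{H}$ as above.

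The step I expect to be the main obstacle is precisely this final dimension count. Because the eight points lie in the special (desmic) position recorded in \Cref{azioneindotta}, a naive count of conditions is not automatically valid, so one must read off the explicit incidences of the Reye configuration to certify that the conditions imposed by the simple base points off $\overline{e_jp_k}$ are genuinely independent. Once this independence is secured, the classification of $\Phi_*$-invariant quadric classes becomes immediate and the proposition follows.
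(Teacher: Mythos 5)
Your proposal is correct in substance and, for the maximality half, follows essentially the same route as the paper: both reduce to classifying $\Phi_*$-fixed classes $2\varepsilon_g^*H-\sum a_iE_i-\sum b_iP_i$ in $H^2(B_g,\Z)$ via \eqref{eq:actionA}, obtain $\sum a_i=4$ with $b$ a permutation of $a$, and rule out the unbalanced patterns $(2,1,1,0)$ and $(2,2,0,0)$. For the covariance half your route is genuinely different and more elementary: the paper computes $\Phi_*\widetilde S$ on the class of the strict transform using \eqref{evalA}, whereas you verify the semi-invariance $\crem_3^*S_k^{(+)}=x_1x_2x_3x_4\,S_k^{(+)}$ by direct substitution and then use that $g$ permutes $\Cale\cup\Calp$. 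The coordinate computation is correct, but note that the step ``$g$ permutes the base points, hence $g^*\Sigma_{\Calp}=\Sigma_{\Calp}$'' silently uses that $\Sigma_{\Calp}$ is the \emph{complete} linear system of quadrics through $\Cale\cup\Calp$ (true: the four conditions imposed by $\Calp$ on quadrics through $\Cale$ have rank three and cut out exactly $\langle S_1^{(+)},S_2^{(+)},S_3^{(+)}\rangle$); this deserves a sentence. Also, your derivation of $D=\overline{H}$ from ``$\deg D=4$'' is circular as written (a degree-four divisor on the coordinate planes need not be $\overline{H}$); it is cleaner to read off $D=\sum a_iH_i$ and conclude $D=\overline{H}$ only after $a=(1,1,1,1)$ is established.

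The step you flag as the main obstacle is real but closes easily, and it is worth noting that the paper itself only asserts that the unbalanced patterns ``cut zero-dimensional linear systems'' without proof, so your more detailed argument is a genuine improvement once completed. Concretely: a member singular at $e_j$ and $p_k$ is singular along $L=\overline{e_jp_k}$, hence lies in the net ($\cong\Pj^2$) of quadrics whose vertex contains the $2$-plane $V\subset\C^4$ corresponding to $L$; such a quadric is a binary quadratic form on $\C^4/V$, and a further base point $x\notin L$ imposes the vanishing at its image $\bar x\in\Pj(\C^4/V)\cong\Pj^1$. Taking $j=k=1$ for concreteness, the quotient map is $x\mapsto(x_2-x_3,x_2-x_4)$ and one finds $\bar e_2=\bar p_2$, $\bar e_3=\bar p_3$, $\bar e_4=\bar p_4$ with $\bar e_2,\bar e_3,\bar e_4$ pairwise distinct. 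Since in the pattern $(2,1,1,0)$ two distinct indices among $\{2,3,4\}$ carry a simple base point $e_i$, their images are already two distinct points of $\Pj^1$, giving two independent conditions and collapsing the system to dimension $\le 0$; the pattern $(2,2,0,0)$ is even easier, as the required singular locus then contains three non-collinear points of $\Calr$ and forces a (unique or empty) double plane. With this verification inserted, your proof is complete.
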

\begin{proof} Let $S\in\Sigma_{ \Calp}$ be a general element, and let $\widetilde{S} $ be its strict
	transform via $\varepsilon_{g}$, where $\varepsilon_g$ is the same as in  \Cref{CASEA}. We will prove that $|\widetilde{S}|$ is covariant, i.e. $\Phi_*\widetilde{S}\in|\widetilde{S}|$ (see \Cref{CONDIIZIONEQUIVALENTE}). We have:
	\begin{equation}\label{strictcovA}
	    \Phi_*\widetilde{S}\sim\Phi_*\left(2\varepsilon_{g}^*H-\ssum{j=1}{4}E_j-\ssum{j=1}{4}P_j\right)\sim 2\varepsilon_{g}^*H-\ssum{j=1}{4}E_j-\ssum{j=1}{4}P_j\sim \widetilde{S},
	\end{equation}
	where the second equality is a consequence of formulas
	\eqref{evalA}. Therefore, the net $\Sigma_{ \Calp}$ is $\overline{H}$-covariant.

Now, the strict transform $\widetilde{S}$ of a quadric $S\in\Pj^3$ is linear equivalent to the following divisor
\begin{equation}
    \widetilde{S}\sim 2\varepsilon_{g}^*H-\ssum{j=1}{4}n_jE_j-\ssum{j=1}{4}m_jP_j,
\end{equation}
for some non-negative integers $n_j,m_j$, for $j=1,\ldots,4$. If we impose the covariance, we get the conditions
\begin{equation}
\label{eq:conditionequiv}
    \begin{cases}
    \ssum{j=1}{4}n_j=4,\\
    n_i=m_{\sigma_1(i)} & \mbox{ for} i=1,\ldots,4. 
\end{cases}
\end{equation}
where $\sigma_1 $ is taken from \eqref{eq:actionA}. Now, up to relabeling the coordinates, we have three possibilities, namely
\begin{subequations}
    \begin{gather}
    n_1=m_1=1,\ n_2=m_2=1,\ n_3=m_3=1,\ n_4=m_4=1, 
    \\
    n_1=m_1=2,\ n_2=m_2=1,\ n_3=m_3=1,\ n_4=m_4=0,
    \\
    n_1=m_1=2,\ n_2=m_2=2,\ n_3=m_3=0,\ n_4=m_4=0. 
    \end{gather}
\end{subequations}
Notice that, the first choice works for any permutation of four elements
$\sigma_1$. Moreover, the second and the third choice of coefficients  cut
zero-dimensional linear systems.  As a consequence, the  only possible
$D$-covariant and positive-dimensional linear system is obtained for
$n_i=m_{i}=1$, for $i=1,\ldots,4$.
\end{proof}

\begin{remark}
    If $g$ swaps $\Cale$ and $\Calq$ the statement of \Cref{prop:quadricsA} is true with $\Sigma_{\Calq}$ instead of $\Sigma_{\Calp}$ (see \Cref{def:nets}).
\end{remark}

\begin{corollary}
    With the same notation and hypotheses in \Cref{prop:quadricsA},
    the anticanonical system of $B_g$ is covariant. Moreover, we have
    $|-K_{B_g}|=|2\widetilde{E}|$  where $\widetilde{E}$ is the strict
    transform of a general member of $\Sigma_{\Calp}$.
\end{corollary}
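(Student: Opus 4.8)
The plan is to reduce both assertions to the class computation already carried out in \Cref{prop:quadricsA}, so that each becomes a one-line consequence of a numerical identity in $H^2(B_g,\Z)$. First I would write down the anticanonical class explicitly: since $g$ is of type \textit{(A)} swapping $\Cale$ and $\Calp$, the centre of the blowup is $\langle g\rangle\cdot\Cale=\Cale\cup\Calp$, so specialising \eqref{anticanonical} to this orbit gives
\begin{equation}
    -K_{B_g}=4\varepsilon_{g}^*H-2\sum_{j=1}^{4}E_j-2\sum_{j=1}^{4}P_j .
\end{equation}

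Next I would recall from the proof of \Cref{prop:quadricsA} that the strict transform $\widetilde{E}$ of a general member of the net $\Sigma_{\Calp}$ — a quadric passing simply through each of the eight points of $\Cale\cup\Calp$ — has class
\begin{equation}
    \widetilde{E}\sim 2\varepsilon_{g}^*H-\sum_{j=1}^{4}E_j-\sum_{j=1}^{4}P_j .
\end{equation}
Doubling this relation yields $2\widetilde{E}\sim -K_{B_g}$, and since linearly equivalent divisors determine the same complete linear system, this is precisely the claimed equality $|-K_{B_g}|=|2\widetilde{E}|$.

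Finally, for covariance I would invoke the invariance of $\widetilde{E}$ established in \eqref{strictcovA}, namely $\Phi_*\widetilde{E}\sim\widetilde{E}$. Applying $\Phi_*$ to the identity $-K_{B_g}\sim 2\widetilde{E}$ and using linearity of $\Phi_*$ then gives
\begin{equation}
    \Phi_*(-K_{B_g})\sim 2\,\Phi_*\widetilde{E}\sim 2\widetilde{E}\sim -K_{B_g},
\end{equation}
so $|-K_{B_g}|$ is $0$-covariant — equivalently, invariant on $B_g$ — in the sense of \Cref{def:convariance} and \Cref{CONDIIZIONEQUIVALENTE}.

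There is essentially no genuine obstacle here beyond careful bookkeeping: the only point requiring attention is the matching of multiplicities. Each blown-up point of $\Cale\cup\Calp$ enters $-K_{B_g}$ with coefficient $2$, whereas a general quadric of $\Sigma_{\Calp}$ passes through each of these points with multiplicity exactly $1$; it is therefore precisely the \emph{double} of the quadric class, and not some other multiple, that reproduces the anticanonical class. Once this coincidence is observed, both the identification of linear systems and the covariance are immediate from the results already proved.
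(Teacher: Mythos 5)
Your argument is correct and is essentially the paper's own proof: the paper likewise combines the class computation $\widetilde{E}\sim 2\varepsilon_{g}^*H-\sum_j E_j-\sum_j P_j$ from \eqref{strictcovA} with the anticanonical formula \eqref{anticanonical} to get $-K_{B_g}\sim 2\widetilde{E}$, and the covariance then follows from $\Phi_*\widetilde{E}\sim\widetilde{E}$. Your extra remark on matching the multiplicity $2$ at each blown-up point is a fair expansion of what the paper leaves implicit.
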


\begin{proof}
    
 The statement follows by applying the first equivalence in \eqref{strictcovA} to formula \eqref{anticanonical}.
\end{proof}

\begin{remark}\label{cosarestadafar}
    Notice that, every pair of points $q_i,q_j$ in $\Calq$ cuts a unique
    quadric $S_{ij}$ in $\Sigma_{\Calp}$. Explicitly, with the
    notation in \Cref{thecubes}, the unique quadric of $\Sigma_{\Calp}$
    passing trough $q_i$ and $q_j$ is reducible and it is
    \begin{equation}
        S_{ij}=H_{ij}^\bullet + H_{hk}^\bullet,
    \end{equation}
    where
    \begin{equation}
        \Set{i,j,h,k}=\Set{1,2,3,4}
        \quad 
        \text{and}
        \quad 
        \bullet=
        \begin{cases}
            +&\text{ if } 4 \in \Set{i,j},\\
            -& \text{ otherwise.}
        \end{cases}
    \end{equation}
    The strict transform of this quadric on some space of initial
    values, is fixed by the action of $\Phi_*=(g\circ \crem_3)_*$,
    i.e. $S_{ij}$ is $\overline{H}$-invariant if and only if
    $\Set{q_i,q_j}\subset \Calr$ is invariant under $g$. Therefore, the
    existence of $\overline{H}$-invariant quadrics depends on the action
    of $g$ on the third tetrahedron, namely the tetrahedron with $\Calq$
    as set of vertices.
\end{remark}

\subsection{Case \textit{(B)}}
\label{sss:caseCinvariants}

We have the following result.

\begin{proposition}\label{covcasec}
 Let $g\in\Calc$ be an element of type \textit{(B)} and let $\Phi\in\Bir(\Pj^3)$ be the birational map defined as $\Phi=g\circ\crem_3$. Then, the  5-dimensional linear system $\Sigma_{\mbox{\tiny \textit{(B)}}}\subset |\Calo_{\Pj^3}(2)|$ consisting of the quadrics containing $\Cale$ is $\overline{H}$-covariant.
\end{proposition}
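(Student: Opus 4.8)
The plan is to run exactly the template used in the proof of \Cref{prop:quadricsA}, reducing the asserted $\overline{H}$-covariance to a single linear-algebra computation in $H^{2}(B_g,\Z)$ via \Cref{CONDIIZIONEQUIVALENTE}. First I would record the numerology of $\Sigma_{\mbox{\tiny \textit{(B)}}}$: a quadric passes through a coordinate point $e_i$ exactly when the coefficient of $x_i^{2}$ vanishes, so imposing passage through all of $\Cale$ cuts out four independent linear conditions on $|\Calo_{\Pj^3}(2)|\cong\Pj^9$, whence $\dim\Sigma_{\mbox{\tiny \textit{(B)}}}=5$, matching the statement. Taking the space of initial values $B_g=\Bl_{\Cale}\Pj^3$ with the cohomology basis \eqref{eq:H2cremona}, the strict transform via $\varepsilon_g$ of a general member $S\in\Sigma_{\mbox{\tiny \textit{(B)}}}$ has class
\begin{equation}
    \widetilde{S}\sim 2\varepsilon_g^*H-\ssum{j=1}{4}E_j,
\end{equation}
since such an $S$ is a quadric passing simply through each of the four blown-up points.

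By \Cref{CONDIIZIONEQUIVALENTE} it then suffices to check that $|\widetilde{S}|$ is covariant on $B_g$, that is $\Phi_*\widetilde{S}\in|\widetilde{S}|$. The needed input is already available: the action of $\Phi_*=(g\circ\crem_3)_*$ on $H^2(B_g,\Z)$ is given by \eqref{eq:actionC}, and the auxiliary evaluation \eqref{eq:caseCsumEj} reads $\Phi_*\left(\sum_{j=1}^4 E_j\right)=4\varepsilon_g^*H-3\sum_{j=1}^4 E_j$. Substituting both into $\Phi_*\widetilde{S}$, I expect the computation
\begin{equation}
    \Phi_*\widetilde{S}
    =2\left(3\varepsilon_g^*H-2\ssum{j=1}{4}E_j\right)-\left(4\varepsilon_g^*H-3\ssum{j=1}{4}E_j\right)
    =2\varepsilon_g^*H-\ssum{j=1}{4}E_j\sim\widetilde{S},
\end{equation}
so that $|\widetilde{S}|$ is covariant and hence $\Sigma_{\mbox{\tiny \textit{(B)}}}$ is covariant. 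This step is entirely routine, being the case-\textit{(B)} analogue of \eqref{strictcovA}.

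What remains, and what I regard as the only genuinely delicate point, is pinning down that the covariance divisor is \emph{exactly} $\overline{H}$ rather than some other $\sum_{i=1}^4 n_iH_i$ of the same total degree. The cohomological identity above only guarantees covariance for some $D$ of the shape prescribed in \Cref{CONDIIZIONEQUIVALENTE}, and a degree count merely forces $\sum_i n_i=4$. To confirm $n_i=1$ for every $i$ I would make the contraction explicit: applying $\crem_3^{*}$ to a member $S=\sum_{i<j}a_{ij}x_ix_j$ of $\Sigma_{\mbox{\tiny \textit{(B)}}}$ produces a sextic whose unique common factor is $x_1x_2x_3x_4$, i.e. the section cutting out $\overline{H}=\sum_{i=1}^4 H_i$; removing it returns a quadric through $\Cale$, and since $g^{-1}$ is a signed permutation it fixes the product $x_1x_2x_3x_4$ up to scalar and carries this quadric to another member of $\Sigma_{\mbox{\tiny \textit{(B)}}}$. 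This identifies $D=\overline{H}$ in the sense of \Cref{def:convariance} and completes the argument; the explicit factorisation (or, equivalently, the symmetry of the configuration under the coordinate hyperplanes) is precisely the bookkeeping that distributes the removed degree evenly among the four $H_i$.
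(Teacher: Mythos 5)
Your proof is correct and follows essentially the same route as the paper: both reduce the claim, via \Cref{CONDIIZIONEQUIVALENTE}, to checking on $B_g=\Bl_{\Cale}\Pj^3$ that the class $2\varepsilon_g^*H-\sum_{j=1}^{4}E_j$ is fixed by $\Phi_*$ using \eqref{eq:actionC} and \eqref{eq:caseCsumEj}. The only (harmless) difference is one of bookkeeping order: the paper starts from unknown multiplicities $n_i$ and derives $\sum_i n_i=4$, $n_i=n_{\sigma(i)}$, hence $n_i=1$, whereas you verify $n_i=1$ directly and then confirm $D=\overline{H}$ by the explicit factorisation $\crem_3^*S=x_1x_2x_3x_4\cdot S'$, which is a legitimate and correctly executed way to pin down the covariance divisor.
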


\begin{proof}
    First notice that $ \Sigma_{\mbox{\tiny \textit{(B)}}}$ is
    5-dimensional because the points in $\Cale$ are in general
    position. Let $\widetilde{S}$ be the strict transform of a general
    quadric $S\subset \Pj^3$. As in the proof of \Cref{prop:quadricsA},
    we will prove that $|\widetilde{S}|$ is covariant. We have,
    \begin{equation}
        \widetilde{S}\sim 2\varepsilon_{g}^*H-\ssum{i=1}{4}n_i E_i
    \end{equation}
    for some $n_i\ge 0$, for $i=1,\ldots,4$. If we now impose the
    covariance and we apply formulas \eqref{eq:actionC}, we find the conditions
    \begin{equation}
        \begin{cases}
        \ssum{i=1}{4}n_i=4\\
        n_i=n_{\sigma(i)} &\mbox{ for }i=1,\ldots,4,
        \end{cases}
    \end{equation}
    where $\sigma\in\SSym_4$ is the element corresponding to $g$
    (see \Cref{possibilemma}). If we impose the above conditions for any $g$ of type \textit{(B)}, and hence for any
    $\sigma\in\SSym_4$, we get \[n_i=1\mbox{ for }i=1,\ldots,4,\] which
    implies the thesis.
\end{proof}

\begin{corollary}
   With the same notation and hypotheses in \Cref{covcasec}, the anticanonical system of $B_g$ is covariant. Moreover, we have $|-K_{B_g}|=|2\widetilde{E}|$  where $\widetilde{E}$ is the strict transform of a general member of $\Sigma_{\mbox{\tiny \textit{(B)}}}$.
\end{corollary}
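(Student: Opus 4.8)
The plan is to mirror, essentially verbatim, the argument used for the analogous Case \textit{(A)} corollary, the only change being the bookkeeping of which points are blown up. First I would specialise formula \eqref{anticanonical} to the present situation: since $g$ is of type \textit{(B)} we have $\langle g\rangle\cdot\Cale=\Cale$ (see \Cref{orbits}), so the space of initial values is $B_g=\Bl_{\Cale}\Pj^3$ and the exceptional divisors are exactly $E_1,\dots,E_4$. Hence \eqref{anticanonical} reduces to
\begin{equation*}
    -K_{B_g}=4\varepsilon_g^*H-2\ssum{j=1}{4}E_j .
\end{equation*}

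Next I would recall from the proof of \Cref{covcasec} that the strict transform $\widetilde{E}$ of a general member of $\Sigma_{\mbox{\tiny\textit{(B)}}}$ has class
\begin{equation*}
    \widetilde{E}\sim 2\varepsilon_g^*H-\ssum{j=1}{4}E_j ,
\end{equation*}
this being the $n_i=1$ solution isolated there. Comparing the two displays gives the class identity $-K_{B_g}\sim 2\widetilde{E}$, and passing to complete linear systems yields $|-K_{B_g}|=|2\widetilde{E}|$, which is the second assertion of the corollary.

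For the covariance statement I would use that $\Sigma_{\mbox{\tiny\textit{(B)}}}$ is $\overline H$-covariant by \Cref{covcasec}, which by \Cref{CONDIIZIONEQUIVALENTE} is equivalent to $\Phi_*\widetilde{E}\sim\widetilde{E}$ on $B_g$; alternatively one checks this directly by feeding the class $2\varepsilon_g^*H-\sum_j E_j$ into \eqref{eq:actionC} and using \eqref{eq:caseCsumEj}. Since $\Phi_*$ acts linearly on $H^2(B_g,\Z)$, the identity $-K_{B_g}\sim 2\widetilde{E}$ then propagates to
\begin{equation*}
    \Phi_*(-K_{B_g})\sim 2\,\Phi_*\widetilde{E}\sim 2\widetilde{E}\sim -K_{B_g},
\end{equation*}
so the anticanonical class is $\Phi_*$-invariant and the anticanonical system of $B_g$ is covariant.

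I do not expect a genuine obstacle here: the entire content is the linear-equivalence identity $-K_{B_g}\sim 2\widetilde{E}$ together with the invariance $\Phi_*\widetilde{E}\sim\widetilde{E}$ already recorded in \Cref{covcasec}. The only point requiring care is the correct restriction of \eqref{anticanonical} to the four-point blowup of Case \textit{(B)} (in contrast to the eight-point blowup of Case \textit{(A)}), which alters the coefficient pattern but leaves the structure of the argument untouched.
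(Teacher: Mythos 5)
Your proposal is correct and follows essentially the same route the paper uses (for the analogous Case \textit{(A)} corollary the paper simply applies the covariance equivalence of the strict transform to formula \eqref{anticanonical}, and the Case \textit{(B)} corollary is left to the same one-line argument). The class computation $-K_{B_g}=4\varepsilon_g^*H-2\sum_j E_j\sim 2\widetilde{E}$ and the check $\Phi_*\widetilde{E}\sim\widetilde{E}$ via \eqref{eq:actionC} and \eqref{eq:caseCsumEj} are exactly what is needed.
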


The linear system $\Sigma_{\mbox{\tiny \textit{(B)}}}$ in \Cref{covcasec} is $\overline{H}$-covariant for all the $\Phi=g\circ\crem_3$ with $g$ of type \textit{(B)}, but, in some instances, there are positive dimensional linear systems which are $D$-covariant for some other divisor $D$. However, we will show in \Cref{bastasigmab} that $\Sigma_{\mbox{\tiny \textit{(B)}}}$ suffices for the construction of the invariants. Rigorously, we have the following.

\begin{proposition}\label{covac2}
    Let us fix the same notation and hypotheses in \Cref{covcasec}. We have the following possibilities.
    \begin{itemize}
        \item Suppose $g$ fixes exactly two points of $\Cale$, say $e_1$ and $e_2$. Then, the webs $\Sigma_{1}$ and $\Sigma_{2}$ of quadrics passing trough $e_3,e_4$ and having respectively a node at $e_1$ and $e_2$ are respectively $(2H_1+H_3+H_4)$-covariant and $(2H_2+H_3+H_4)$-covariant.
        \item Suppose $g$ fixes $\Cale$ pointwise. Then, for any divisor of the form $D=2H_i+H_j+H_k$, where $i,j$ and $k$ are different, there is a web of $D$-covariant quadrics and, for any divisor of the form $F=2H_i+2H_j$, where $i\not=j$, there is a net of $F$-covariant quadrics. 
     \end{itemize}
\end{proposition}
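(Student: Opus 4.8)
The plan is to run the same reduction as in the proof of \Cref{covcasec}: by \Cref{CONDIIZIONEQUIVALENTE}, checking that a linear system of quadrics is $D$-covariant amounts to checking that the strict transform of its general member is invariant on the space of initial values $B_g=\Bl_\Cale\Pj^3$. Accordingly I would parametrise a linear system by the multiplicities $(n_1,n_2,n_3,n_4)$ of its general member at the coordinate points, so that the strict transform has class $\widetilde{S}\sim 2\varepsilon_g^*H-\sum_{i=1}^4 n_iE_i$, and then impose $\Phi_*\widetilde{S}\sim\widetilde{S}$ via the explicit action \eqref{eq:actionC}.

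First I would extract the covariance conditions. Feeding $\widetilde{S}$ into \eqref{eq:actionC} and collecting the coefficient of $\varepsilon_g^*H$ and of each $E_k$, the relation $\Phi_*\widetilde{S}\sim\widetilde{S}$ becomes equivalent to the two conditions $\sum_{i=1}^4 n_i=4$ and $n_i=n_{\sigma(i)}$ for all $i$, where $\sigma\in\SSym_4$ is the permutation induced by $g$ on $\Cale$ (as in \Cref{possibilemma}). Thus the admissible multiplicity vectors are exactly those that sum to $4$ and are constant along the orbits of $\sigma$.

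Then I would dispatch the two cases. If $g$ fixes exactly $e_1,e_2$, then $\sigma$ fixes $1,2$ and transposes $3,4$, with orbits $\{1\},\{2\},\{3,4\}$; the admissible vectors $(2,0,1,1)$ and $(0,2,1,1)$ produce precisely the systems $\Sigma_1$ and $\Sigma_2$ of the statement. If $g$ fixes $\Cale$ pointwise, then $\sigma=\mathrm{id}$ and every vector with $\sum n_i=4$ and entries in $\{0,1,2\}$ is admissible; the vectors $(2,1,1,0)$ (up to reordering) give the systems attached to $D=2H_i+H_j+H_k$, and $(2,2,0,0)$ those attached to $F=2H_i+2H_j$. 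A short dimension count in $|\Calo_{\Pj^3}(2)|\cong\Pj^9$ confirms the terminology: a node imposes $4$ linear conditions and a simple base point $1$, so $(2,1,1,0)$ cuts a web ($9-6=3$) and $(2,2,0,0)$ cuts a net ($9-7=2$, the two nodes sharing the vanishing of the mixed coefficient).

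The step requiring the most care, which I regard as the main obstacle, is pinning down the covariance divisor $D$ exactly rather than merely up to the shape $\sum_i n_iH_i$ already guaranteed by \Cref{CONDIIZIONEQUIVALENTE}. For this I would compute the common factor removed in the pushforward $\crem_{3*}S$ of a quadric with multiplicities $(n_i)$: substituting the components of $\crem_3$ into the defining form shows that the extracted factor is exactly $\prod_i x_i^{\,n_i}$, so that at the level of $\Pj^3$ one has $\crem_{3*}S=S'+\sum_i n_iH_i$ with $S'$ again a quadric. Post-composing with the signed permutation $g$ only relabels the coordinate hyperplanes according to $\sigma$, and the $\sigma$-invariance of $(n_i)$ established above yields $g_*\big(\sum_i n_iH_i\big)=\sum_i n_iH_i$; hence $D=\sum_i n_iH_i$, which evaluates to $2H_1+H_3+H_4$ and $2H_2+H_3+H_4$ in the first case and to $2H_i+H_j+H_k$ and $2H_i+2H_j$ in the second, exactly as claimed.
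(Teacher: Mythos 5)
Your proposal is correct and follows exactly the method the paper uses for the neighbouring results: the paper in fact states \Cref{covac2} without proof, but the intended argument is precisely the one you give, namely parametrising a linear system by the multiplicity vector $(n_1,\dots,n_4)$ of its general member at $\Cale$, imposing $\sum_i n_i=4$ and $n_i=n_{\sigma(i)}$ via \eqref{eq:actionC} as in the proofs of \Cref{prop:quadricsA} and \Cref{covcasec}, and reading off the webs and nets from the admissible vectors. Your extra paragraph identifying the exact covariance divisor $D=\sum_i n_iH_i$ (via the extracted factor $\prod_i x_i^{n_i}$, which is indeed exact since the monomials $m_jm_k$ appearing in $Q\circ\crem_3$ are pairwise distinct, so no cancellation occurs) is a welcome precision that the paper leaves implicit in \Cref{CONDIIZIONEQUIVALENTE}.
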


\begin{remark}
    \label{rem:CREMONACASEC}
    Notice that the identity $\Id\in\Calc$ belongs to case
    \textit{(B)}. Therefore, the standard Cremona transformation $\crem_3=\Id \circ \crem_3$
    has the linear system $\Sigma_{\mbox{\tiny \textit{(B)}}}$ in
    \Cref{covcasec} as a $\overline{H}$-covariant linear system. Moreover,
    the set of $\overline{H}$-invariant divisors for ${\crem_3}$
    consists of the disjoint union, inside $\Sigma_{\mbox{\tiny
    \textit{(B)}}}$, of the  two nets of quadrics $\Sigma_{\Calp}$
    and $\Sigma_{\Calq}$  in \Cref{def:nets}.
\end{remark}

\subsection{Case \textit{(C)}}
\label{sss:caseBinvariants}

We start this last subsection by proving that in the case
\emph{(C)} we do not have $D$-covariant linear systems of quadric anymore.

\begin{proposition}
    \label{prop:noquadricsB}
    For any divisor $D\in\Div (\Pj^3)$, there is no $D$-covariant linear 
    system of quadrics for the map $\Phi$.
\end{proposition}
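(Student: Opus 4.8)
The plan is to reduce the statement to a linear-algebra computation on $H^2(B_g,\Z)$ and then to invoke the geometric fact, recorded in \Cref{nopointperS}, that no quadric passes through every point of $\Calr$. First I would apply \Cref{CONDIIZIONEQUIVALENTE}: if $\Sigma\subset|\Calo_{\Pj^3}(2)|$ were a $D$-covariant linear system, then the strict transform $\widetilde S$ on $B_g=\Bl_{\Calr}\Pj^3$ of a general member $S\in\Sigma$ would satisfy $\Phi_*\widetilde S\sim\widetilde S$. Writing this class in the basis \eqref{eq:H2caseB} as
\begin{equation*}
    \widetilde S\sim 2\varepsilon_g^*H-\sum_{i=1}^4 a_iE_i-\sum_{i=1}^4 b_iP_i-\sum_{i=1}^4 c_iQ_i,
\end{equation*}
where $a_i,b_i,c_i\ge 0$ are the base multiplicities of $\Sigma$ at the points $e_i,p_i,q_i$, converts covariance into an identity between integer vectors.

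Next I would impose $\Phi_*\widetilde S\sim\widetilde S$ using the explicit action \eqref{eq:actionB} and compare coefficients. The coefficient of $\varepsilon_g^*H$ yields $6-\sum_i a_i=2$, that is $\sum_i a_i=4$, while the coefficients of the $E_i$, $P_i$ and $Q_i$ force $(b_i)$, $(c_i)$ and $(a_i)$ to be obtained from one another through the permutations $\sigma_1,\sigma_2,\sigma_3\in\SSym_4$ appearing in \eqref{eq:actionB}. In particular the three vectors $(a_i)$, $(b_i)$, $(c_i)$ share the same multiset of values and each sums to $4$: any covariant system must therefore carry, at the vertices of each of the three tetrahedra $\Cale$, $\Calp$, $\Calq$, the \emph{same} pattern of base multiplicities with total $4$ on each tetrahedron.

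The main obstacle is the final geometric step, turning this numerical rigidity into a contradiction. Here I would use that the multiplicity of a quadric surface at a point is at most $2$, and equals $2$ only at a singular point, whereas a quadric has at most one isolated singular point (the vertex of a cone) unless it degenerates into a pair of planes, singular along a line. If the common multiplicity pattern contained a $2$, then each of the three tetrahedra would supply a distinct singular point of the general member, forcing $S$ to be a union of two planes whose singular line is a trisecant of $\Calr$; a short count of conditions shows such a configuration cannot move in a positive-dimensional family (and in fact cannot meet the remaining prescribed points). Hence the general member is smooth at every point of $\Calr$, all multiplicities equal $1$, and $S$ must pass through all twelve points of $\Calr=\Cale\cup\Calp\cup\Calq$. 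This is exactly what \Cref{nopointperS} forbids, since $\Sigma_{\Calp}\cap\Sigma_{\Calq}=\emptyset$ means no quadric contains $\Calr$. I therefore expect the delicate point to be isolating precisely this reduction to the all-simple pattern, after which \Cref{nopointperS} closes the argument and shows that no $D$-covariant linear system of quadrics can exist.
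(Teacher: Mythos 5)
Your proposal is correct and follows essentially the same route as the paper's proof: write the class of the strict transform on $\Bl_{\Calr}\Pj^3$, impose covariance via the action \eqref{eq:actionB} to force equal base multiplicities summing to $4$ on each of $\Cale$, $\Calp$, $\Calq$, and conclude from the non-existence of a quadric through all of $\Calr$ (\Cref{nopointperS}). Your explicit elimination of the multiplicity-$2$ patterns via the singular locus of a quadric is a welcome expansion of a step the paper compresses into the assertion that the quadric ``must pass through all the points in $\Calr$'' (note that, as in \Cref{prop:quadricsA}, this elimination uses positive-dimensionality of the linear system).
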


\begin{proof}
    The strict transform $\widetilde{S}$ of a quadric $S\in\Pj^3$
    is linear equivalent to the following divisor
    \begin{equation}
        \widetilde{S}\sim 2\varepsilon_{g}^*H-\ssum{j=1}{4}n_jE_j-\ssum{j=1}{4}m_jP_j-\ssum{j=1}{4}k_jQ_j
    \end{equation}
    for some non-negative integers $n_i,m_{i},k_{i}$ for $i=1,\ldots,4$. 
    If we impose the covariance, we get the conditions
    \begin{equation}
    \label{eq:conditionequivB}
        \begin{cases}
        \ssum{j=1}{4}n_j=4\\
        n_i=m_{\sigma_1(i)} & i=1,\ldots,4,\\
        m_{i}=k_{\sigma_2(i)} & i=1,\ldots,4. 
    \end{cases}
    \end{equation}
    where $\sigma_1  $ and $\sigma_2$ are taken from \eqref{eq:actionB}. A
    quadric $S$ satisfying \eqref{eq:conditionequivB} must pass trough
    all the points in $\Calr$, and a dimension count shows that a similar
    quadric does not exist.
\end{proof}

Although, in this case, there are no $D$-covariant linear systems of
quadrics, for $D\in\Div(\Pj^3)$, there is a $2\overline{H}$-covariant
linear system of quartics. The proof of the following proposition is
similar to the proof of \Cref{prop:quadricsA}.

\begin{proposition}
    \label{prop:quarticsC}
    Let $g\in\Calc$ be an element of type \textit{(C)} and let
    $\Phi\in\Bir(\Pj^3)$ be the projective map defined as $\Phi
    =g\circ\crem_3$.  Then, the pencil of quartics $\Xi_{\Calr}$ nodal at
    all the points of $\Calr$ is $2\overline{H}$-invariant. Moreover,
    if $\Sigma $ is a positive-dimensional $D$-covariant linear system
    of quartics for some $D\in \Div(\Pj^3)$, we have $D=2\overline{H}$
    and $\Sigma= \Xi_{\Calr}$.
\end{proposition}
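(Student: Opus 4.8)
The plan is to follow the scheme of \Cref{prop:quadricsA}, working on the space of initial values $B_g=\Bl_{\Calr}\Pj^3$ and invoking the criterion of \Cref{CONDIIZIONEQUIVALENTE}: a linear system is $D$-covariant on $\Pj^3$ exactly when the class of the strict transform of its general member is fixed by $\Phi_*$ on $B_g$. The first thing I would record is that the strict transform of a general member of $\Xi_{\Calr}$, being a quartic nodal at all twelve points of $\Calr$, has class $4\varepsilon_g^*H-2\ssum{j=1}{4}E_j-2\ssum{j=1}{4}P_j-2\ssum{j=1}{4}Q_j$, which by \eqref{anticanonical} is precisely $-K_{B_g}$; thus $\Xi_{\Calr}$ is nothing but the anticanonical pencil $|-K_{B_g}|$. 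Covariance then reduces to a one-line cohomology computation: applying $\Phi_*$ and using the evaluation formulas \eqref{evalB} gives
\[\Phi_*(-K_{B_g})=4\Bigl(3\varepsilon_g^*H-2\ssum{j=1}{4}P_j\Bigr)-2\Bigl(4\varepsilon_g^*H-3\ssum{j=1}{4}P_j\Bigr)-2\ssum{j=1}{4}Q_j-2\ssum{j=1}{4}E_j=-K_{B_g},\]
so the class is preserved; back on $\Pj^3$ the four contracted coordinate hyperplanes account for the shift by $2\overline{H}$, giving $2\overline{H}$-covariance of the pencil.

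To promote covariance of the pencil to the $2\overline{H}$-\emph{invariance} of each of its members, I would analyse the action induced by $\Phi$ on $\Xi_{\Calr}\cong\Pj^1$. Since $\Phi$ acts as an isomorphism in codimension one on $B_g$ and preserves $-K_{B_g}$, it permutes the members of the anticanonical pencil, and in particular permutes the three reducible \emph{desmic} quartics singled out in \Cref{desmicclassic}. As an automorphism of $\Pj^1$ fixing three distinct points is the identity, it is enough to check that these three special members are individually fixed rather than cyclically permuted; granting this, the action on the pencil is trivial, every member $E$ satisfies $\Phi_*E=E+2\overline{H}$, and the ratio of two generators produces exactly the single invariant predicted by \Cref{thm:main}. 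Verifying that $\Phi=g\circ\crem_3$ fixes (and does not $3$-cycle) the three desmic quartics is the one point here requiring care, since $g$ alone permutes the three cubes of \Cref{thecubes}.

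For the uniqueness statement I would copy the structure of \Cref{prop:quadricsA}: write the strict transform of a general quartic as $4\varepsilon_g^*H-\ssum{j=1}{4}n_jE_j-\ssum{j=1}{4}m_jP_j-\ssum{j=1}{4}k_jQ_j$ with $n_j,m_j,k_j\ge 0$, impose $\Phi_*\widetilde S\sim\widetilde S$, and read off from \eqref{eq:actionB} the constraints $\ssum{j=1}{4}n_j=8$ together with the cyclic relations $m_j=n_{\sigma_1^{-1}(j)}$, $k_j=m_{\sigma_2^{-1}(j)}$, $n_j=k_{\sigma_3^{-1}(j)}$. These force $(n_j)$, $(m_j)$, $(k_j)$ to share one common multiset of values summing to $8$, with $(n_j)$ invariant under $\tau=\sigma_1^{-1}\sigma_2^{-1}\sigma_3^{-1}$. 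It then remains to show that, among all such assignments, only the uniform one $n_j=m_j=k_j=2$ — which recovers $-K_{B_g}$, i.e.\ $\Xi_{\Calr}$ — yields a positive-dimensional system, all others being empty or a single quartic, in the spirit of how the asymmetric solutions in \Cref{prop:quadricsA} cut out zero-dimensional systems (compare the dimension count ruling out quadrics in \Cref{prop:noquadricsB}).

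The hard part will be this last positivity argument, and in particular making it uniform over all $g$ of type \textit{(C)}: the permutation $\tau$ varies with $g$, and for $\tau$ with short cycles the difference constraints alone admit non-uniform integer solutions such as $\{3,3,1,1\}$ or $\{4,2,2,0\}$, which must be excluded geometrically. I would rule them out using the $(12_4\,16_3)$ configuration: each of the sixteen lines meets $\Calr$ in one point of $\Cale$, one of $\Calp$ and one of $\Calq$ (see the proof of \Cref{azioneindotta}), so a quartic not containing such a line $L$ satisfies $n_i+m_j+k_l\le 4$ along it; summing over the sixteen lines (each point lying on four of them) gives total weight $4\cdot 24=96$, i.e.\ an average of $6>4$ per line. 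This over-incidence, together with the fact that a positive-dimensional (fixed-component-free) system cannot force every configuration line into its base locus in more than the balanced nodal way, is what should pin the multiplicities to the desmic value $2$ and close the argument.
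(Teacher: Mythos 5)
Your overall scheme is the one the paper intends (its proof is only the remark that the argument is ``similar to the proof of \Cref{prop:quadricsA}''), and your first paragraph is correct: the strict transform of a general member of $\Xi_{\Calr}$ has class $-K_{B_g}$, the computation with \eqref{evalB} shows this class is fixed by $\Phi_*$, and effectivity of strict transforms then gives $2\overline{H}$-covariance of the pencil. The genuine gap is your second paragraph. You propose to upgrade this to $2\overline{H}$-invariance of \emph{every member} by checking that $\Phi$ fixes the three desmic quartics individually, so that the induced automorphism of $\Xi_{\Calr}\cong\Pj^1$ fixes three points and is the identity. That check cannot succeed: since $\crem_3$ preserves each plane $H_{ij}^{\pm}$, the action of $\Phi=g\circ\crem_3$ on $\{S_{12,34},S_{13,24},S_{14,23}\}$ is the action of $g$, and \Cref{sss:casseCinva} together with \Cref{tab:ivariantcaseC} exhibits elements of type \textit{(C)} that permute the three desmic surfaces in a $2$-cycle or a $3$-cycle (the rows whose fixed locus is a single surface or $\emptyset$); for those maps the invariant has degree $8$ or $12$ exactly because the members of the pencil are \emph{not} individually preserved. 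The ``$2\overline{H}$-invariant'' in the statement has to be read as the pencil being carried into itself (i.e.\ covariance, as in the sentence introducing the proposition and as in \Cref{prop:quadricsA}); your first paragraph already proves that, and your second paragraph attempts to prove a false strengthening.

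On the uniqueness clause, your linear conditions $\ssum{j=1}{4}n_j=8$, $m_{\sigma_1(i)}=n_i$, $k_{\sigma_2(i)}=m_i$, $n_{\sigma_3(i)}=k_i$ are the correct analogues of \eqref{eq:conditionequiv}, but the exclusion of the unbalanced multiplicity vectors is only sketched. After discarding entries $\ge 5$ (impossible for a point on a quartic surface) one is left with a short list of multisets, and your Bezout count (if the three multiplicities along one of the sixteen configuration lines sum to more than $4$, the line lies on the surface) is a good start; yet the closing claim that a positive-dimensional system ``cannot force every configuration line into its base locus in more than the balanced nodal way'' is an assertion rather than an argument. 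What is actually needed, as in \Cref{prop:quadricsA}, is a case-by-case dimension count showing that each unbalanced assignment cuts out at most a single quartic. The paper leaves this step implicit as well, so this is an incompleteness rather than an error, but as written it does not close the proof.
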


\begin{corollary}
    With the same notation and hypotheses in \Cref{prop:quarticsC},
    the anticanonical system of $B_g$ is covariant. Moreover, we have
    $|-K_{B_g}|=|\widetilde{E}|$  where $\widetilde{E}$ is the strict
    transform of a general member of $\Xi_\Calr$.
\end{corollary}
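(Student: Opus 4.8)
The plan is to reduce both assertions to the single linear equivalence $\widetilde{E}\sim -K_{B_g}$ on $B_g$, in complete analogy with the corollaries following \Cref{prop:quadricsA} and \Cref{covcasec}. First I would compute the class of the strict transform $\widetilde{E}$ of a general member of $\Xi_\Calr$. A general member is a quartic surface of $\Pj^3$ having a node, i.e.\ a point of multiplicity two, at each of the twelve points of $\Calr=\Cale\cup\Calp\cup\Calq$; hence, blowing up these reduced points,
\[
    \widetilde{E}\sim 4\varepsilon_{g}^*H-2\sum_{j=1}^{4}E_j-2\sum_{j=1}^{4}P_j-2\sum_{j=1}^{4}Q_j.
\]

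Next I would compare this with the anticanonical class. Since $g$ is of type \textit{(C)}, the orbit is $\langle g\rangle\cdot\Cale=\Calr$ (see \Cref{orbits}), so formula \eqref{anticanonical} reads exactly
\[
    -K_{B_g}=4\varepsilon_{g}^*H-2\sum_{j=1}^{4}E_j-2\sum_{j=1}^{4}P_j-2\sum_{j=1}^{4}Q_j.
\]
Comparing the two displays gives $\widetilde{E}\sim -K_{B_g}$, and therefore $|-K_{B_g}|=|\widetilde{E}|$, which is the second assertion.

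For the covariance of the anticanonical system I would invoke the $2\overline{H}$-invariance of $\Xi_\Calr$ proved in \Cref{prop:quarticsC}. By the reformulation in \Cref{CONDIIZIONEQUIVALENTE}, this invariance means precisely $\Phi_*\widetilde{E}\in|\widetilde{E}|$ on $B_g$; since $\widetilde{E}\sim -K_{B_g}$ and $\Phi_*$ depends only on the cohomology class, this is the same as $\Phi_*(-K_{B_g})\sim -K_{B_g}$, i.e.\ the anticanonical system is covariant (indeed invariant). As an independent check one can recompute $\Phi_*(-K_{B_g})$ directly from the action \eqref{eq:actionB} together with the auxiliary formulas \eqref{evalB} and verify that it returns $-K_{B_g}$.

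There is no genuinely hard step: the only point requiring a little care is the multiplicity bookkeeping in the first display, namely confirming that a general quartic of $\Xi_\Calr$ has an honest node---and nothing worse---at every point of $\Calr$, so that each exceptional divisor appears with coefficient exactly $2$. This is guaranteed by the very definition of the desmic pencil $\Xi_\Calr$ (see \Cref{desmicclassic}) together with the general-position arrangement of $\Calr$, so once \Cref{prop:quarticsC} is available the verification is immediate.
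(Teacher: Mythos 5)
Your argument is correct and is essentially the proof the paper intends: exactly as in the corollary to \Cref{prop:quadricsA}, one compares the class of the strict transform (here $4\varepsilon_{g}^*H-2\sum E_j-2\sum P_j-2\sum Q_j$, since a general member of $\Xi_\Calr$ is a quartic nodal at the twelve points of $\Calr$) with formula \eqref{anticanonical} for $\langle g\rangle\cdot\Cale=\Calr$, and deduces covariance from \Cref{prop:quarticsC} via \Cref{CONDIIZIONEQUIVALENTE}. No gaps.
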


\begin{remark}
    \label{rem:desmic}
    We remark that the quartic pencil
    \begin{equation}
        \label{eq:desm}
        \begin{tikzcd}[column sep=huge]
            \Pj^3 \arrow[ dashed,"\Xi_{\Calr}"]{r} & \Pj^1
        \end{tikzcd}
    \end{equation}
    is exactly the pencil mentioned in \Cref{desmicclassic}. Its base
    locus consists of 16 lines, namely the 16 lines of the $(12_4\ 16_3)$ configuration (see \Cref{thecubes}). The resolution of its indeterminacies
    induces the crepant resolution\footnote{Recall that the nodes
    are special instances of rational double points.} of each fibre
    (see \cite{GraffeoConstellation2022}) but three, namely the desmic
    surfaces. Therefore, we get a one-dimensional family of surfaces
    \begin{equation}
        Y \rightarrow \Pj^1
    \end{equation}
    whose generic member is a $K3$ surface with 16 disjoint rational
    curves. This is the highest possible number of disjoint rational
    curves on a $K3$ surface and it is known to be achieved by Kummer
    surfaces of an abelian surface of the form $A=E\times E$ for $E$
    elliptic curve (see \cite[Theorem B.5.6]{HuntGeomQuot1996}).
\end{remark}

\section{Construction of the invariants}
\label{sec:invariants}

In this section we explain how to determine the existence of invariants
in the three cases listed in \Cref{possibilemma} and how to compute all
of them. We recall that, by invariant of a map $\Phi\in\Bir(\Pj^M)$,
we mean a rational function, i.e. a degree zero element $R=P/Q\in
\C(x_1,\ldots,x_{M+1})$ for $P,Q\in\C[x_1,\ldots,x_{M+1}]$ homogeneous,
such that:
\begin{equation}
    \label{eq:invcond}
    \Phi_*(R)(x_1,\ldots,x_{M+1})=R(x_1,\ldots,x_{M+1}).
\end{equation}

As for the results in \Cref{sec:entropy,sec:covariants}, the results
in this section hold true for the maps of the form $\crem_3\circ g$ as
well. Indeed, $g$ and $g^{-1}$ are of the same type and, after composing
the equality $R\circ \Phi \equiv R$ with $\Phi ^{-1}$ we find that,
if $R$ is invariant for $g\circ\crem_3 $ then, it is also invariant for
$\crem_3\circ g^{-1}$.

We recall the following known fact on the construction of invariants
of birational maps adapted to our setting. 

\begin{lemma}
    \label{rem:ellipticandnoint} Let $\Phi\in\Bir(\Pj^3)$ be a projective
    map and let $D\in\Div(\Pj^3)$ be a divisor. Let us consider
    a $D$-covariant linear system $\Sigma\subset |\mathcal{L}|= \Pj
    H^0(\Pj^3,\mathcal L)$, for some line bundle $\mathcal L$.  Suppose
    that there exists a pencil $\Xi\subset \Sigma$ of $D$-invariant
    divisors. In particular $\Xi= \Pj V_\Xi$ for some vector subspace
    $V_\Xi\subset H^0(\Pj^3,\Call)$. Then, one can construct invariants
    of the map $\Phi$ by considering the meromorphic functions of the
    form $f=s_1/s_2$ for any given choice of  $s_1,s_2\in V_\Xi$.
\end{lemma}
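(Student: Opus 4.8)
The plan is to convert the geometric statement that the members of $\Xi$ are $D$-invariant into a single multiplicative identity at the level of homogeneous polynomials, and then to read off invariance of the ratios directly. Write $\Psi=\Phi^{-1}$ and fix a rational monomial $\delta$ with $\operatorname{div}(\delta)=D$; such a $\delta$ exists because, by \Cref{CONDIIZIONEQUIVALENTE}, $D$ is a combination of the coordinate hyperplanes. For a nonzero section $s\in V_\Xi$ the divisor $\{s=0\}$ is $D$-invariant, so by \Cref{def:convariance} one has $\Phi_*\{s=0\}=\{s=0\}+D$. Since $\Phi_*=(\Phi^{-1})^*=\Psi^*$ acts on a hypersurface as the total transform $\operatorname{div}(s\circ\Psi)$, this says $\operatorname{div}(s\circ\Psi)=\operatorname{div}(s)+D=\operatorname{div}(\delta\, s)$. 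Two homogeneous polynomials with the same divisor on $\Pj^3$ are proportional, whence
\[
    s\circ\Psi=c(s)\,\delta\, s,\qquad c(s)\in\C^{*}.
\]

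First I would settle the one genuinely substantive point: that the scalar $c(s)$ is the \emph{same} for every $s\in V_\Xi$. Consider the map $u\colon V_\Xi\to V_\Xi$ defined by $u(s)=(s\circ\Psi)/\delta$; it is well defined and lands in $V_\Xi$ precisely because the identity above shows $u(s)=c(s)\,s$. Substitution $s\mapsto s\circ\Psi$ is linear in the coefficients of $s$ and division by the fixed $\delta$ is linear, so $u$ is a linear endomorphism of the two-dimensional space $V_\Xi$; and the identity reads $u(s)=c(s)\,s$, so every nonzero vector of $V_\Xi$ is an eigenvector of $u$. An operator all of whose vectors are eigenvectors is scalar: if $s,s'$ are independent, comparing $u(s+s')=c(s+s')(s+s')$ with $u(s)+u(s')=c(s)\,s+c(s')\,s'$ forces $c(s)=c(s')=c(s+s')$. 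Hence $c(s)\equiv c$ and $s\circ\Psi=c\,\delta\, s$ for all $s\in V_\Xi$.

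Granting the uniform constant, the conclusion is a one-line computation: for $s_1,s_2\in V_\Xi$ and $f=s_1/s_2$,
\[
    \Phi_*f=f\circ\Psi=\frac{s_1\circ\Psi}{s_2\circ\Psi}=\frac{c\,\delta\, s_1}{c\,\delta\, s_2}=\frac{s_1}{s_2}=f,
\]
so $f$ satisfies \eqref{eq:invcond} and is an invariant of $\Phi$. The main obstacle is exactly the eigenvalue–uniformity step of the second paragraph, and it is here that the hypothesis is used in full force: it is essential that the \emph{entire} pencil $\Xi$ be $D$-invariant, since from two isolated $D$-invariant divisors one would only obtain $\Phi_*f=(c_1/c_2)f$, an eigenfunction rather than a genuine invariant.
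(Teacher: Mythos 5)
Your proof is correct. Note that the paper does not actually prove this lemma: it defers to the literature (\cite{FalquiViallet1993}, see also \cite{CarsteaTakenawa2019JPhysA,Gubbiotti_Levi70}), so there is no in-text argument to compare against; what you have written is the standard linearisation argument and it is sound. The translation of $\Phi_*\{s=0\}=\{s=0\}+D$ into $s\circ\Psi=c(s)\,\delta\,s$ is exactly how the paper itself uses covariance in practice (cf.\ the identity $(\Phi_h^{-1})^*(Q)=\kappa_1\kappa_2\kappa_3\kappa_4\,Q$ following \eqref{eq:Qeuler}), and the eigenvector step --- a linear endomorphism of the two-dimensional $V_\Xi$ all of whose nonzero vectors are eigenvectors is scalar --- is the genuine content; your closing observation that two isolated $D$-invariant members would only yield an eigenfunction $\Phi_*f=(c_1/c_2)f$ is precisely what the paper records in \Cref{rem:3points} and \Cref{fixlines}. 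One cosmetic remark: you invoke \Cref{CONDIIZIONEQUIVALENTE} to produce $\delta$, but that remark is specific to maps of the form $g\circ\crem_3$, whereas the lemma is stated for arbitrary $\Phi\in\Bir(\Pj^3)$ and arbitrary $D\in\Div(\Pj^3)$; this is harmless, since on $\Pj^3$ every divisor is cut out by a homogeneous rational function (a ratio of products of the defining polynomials of its prime components), so $\delta$ exists without any appeal to that remark.
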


For a proof of \Cref{rem:ellipticandnoint} we refer to \cite{FalquiViallet1993} (see also \cite{CarsteaTakenawa2019JPhysA, Gubbiotti_Levi70} and reference therein).

\begin{remark}
    \label{rem:3points}
    Let $\Sigma$ be a $D$-covariant linear system and let $F_1,F_2\in \Sigma$ be two $D$-invariant elements. Then, the elements of the pencil generated by $F_1$ and $F_2$ are not necessarily $D$-invariant. This is a consequence of the fact that a projectivity of $\Pj^1$ is uniquely determined by its value on three points (see \cite[Section 1.3]{PARDINI}).
\end{remark}

The results in \Cref{sec:entropy} lead us to expect the following behaviour of the maps of the form $\Phi=g\circ \crem_3$:
\begin{description}
    \item[Case \emph{(A)}] the map admits two invariants\footnote{We recall 
    that, for three-dimensional systems, the notion of Liouville--Poisson integrability and
    algebraic integrability agree (see the discussion in \cite{GJTV_class,GLT_coalgebra} for more details).},
    \item[Case \emph{(B)}] the map admits more than two invariants,
    \item[Case \emph{(C)}] the map admits at most one invariant.
\end{description}

In what follows we will prove that the actual number of invariants agrees
with the expected one by showing how to construct them explicitly.

\begin{remark}
    We remark that the invariants will be of degree up to twelve.
    This is because a $g\in\Calc$ is is not guaranteed to preseve
    the fibres of the associated linear system, but rather they are permuted
    periodically. As a consequence, they will be invariants for an appropriate
    power of the map itself. This behaviour is linked to the notion of $k$-invariants introduced in 
    \cite{Haggaretal1996}, i.e. invaiants for the $k$-th iterate $\Phi^k$ of $\Phi$. In particular, it is possible to compute the invariants of $\Phi$ starting from its $k$-invariants (see \cite{HietarintaBook,RJ15}). For a geometric discussion on the origin of
    this kind of maps in $\Bir(\Pj^2)$ we refer to \cite{CarsteaTakenawa2012}.
    We mention that a classification and constructions of these plane maps 
    were presented in \cite{JoshiKassotakis2019,KassotakisJoshi2010,RJ15}.
    We finally recall that a three-dimensional example, obtained as \emph{deflation} 
    (see \cite{JoshiViallet2017}) of a four-dimensional map admitting an 
    anti-invariant was presented in \cite{GJTV_sanya}. The appearance
    of more general fibre exchanges in dimension three is,
    up to our knowledge, new.
\end{remark}

\subsection{Case \textit{(A)}} \label{subsec:casea}

Let $g\in\Calc$ be an element of type \textit{(A)}. As usual, without loss
of generality (see \Cref{CASEA}), we can suppose that $g$ swaps $\Cale $
and $ \Calp$.

As explained in \Cref{rem:ellipticandnoint}, in order to build
invariants for the map $\Phi=g\circ \crem_3$, we need to find pencils
of $D$-invariant divisors inside some $D$-covariant linear system. We
start looking for quadrics.

As a consequence of \Cref{prop:quadricsA}, we have at most
$\overline{H}$-invariant pencils quadrics in the $\overline{H}$-covariant net
$\Sigma_{\Calp}$.

Recall that the elements in $\Sigma_{\Calp}$ are
$\overline{H}$-invariant with respect to the map $\crem_3$
(see \Cref{rem:CREMONACASEC}). Therefore, since $\Phi_*= g_*\circ
{\crem_3}_*$, $\overline{H}$-invariant divisors of $\Phi$ correspond to
divisors which are invariant under $g$. As a consequence, the existence
of $\overline{H}$-invariant elements in $\Sigma_{\Calp}$ is ruled by
the action of $g$ on the collection of pairs of elements in $\Calq $
as explained in \Cref{cosarestadafar}.

Each pair of points $\Set{q_i,q_j}\subset \Calq $, for $1\le i<j \le 4$,
corresponds to one of the quadrics $S_{ij}$ in \Cref{cosarestadafar}. Now,
the action of $g$ on
\begin{equation}
S_\Calq=\Set{S_{ij}|1\le i<j \le 4}
\end{equation}
is uniquely determined by the action of  $g$ on $\Calq$. 

\Cref{tab:excaseA}  compares the possible cycle decompositions of the
action  of $g$ on $\Calq$, on $ S_{\Calq}$ and on $\Cale\cup\Calp$. In
particular, for each possible cycle decomposition, an element $g$
that realizes it and its cycle decomposition on $\Calp\cup\Calq$ are
given. Length one cycles are omitted.

\begin{table}[ht]
    \centering
      \begin{tabular}{c|ccccc}
    \hline
&\# & Action on $\Calq$ & \hspace{-10pt} Action on ${S}_\Calq$& \hspace{-5pt}$g$& Action on ${C}_{\Cale\Calp}$ \\
    \hline
 \hspace{-10pt} \mbox{\textit{(i)}} &   4 &$\Id_\Calq$ & $\Id_{S_\Calq}$ &\hspace{-5pt}{\footnotesize $
        \begin{bmatrix}
            1&\hspace{-7pt}1&\hspace{-7pt}1&\hspace{-7pt}-1
            \\ 
            1&\hspace{-7pt}1&\hspace{-7pt}-1&\hspace{-7pt}1
            \\ 
            1&\hspace{-7pt}-1&\hspace{-7pt}1&\hspace{-7pt}1
            \\ 
            -1&\hspace{-7pt}1&\hspace{-7pt}1&\hspace{-7pt}1
    \end{bmatrix}$} &\hspace{-10pt} {\footnotesize $(e_1\ p_4)(e_2\ p_3)(e_3\ p_2)(e_4\ p_1)$} \\\hline
   \hspace{-10pt} \mbox{\textit{(ii)}} &   24 & {\footnotesize $(q_1\ q_2)$} &{\footnotesize $(S_{13}\ S_{23})(S_{14}\ S_{24})$} &\hspace{-5pt}{\footnotesize $\begin{matrix}
          \begin{bmatrix}
            1&\hspace{-7pt}1&\hspace{-7pt}-1&\hspace{-7pt}1
            \\ 
            1&\hspace{-7pt}1&\hspace{-7pt}1&\hspace{-7pt}-1
            \\ 
            1&\hspace{-7pt}-1&\hspace{-7pt}1&\hspace{-7pt}1
            \\ 
            -1&\hspace{-7pt}1&\hspace{-7pt}1&\hspace{-7pt}1
    \end{bmatrix}\\
    \begin{bmatrix}
            -1&\hspace{-7pt}1&\hspace{-7pt}1&\hspace{-7pt}1
            \\ 
            1&\hspace{-7pt}-1&\hspace{-7pt}1&\hspace{-7pt}1
            \\ 
            1&\hspace{-7pt}1&\hspace{-7pt}1&\hspace{-7pt}-1
            \\ 
            1&\hspace{-7pt}1&\hspace{-7pt}-1&\hspace{-7pt}1
    \end{bmatrix}
      \end{matrix}$}& \hspace{-10pt} {\footnotesize $\begin{matrix}(e_1\ p_4\ e_2\ p_3)(e_3\ p_1\ e_4\ p_2)  \\
     \  \\ \  \\
      (e_1\ p_1)(e_2\ p_2)(e_3\ p_4)(e_4\ p_3)
      \end{matrix}$  } \\ \hline
      
   \hspace{-10pt}    \mbox{\textit{(iii)}} & 12 & {\footnotesize $(q_1\ q_2) (q_3\ q_4)$} &{\footnotesize $(S_{13}\ S_{24})(S_{14}\ S_{23})$}&\hspace{-5pt}{\footnotesize $\begin{bmatrix}
            -1&\hspace{-7pt}1&\hspace{-7pt}-1&\hspace{-7pt}-1
            \\ 
            1&\hspace{-7pt}-1&\hspace{-7pt}-1&\hspace{-7pt}-1
            \\ 
            1&\hspace{-7pt}1&\hspace{-7pt}1&\hspace{-7pt}-1
            \\ 
            1&\hspace{-7pt}1&\hspace{-7pt}-1&\hspace{-7pt}1
    \end{bmatrix}$}&\hspace{-10pt} {\footnotesize $(e_1\ p_1\ e_2\ p_2)(e_3\ p_3\ e_4\ p_4)$} \\ \hline
 \hspace{-10pt}   \mbox{\textit{(iv)}} &   32 & {\footnotesize $(q_1\ q_2\  q_3)$} &{\footnotesize $(S_{12}\ S_{23}\  S_{13})(S_{14}\ S_{24}\  S_{34})$} &\hspace{-5pt}{\footnotesize $
   \begin{bmatrix}
            -1&\hspace{-7pt}1&\hspace{-7pt}1&\hspace{-7pt}1
            \\ 
            1&\hspace{-7pt}1&\hspace{-7pt}-1&\hspace{-7pt}1
            \\ 
            1&\hspace{-7pt}1&\hspace{-7pt}1&\hspace{-7pt}-1
            \\ 
            1&\hspace{-7pt}-1&\hspace{-7pt}1&\hspace{-7pt}1
    \end{bmatrix} $}&\hspace{-10pt} {\footnotesize $ (e_1\ p_1)(e_2\ p_4\ e_3\ p_2\ e_4\ p_3)$}
      \\ \hline
   \hspace{-10pt}  \mbox{\textit{(v)}} & 24 & {\footnotesize $(q_1\ q_2\  q_3\ q_4)$} & {\footnotesize $(S_{12}\ S_{23}\  S_{34}\ S_{14}) (S_{13}\ S_{24})$ }& \hspace{-5pt}{\footnotesize $\begin{bmatrix}
            -1&\hspace{-7pt}1&\hspace{-7pt}-1&\hspace{-7pt}-1
            \\ 
            1&\hspace{-7pt}1&\hspace{-7pt}-1&\hspace{-7pt}1
            \\ 
            1&\hspace{-7pt}1&\hspace{-7pt}1&\hspace{-7pt}-1
            \\ 
            1&\hspace{-7pt}-1&\hspace{-7pt}-1&\hspace{-7pt}-1
    \end{bmatrix}$}& \hspace{-10pt} {\footnotesize $(e_1\ p_1\  e_4\ p_2)(e_2\ p_4\ e_3\ p_3)$}  \\
      \hline 
    \end{tabular}
    \caption{Comparison of the possible cycle decompositions of the action  of $g$ on $\Calq$, on $ S_{\Calq}$ and on $C_{\Cale\Calp}$}
    \label{tab:excaseA}
\end{table}

\begin{remark}
    \label{fixpoints}

    Let $s_{ij}\in\C[x_1,\ldots,x_4]$ be a polynomial defining the quadric
    $S_{ij}\in S_\Calq$, for $1\le i<j\le 4$. Let also $\sigma=(S_{i_1j_1}\
    \cdots\ S_{i_kj_k})$, for some $k\in\Set{1,2,3,4}$, be a cycle appearing in the cycle decomposition
    of the action of $g $ on $S_\Calq$.

    One can try to find coefficients $ \alpha_{i_sj_s} \in \C $, for
    $s\in 1,\ldots,k$, such that the quadric
    \begin{equation}
        S_\sigma=\Set{\alpha_{i_1j_1 }s_{i_1j_1 }+\alpha_{i_2j_2 } s_{i_2j_2 }
        +\cdots +\alpha_{i_kj_k } s_{i_kj_k }=0}
    \end{equation}
   is $\overline{H}$-invariant. Notice that, in general, $S_\sigma\cap
   \Calq=\emptyset$.

   Moreover, if we chose appropriately a representative for
   $g\in\Calc\subset \Pj\Gl(4,\C) $, in the actual computation,  we can
   restrict to $\alpha_{i_aj_a}\in \Set{\pm 1}$ for all $a=1,\ldots,k$.

   This observation allows one to construct a finite number (possibly
   zero) of $\overline{H}$-invariant quadrics. At most one quadric
   for each cycle in the cycle decomposition of the action of $g$
   on $S_\Calq$. Now, we want to understand if they generate pointwise
   $\overline{H}$-invariant pencils, i.e. pencils whose points correspond
   to $\overline{H}$-invariant quadrics. This will help us in constructing
   the invariants of $\Phi$ as described in \Cref{rem:ellipticandnoint}.
\end{remark}

\begin{proposition}\label{fixlines}
   Let $S_{\sigma_i}=\Set{s_i=0}\in \Sigma_\Calp$, for $i=1,2$,
   be two quadrics obtained  as described in \Cref{fixpoints}
   and let $\Xi\subset \Sigma_{\Calp }$ be the pencil generated by
   $S_{\sigma_1}$ and $S_{\sigma_2}$. Then, the pencil $\Xi$ is pointwise
   $\overline{H}$-invariant if and only if the quadric
   \begin{equation}
        \Set{s_1+s_2=0}
   \end{equation}
   is $\overline{H}$-invariant.
\end{proposition}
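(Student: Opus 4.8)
The plan is to reduce the statement to the elementary principle recalled in \Cref{rem:3points}: a projectivity of $\Pj^1$ fixing three distinct points is the identity. The whole argument rests on translating ``$\overline{H}$-invariance'' into ``being a fixed point of an induced projectivity''.

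First I would set up this dictionary. By \Cref{prop:quadricsA} the net $\Sigma_{\Calp}$ is $\overline{H}$-covariant, so \Cref{def:convariance} provides a well-defined automorphism of the linear system $\Sigma_{\Calp}$, namely $W\mapsto \Phi_* W-\overline{H}$. Since $\Phi_*$ is linear on divisor classes and subtracting the fixed divisor $\overline{H}$ corresponds to dividing the defining polynomials by a fixed factor, this automorphism is induced by a linear map on the three-dimensional space of quadrics cutting out $\Sigma_{\Calp}$; hence it is a projectivity $\tau$ of $\Sigma_{\Calp}\cong\Pj^2$. By the very definition of $\overline{H}$-invariance in \Cref{def:convariance}, a quadric $S\in\Sigma_{\Calp}$ is $\overline{H}$-invariant precisely when $\tau(S)=S$, i.e.\ when $S$ is a fixed point of $\tau$.

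Next I would restrict $\tau$ to the pencil. The quadrics $S_{\sigma_1}=\{s_1=0\}$ and $S_{\sigma_2}=\{s_2=0\}$ are $\overline{H}$-invariant by construction, hence fixed by $\tau$. As a projectivity of $\Pj^2$ fixing the two distinct points $[s_1]$ and $[s_2]$, $\tau$ sends the line $\Xi$ they span to $\overline{\tau[s_1]\,\tau[s_2]}=\Xi$, and therefore restricts to a projectivity $\psi:=\tau|_\Xi$ of $\Xi\cong\Pj^1$. In this language the claim becomes: $\psi=\Id_\Xi$ if and only if $\psi$ fixes the point $[s_1+s_2]\in\Xi$.

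Finally I would close both implications. The backward one is immediate: if $\Xi$ is pointwise $\overline{H}$-invariant then $\{s_1+s_2=0\}$, being a member of $\Xi$, is $\overline{H}$-invariant. For the forward one, note that $[s_1]$, $[s_2]$ and $[s_1+s_2]$ are pairwise distinct, because $s_1,s_2$ are linearly independent (they span the pencil), so neither $s_1+s_2=\lambda s_1$ nor $s_1+s_2=\mu s_2$ can hold. Thus if $\{s_1+s_2=0\}$ is $\overline{H}$-invariant, $\psi$ fixes three distinct points of $\Xi$, and \Cref{rem:3points} forces $\psi=\Id_\Xi$, i.e.\ every member of the pencil $\Xi$ is $\overline{H}$-invariant. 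The only genuinely delicate point is the identification in the second paragraph of $\overline{H}$-invariant quadrics with the fixed locus of an actual \emph{projectivity} $\tau$, rather than of a merely set-theoretic self-map; once the linearity of $\tau$ is in place, the rest is a one-line application of the three-fixed-points principle.
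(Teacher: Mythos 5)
Your proposal is correct and follows essentially the same route as the paper: the paper likewise observes that the $\overline{H}$-invariance of $S_{\sigma_1}$ and $S_{\sigma_2}$ makes $\Xi$ an $\overline{H}$-covariant pencil, obtains the induced automorphism $S\mapsto\Phi_*S-\overline{H}$ of $\Xi\cong\Pj^1$, and concludes via the three-fixed-points principle of \Cref{rem:3points}. Your version merely spells out in more detail the points the paper leaves implicit (that the induced map is a genuine projectivity and that $[s_1]$, $[s_2]$, $[s_1+s_2]$ are pairwise distinct), which is a welcome but not essentially different elaboration.
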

\begin{proof}
    We have
    \begin{equation}
        \Xi=\Set{\Set{\mu s_1+\lambda{s_2}=0}\subset\Pj^3|[\mu:\lambda]\in\Pj^1}.
    \end{equation}
    Since $S_{\sigma_1}$ and $S_{\sigma_2}$ are $\overline{H}$-invariant, the pencil $\Xi$ is a $\overline{H}$-covariant subspace of $\Sigma_{\Calp}$. Therefore, we obtain an automorphism
    \begin{equation}
     \begin{tikzcd}[row sep=tiny]
        \Xi \arrow{r} & \Xi\\
        S\arrow[mapsto]{r} &\Phi_* S- \overline{H} .
    \end{tikzcd}
    \end{equation}
    Now, the thesis follows from the fact that there is only one automorphism of $\Pj^1$ that fixes 3 points, city (see \Cref{rem:3points}).
\end{proof}

\begin{remark}
    \label{rem:notenough}
    Unfortunately, \Cref{fixlines} together with \Cref{fixpoints} are not enough to compute all the invariants as predicted at the beginning
    of this section. Indeed, two issues can occur:
    \begin{itemize}
        \item there are less than two $\overline{H}$-covariant pencils of quadrics,
        \item there are at least two $\overline{H}$-covariant pencils, 
        but the invariants produced are not functionally independent.
    \end{itemize}
    However, implementing the argument given in \Cref{fixpoints} with combinatoric techniques, it is possible to construct the invariants
of the maps appearing in \Cref{tab:excaseA} as a nonlinear combinations of $\overline{H}$-invariant quadrics. Their explicit
form is is presented in \Cref{tab:ivariantcaseA}. 
\end{remark}

We now comment some facts about the invariants listed in \Cref{tab:ivariantcaseA}.
\begin{itemize}
    \item[\textit{(i)}] The net $\Sigma_{\Calp} $ is pointwise 
    $\overline{H}$-invariant and the quadrics are enough to construct 
    the two expected invariants. This happens only in case \textit{(i)}.
    Moreover, from \Cref{lem:eulerdecomposition}, we have that the KHK discretisation of the Euler top fits in this class.
    \item[\textit{(ii)}] Although two cycle decompositions on $\Cale\cup\Calp$ are possible, 
    the rational functions in \Cref{tab:ivariantcaseA} are invariant in both cases.
    The two invariants are a ratio of quadrics and of quartics respectively.
    Since the ratio of quartics can be chosen as a square, it is easy to see that the
    function:
    \begin{equation}
    J^{\text{\textit{(ii)}}} = \frac{x_1x_2+x_3x_4}{x_1x_4+x_2x_3},
    \label{eq:J2ii}
    \end{equation}
    is an anti-invariant. That is, the function \eqref{eq:J2ii} is such that:
    \begin{equation}
        \begin{tikzcd}[column sep=huge]
            J \arrow[mapsto, "\Phi_{*}"]{r} & -J.
        \end{tikzcd}
        \label{eq:antinvariant}
    \end{equation}
    \item[\textit{(iii)}] The two invariants are a ratio of quadrics and of quartics respectively. Again, the ratio of quartics can be chosen as a square, so 
        that the function:
    \begin{equation}
        J^{\text{\textit{(iii)}}} = \frac{x_1 x_2+x_3 x_4}{(x_3-x_4)(x_1-x_2)},
    \label{eq:J2casea_1423}
    \end{equation}
    is an anti-invariant (see equation \eqref{eq:antinvariant}).
    \item[\textit{(iv)}] The two invariants are a ratio of quartics and 
        a ratio of sextics. We note that there is no anti-invariant, because the cycle 
        $(S_{14}\ S_{24}\ S_{34})$ does not produce $\overline{H}$-invariant quadrics (see
        \Cref{rem:3points}).
        On the other hand, taking the ratios of two $\overline{H}$-invariants coming respectively from the two 3-cycles $(S_{12}\ S_{23}\ S_{13})$ and
        $(S_{14}\ S_{24}\ S_{34})$,
        we get (see \Cref{rem:ellipticandnoint}) the following \emph{triple} of 3-invariants given by the following ratios
        of quadrics:
        \begin{equation}
            J^{\text{\textit{(iv)}}}_{1} = 
            \frac{(x_1+x_2) (x_3+x_4)}{(x_1-x_4)(x_2-x_3)},
            \quad
            J^{\text{\textit{(iv)}}}_{2} =
            -\frac{(x_1+x_4) (x_2+x_3)}{(x_1-x_3)(x_2-x_4)},
            \quad
            J^{\text{\textit{(iv)}}}_{3} =
            \frac{(x_1+x_3) (x_2+x_4)}{(x_1-x_2)(x_3-x_4)}.
            \label{eq:Jiv}
        \end{equation}
        Indeed, these three functions are cyclically permuted by the map.
        Clearly only two of them are functionally independent.
    \item[\textit{(v)}] The two invariants are both ratios of quartics.
        We note that there is no anti-invariant, because the cycle 
        $(S_{13}\ S_{24})$ does not produce $\overline{H}$-invariant quadrics (see
        \Cref{rem:3points}).
        On the other hand, from the 4-cycle $(S_{12}\ S_{23}\ S_{34}\ S_{14})$
        we get the pair of 4-invariants given by the following ratios
        of quadrics:
        \begin{equation}
            J^{\text{\textit{(v)}}}_{1} = 
            \frac{(x_1+x_4) (x_2+x_3)}{(x_1-x_4)(x_2-x_3)},
            \quad
            J^{\text{\textit{(v)}}}_{2} =
            \frac{(x_1+x_2) (x_3+x_4)}{(x_1-x_2)(x_3-x_4)}.
            \label{eq:Jv}
        \end{equation}
        Indeed, $J^{\text{\textit{(v)}}}_{1}$ and $J^{\text{\textit{(v)}}}_{2}$ are such that:
        \begin{equation}
            \begin{tikzcd}[column sep=huge]
            J_{i}^{\text{\textit{(v)}}} \arrow[mapsto, "\Phi_{*}"]{r} & J_{i+1} ^{\text{\textit{(v)}}} \arrow[mapsto, "\Phi_{*}"]{r} & \frac{1}{J_{i}^{\text{\textit{(v)}}}};
        \end{tikzcd}
        \quad\mbox{for } i=1,2,
        \end{equation}
        where the indices are taken modulo 2.
\end{itemize}

\begin{table}[ht]
    \begin{tabular}{c|c}
        \toprule
        &Invariants
        \\
        \midrule[\heavyrulewidth]
         \mbox{\textit{(i)}} &
        {\small $\displaystyle\frac{(x_3+x_4)(x_1+x_2)}{(x_2-x_3)(x_1-x_4)}$,
         $\displaystyle \frac{ (x_2+x_3)(x_1+x_4)}{(x_2-x_3)(x_1-x_4)} $}
         \\[2ex]
        \midrule
         
         \mbox{\textit{(ii)}} &
        {\small $\displaystyle\frac{x_1x_2+x_3x_4}{(x_3+x_4)(x_1+x_2)}$,
         $\displaystyle\left(\frac{(x_3-x_4)(x_1-x_2)}{(x_3+x_4)(x_1+x_2)}\right)^{2}$}
         \\[2ex]
        \midrule
        
        \mbox{\textit{(iii)}} &  
        
        {\small$\displaystyle
        \begin{gathered}
        \frac{(x_3+x_4) (x_1+x_2)}{(x_3-x_4) (x_1-x_2)}, 
         \displaystyle\left(\frac{x_1 x_2+x_3 x_4}{(x_3-x_4)(x_1-x_2)}\right)^2
        \end{gathered}$}
         
         \\[2ex]
        \midrule
        
         \mbox{\textit{(iv)}} &
         
        {\small$\displaystyle
        \begin{gathered}
        \frac{(x_1 x_2+x_1 x_3+x_1 x_4+x_2 x_3+x_2 x_4+x_3 x_4)^2}{(x_1 x_2+x_3 x_4)^2+(x_1 x_3+x_2 x_4)^2+(x_1 x_4+x_2 x_3)^2},
        \\
         \displaystyle\frac{(x_3+x_4) (x_1+x_2)  (x_2+x_4) (x_1+x_3) (x_2+x_3) (x_1+x_4)}{(x_3-x_4) (x_2-x_4) (x_2-x_3) (x_1-x_4) (x_1-x_3) (x_1-x_2)}
        \end{gathered}$}
         \\[2ex]
        \midrule
        
         \mbox{\textit{(v)}} &
         
        {\small$\displaystyle
        \begin{gathered}
        \frac{(x_2+x_4)^2 (x_1+x_3)^2+(x_2-x_4)^2 (x_1-x_3)^2}{(x_3+x_4) (x_1+x_2) (x_3-x_4) (x_1-x_2)-(x_2+x_3) (x_1+x_4) (x_2-x_3) (x_1-x_4)},
        \\
         \displaystyle\frac{(x_2+x_4) (x_1+x_3)(x_2-x_4)(x_1-x_3)}{\ssum{\circ,\bullet \in\Set{\pm 1}}{}\circ(x_1 + \circ x_2)(x_3 + \circ x_4)(x_1 + \bullet x_4)(x_2 + \bullet x_3) }
        \end{gathered}$}
         \\[3ex]
         \bottomrule
    \end{tabular}
    \caption{A resuming table of the invariants of the maps 
    $\Phi=g \circ \crem_{3}$ for $g$ chosen from \Cref{tab:excaseA}.}
    \label{tab:ivariantcaseA}
\end{table}

Finally, we highlight that, even if there are 96 elements of type \textit{(A)} swapping $\Cale$ and $\Calp$, only the invariants in \Cref{tab:ivariantcaseA} can occur. {We saw this via direct check on all the 96 elements of type \textit{(A)}} swapping $\Cale$ and $\Calp$. Thus, for a rational function, being invariant only depends upon the action of $g$ on $S_\Calq$.

\begin{remark}
    \label{ex:celledoni}
    The results of this section suggest a relation 
    with the KHK discretisation of another physically relevant class of systems,
    namely the \emph{quadratic three-dimensional Nambu systems} 
    \cite{Nambu1973,Vaisman1999}. That is, the system of ODEs
    \begin{equation}
        \vec{\dot{x}} = \grad_{\vec{x}} H_{1}(\vec{x}) \cross \grad_{\vec{x}} H_{2} (\vec{x}),
        \label{eq:nambuq}
    \end{equation}
    where, for $i=1,2$, $H_{i}(\vec{x})=\vec{x}^{T} A_{i} \vec{x}$ is a
    Nambu--Hamiltonian function. The system \eqref{eq:nambuq} is 
    clearly integrable in the sense of Liouville, since the two Nambu--Hamiltonians are first integrals. Moreover, the system 
    \eqref{eq:nambuq} is a generalisation of the Euler top 
    \eqref{eq:euler}, obtained when $A_{1}$ and $A_2$ are diagonal
    (see \cite{CelledoniMcLachlanOwrenQuispel2014}).
    It was proven in \cite{CelledoniMcLachlanOwrenQuispel2014} that the 
    KHK discretisation of the system \eqref{eq:nambuq} is Liouville
    integrable with the two following modified Nambu--Hamiltonians:
    \begin{equation}
        \label{eq:Hinambu}
        \widetilde{H}_i(\vec{x}) = \frac{H_i(\vec{x})}{1+4h^2 H_3(\vec{x})},\quad \mbox{ for }i=1,2,
        \mbox{ and }
        H_{3} (\vec{x})=\vec{x}^{T} A_1 \adj (A_2) A_1\vec{x}.
    \end{equation}
    The modified Nambu--Hamiltonians \eqref{eq:Hinambu} are ratio of
    quadratic polynomials.     
    Let $\Phi_{h}^{(N)},({\Phi_{h}^{(N)}})^{-1}\in\Bir( \Pj^{3})$ be the homogeneous maps 
    associated with the KHK discretisation of \Cref{eq:nambuq} (see \Cref{sec:motivations}).
    We conjecture that, up to conjugation by a projectivity, the map $\Phi_{h}^{(N)}$
    is of type \emph{(A)-(i)}, i.e.\ it exists an analogue of \Cref{lem:eulerdecomposition} 
    for the map $\Phi_{h}^{(N)}$. Unfortunately, in this case
    the computations are more intricate than in the case of the Euler top. 
    So, we conclude this subsection by giving evidences supporting thist last claim,
    but we leave a complete proof as a subject of further research.

    We start noticing that since $\deg\Phi_{h}^{(N)}=\deg({\Phi_{h}^{(N)}})^{-1}=3$ the degree
    of the polynomials $\kappa$ and $\lambda$ in \eqref{eq:kappadef} is eight.
    From a direct computation we see that these polynomials are squares,
    i.e. $\kappa=(\tilde{\kappa})^2$ and $\lambda=(\tilde{\lambda})^2$.
    Another direct check shows that $(\tilde{\kappa})^2$ and $(\tilde{\lambda})^2$ admit a factorisation of the following form:
    \begin{equation}
        \label{eq:nambufact}
        \kappa = \left(\kappa_{1}\kappa_{2}\kappa_{3}\kappa_{4} \right)^{2},
        \quad 
        \lambda=\left(\lambda_{1}\lambda_{2}\lambda_{3}\lambda_{4} \right)^{2},
    \end{equation}
    where $\deg\kappa_{i}=\deg\lambda_{i}=1$, for $ i=1,2,3,4$.
    For the sake of readability, we omit the explicit  form of the polynomials $\kappa_i$ and $\lambda_i$, $i=1,2,3,4$, because they are too cumbersome.
    
    Let us consider the net generated by the invariants $\tilde{H}_i$ in
    \Cref{eq:Hinambu}:
    \begin{equation}
        \label{eq:Qnambu}
        \Sigma^{(N)} =
        \Set{Q_{\mu,\nu,\xi}\subset \Pj^3 | 
            \left[\mu:\nu:\xi\right]\in\Pj^2, \;
            Q_{\mu,\nu,\xi}=\left\{\mu H_1 +\nu H_2 +\xi (x_4^2+4h^2H_3)=0 \right\}
        } .
    \end{equation}
It is generic in the sense that its base locus consists of eight distinct points. This check can be done via Macaulay2 \cite{M2}.

    The matrices $A_1$ and $A_2$ are positive definite symmetric quadratic forms. As a consequence, up to orthogonal linear maps (see \cite[Section 2.2]{CelledoniMcLachlanOwrenQuispel2014}), we can assume $A_1=\Id$ and
    \begin{equation}
        A_2 = 
        \begin{pmatrix}
            a_1 & a_2/2 & a_3/2
            \\
            a_2/2 & a_4 & a_5/2
            \\
            a_3/2 & a_5/2 & a_6
        \end{pmatrix},
    \end{equation}
    for some $a_i\in \C$, $i=1,\ldots,6$. 
    Then, the quadratic form $Q_{\mu,\nu,\xi}$ takes the
    form:
    \begin{equation}
        Q_{\mu,\nu,\xi}=
        [x_1:x_2:x_3:x_4]
        \mathcal{M}(\mu,\nu,\xi)
        \begin{bmatrix}
            x_1
            \\
            x_2
            \\
            x_3
            \\
            x_4
        \end{bmatrix},
    \end{equation}
    where $\mathcal{M}(\mu,\nu,\xi)=\mathcal{M}_{0}(\mu,\nu,\xi) +\xi h^{2}\mathcal{M}_{2}(\mu,\nu,\xi)$
    and the matrices $\mathcal{M}_{0}(\mu,\nu,\xi)$ and $\mathcal{M}_{2}(\mu,\nu,\xi)$ are
    given by:
    \begin{subequations}
        \begin{align}
            \mathcal{M}_{0}(\mu,\nu,\xi) &= 
            \begin{pmatrix}
                \mu+\nu a_1 & a_2\nu /2 & \nu a_3/2  & 0
                \\ 
                a_2 \nu/2 & \mu+\nu a_4 & a_5 \nu/2 &0
                \\
                a_3 \nu /2 & a_5 \nu/2 & \mu+\nu a_6 & 0
                \\
                0&0&0&\xi
            \end{pmatrix},
            \label{eq:M0}
            \\
            \mathcal{M}_{2}(\mu,\nu,\xi) &= 
            \begin{pmatrix}
                4a_4 a_6 - a_5^{2} & 
                -2a_2 a_6+ a_3 a_5 &
                a_2 a_5- 2a_3 a_4 & 
                0
                \\ 
                -2a_2a_6+a_3 a_5& 
                4 a_1a_6- a_3^{2}&
                -2 a_1 a_5+ a_3 a_2&
                0
                \\ 
                a_2 a_5-2 a_3 a_4&
                -2 a_1 a_5+ a_3 a_2 &
                4  a_1 a_4- a_2^{2} &
                0
                \\ 
                \noalign{\medskip}0&0&0&0
            \end{pmatrix}.
            \label{eq:M2}
        \end{align}
        \label{eq:Mnambu}%
    \end{subequations}

    We study now the base locus of the linear system $\Sigma^{(N)}$
    \eqref{eq:Qnambu}. To do so, we first show that there exist six
    reducible members of the net $\Sigma^{(N)}$. These members will
    consist of twelve distinct planes. Recall that the quadratic form
    associated to a symmetric matrix is factorisable if and only if its
    $3\times3$ minors vanish, i.e. its rank is two. We omit the explicit
    computations since they are rather cumbersome, yet straightforward.
    However, we present the solutions and comment them.

    There exist six solutions, which we divide in two families, namely:
    \begin{subequations}
        \begin{align}
            [\mu_{i}^{(1)}:\nu_{i}^{(1)}:\xi_{i}^{(1)}]
            &= [\gamma_{i}:1:0], \quad i=1,2,3, \\
            [\mu_{i}^{(2)}:\nu_{i}^{(2)}:\xi_{i}^{(2)}] &=
            [\pi_{2}(\delta_{i}):2\delta_{i}\pi_{1}(\delta_{i}):2\pi_{1}(\delta_{i})],
            \quad i=1,2,3, \label{eq:solmin1}
        \end{align} \label{eq:solsmin}%
    \end{subequations} where $\gamma_{i}$ are the roots of the polynomial equation:
    \begin{equation}
        \begin{aligned}
            \gamma^3&+(a_1+a_4+a_6) \gamma^2 
            \\ 
            +\left(a_1 a_4+a_1 a_6+a_4 a_6- \frac{a_2^2+a_3^2+ a_5^2}{4}\right) \gamma 
            \\
            &+a_1 a_4 a_6+\frac{a_2 a_3 a_5-a_1 a_5^2-a_2^2 a_6-a_3^2 a_4}{4}=0,
        \end{aligned} \label{eq:polgamma}
    \end{equation} 
    $\delta_{i}$ are the roots of the polynomial equation:
    \begin{equation}
        \begin{aligned}
            \delta^3 &-4 h^2 (a_1+a_4+a_6) \delta^2 +4 h^4 \left[4 (a_1
            a_4+ a_1 a_6+ a_4 a_6)-a_2^2-a_3^2-a_5^2\right] \delta \\
            &-16 h^6 (4 a_1 a_4 a_6-a_1 a_5^2-a_2^2 a_6+a_2 a_3 a_5-a_3^2
            a_4) =0,
        \end{aligned} \label{eq:poldelta}
    \end{equation} 
    and \begin{subequations}
    \begin{align}
        \pi_{1}(\delta_{i}) &= \delta_i a_5-2 h^2 (2 a_1 a_5-a_2 a_3),
        \label{eq:pi1} \\ \pi_{2}(\delta_{i}) &\begin{aligned}[t]
            =& -(2 a_1 a_5-a_2 a_3) \delta_i^2 \\ &+2 h^2 \left[
                \begin{gathered} (4 a_1^2 +a_2^2 +a_3^2 +a_5^2)a_5 \\
                -2 (a_1 a_2 a_3 +a_2 a_3 a_4 +a_2 a_3 a_6 + 2 a_4
                a_5 a_6) \end{gathered}
            \right] \delta_i \\ &+8 a_5 h^4 (4 a_1 a_4 a_6-a_1
            a_5^2-a_2^2 a_6+a_2 a_3 a_5-a_3^2 a_4).
        \end{aligned} \label{eq:pi2}
    \end{align} \label{eq:pols}%
    \end{subequations} 
    As a consequence, the net $\Sigma^{(N)}$
    contains exactly six reducible members. So, we end up again with a
    configuration of twelve planes. 
    A direct symbolic computation to see how the eight base points are 
    arranged on the twelve planes was not possible because of the high
    complexitly of the involved computations.

    So, to prove that up to conjugation the map $\Phi_{h}^{(N)}$ is of
    type \emph{(A)-(i)}, it remains to understand how the eight points
    are arranged with respect to the twelve planes.
\end{remark}

\subsection{Case \textit{(B)}} 

In this section, we will adopt the same notation as in
\Cref{subsec:casea}. Moreover, we will denote by $T_{ij}$, for $1\le
i< j\le 4$, the unique quadric of $\Sigma_\Calq$ passing trough $p_i$
and $p_j$, and by $\Cals$ the set
\begin{equation}
    \Cals=\Set{S_{ij}}_{1\le i< j\le 4}\cup \Set{T_{ij}}_{1\le i< j\le 4}.
\end{equation}

To simplify the description of the invariants we use the following
general result.

\begin{lemma}
    \label{lem:commutation}
    Let $h\in\Calc$ be an element of type \textit{(B)}. Then, $h$
    commutes with $\crem_3$, i.e.  $h\circ\crem_3 =\crem_3\circ h$.
\end{lemma}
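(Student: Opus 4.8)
The plan is to use that, by \Cref{possibilemma}, an element $h\in\Calc$ of type \textit{(B)} is represented by a signed permutation matrix, together with the observation that $\crem_3$ is assembled coordinate-wise from the reciprocal map $t\mapsto t^{-1}$, which is insensitive both to permuting the coordinates and to flipping their signs. This reduces the whole statement to two elementary commutations.

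First I would factor $h$ as $h=D\circ P$, where $P([x_1:x_2:x_3:x_4])=[x_{\sigma(1)}:x_{\sigma(2)}:x_{\sigma(3)}:x_{\sigma(4)}]$ is an ordinary permutation matrix for some $\sigma\in\SSym_4$, and $D=\diag(\epsilon_1,\epsilon_2,\epsilon_3,\epsilon_4)$ with $\epsilon_i\in\Set{\pm1}$ collects the signs. Every signed permutation matrix admits such a decomposition, so it suffices to check that $P$ and $D$ each commute with $\crem_3$; associativity of composition then yields
\[
h\circ\crem_3=D\circ P\circ\crem_3=D\circ\crem_3\circ P=\crem_3\circ D\circ P=\crem_3\circ h.
\]

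Then I would verify the two commutations by direct evaluation on the dense open locus where the maps are defined (equality there suffices, the maps being birational). For the diagonal part, using $\epsilon_i^{-1}=\epsilon_i$, one has
\[
\crem_3\bigl(D\cdot[x_1:\cdots:x_4]\bigr)=[(\epsilon_1 x_1)^{-1}:\cdots:(\epsilon_4 x_4)^{-1}]=[\epsilon_1 x_1^{-1}:\cdots:\epsilon_4 x_4^{-1}]=D\cdot\crem_3([x_1:\cdots:x_4]).
\]
For the permutation part, permuting coordinates and then inverting them agrees with inverting and then permuting:
\[
\crem_3\bigl(P\cdot[x_1:\cdots:x_4]\bigr)=[x_{\sigma(1)}^{-1}:\cdots:x_{\sigma(4)}^{-1}]=P\cdot\crem_3([x_1:\cdots:x_4]).
\]
This establishes both commutations, and hence the lemma.

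I do not expect a genuine obstacle, since the computation is entirely elementary; the only points requiring a little care are fixing the convention for how a signed permutation matrix acts on homogeneous coordinates and recording the single arithmetic fact $\epsilon_i^{-1}=\epsilon_i$ (valid precisely because the entries are $\pm1$), which is exactly what lets the sign flips pass through the reciprocal map.
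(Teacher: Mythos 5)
Your proof is correct and takes essentially the same route as the paper: a direct coordinate computation showing that the reciprocal map commutes with signed coordinate permutations because $(\pm x)^{-1}=\pm x^{-1}$. The only cosmetic difference is that you factor $h$ into a diagonal sign matrix composed with a permutation matrix and check the two commutations separately, whereas the paper tracks the composite action $x_i\mapsto\pm x_{\sigma(i)}$ in one step.
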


\begin{proof}
    The projectivity $h$ can be represented by a permutation matrix
    with signs (see \Cref{possibilemma}). Let us denote by $\sigma$
    the permutation of the coordinates $\Set{1,\ldots,4}$ induced by
    $\sigma$. then, we have
    \begin{subequations}
        \begin{gather}
            \begin{tikzcd}[row sep=tiny, ampersand replacement=\&]   
                x_i\arrow[mapsto,"h"]{r} \& \pm x_{\sigma(i)} \arrow[mapsto,"\crem_3"]{r} \&  \pm \frac{1}{x_{\sigma(i)} }
            \end{tikzcd}    
            \\
            \begin{tikzcd}[row sep=tiny, ampersand replacement=\&]   
            x_i\arrow[mapsto,"\crem_3 "]{r} \& \frac{1}{x_i}\arrow[mapsto,"h"]{r} \&\pm \frac{1}{x_{\sigma(i)} }
            \end{tikzcd}
        \end{gather}
    \end{subequations}
    which completes the proof.
\end{proof}

\begin{lemma}
    \label{prop:conjB}
    Let $g_1,g_2\in\CB\subset \Calc$ be two elements of type \textit{(B)}
    which are conjugated in $\CB$, i.e. there exists $h\in\CB$ such that
    $g_2=h\circ g_1 \circ h^{-1}$. Let also $R_1\in\C(x_1,\ldots,x_4)$
    be an invariant of $\Phi_1=g_1\circ \crem_3$. Then, $R_2=R_1\circ h^{-1}$
    is an invariant for $\Phi_2=g_2\circ \crem_3$.
\end{lemma}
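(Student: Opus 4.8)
The plan is to reduce the statement to the single observation that $\Phi_2$ is nothing but the conjugate of $\Phi_1$ by the projectivity $h$, after which invariance transfers automatically by precomposing with $h^{-1}$.

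First I would invoke \Cref{lem:commutation}: since $h\in\CB$ is of type \textit{(B)}, it commutes with $\crem_3$, and the same then holds for $h^{-1}$. Indeed, conjugating the identity $h\circ\crem_3=\crem_3\circ h$ yields $\crem_3\circ h^{-1}=h^{-1}\circ\crem_3$. Feeding the hypothesis $g_2=h\circ g_1\circ h^{-1}$ into the definition of $\Phi_2$ and sliding $h^{-1}$ past $\crem_3$, I would obtain
\begin{equation}
\Phi_2 = g_2\circ\crem_3 = h\circ g_1\circ h^{-1}\circ\crem_3 = h\circ g_1\circ\crem_3\circ h^{-1} = h\circ\Phi_1\circ h^{-1}.
\end{equation}
Thus $\Phi_2$ and $\Phi_1$ are genuinely conjugate via $h$ (not merely in the weaker sense of $g_1,g_2$ being conjugate as projectivities).

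It then remains to verify the invariance of $R_2=R_1\circ h^{-1}$. Since $R_1$ is an invariant of $\Phi_1$, we have $R_1\circ\Phi_1\equiv R_1$, and hence
\begin{equation}
R_2\circ\Phi_2 = R_1\circ h^{-1}\circ\left(h\circ\Phi_1\circ h^{-1}\right) = R_1\circ\Phi_1\circ h^{-1} \equiv R_1\circ h^{-1} = R_2,
\end{equation}
which is precisely the assertion that $R_2$ is an invariant of $\Phi_2$.

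There is essentially no hard step here: the entire content is packaged in \Cref{lem:commutation}, which is what lets me conjugate the full composite $g_1\circ\crem_3$ by $h$ at the cost of only conjugating $g_1$, because $h$ passes through $\crem_3$. The one point to watch is that the commutation requires $h$ to be of type \textit{(B)} specifically (membership in $\Calc$ alone would not suffice), and that the composition order is tracked correctly when cancelling $h^{-1}\circ h$.
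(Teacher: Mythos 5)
Your proof is correct and follows essentially the same route as the paper's: both rest on \Cref{lem:commutation} to slide $h^{-1}$ past $\crem_3$, so that $\Phi_2=h\circ\Phi_1\circ h^{-1}$ and the invariance of $R_2=R_1\circ h^{-1}$ follows from the same chain of equalities. Your reorganisation (first exhibiting the genuine conjugacy of the maps, then transferring the invariant) is only a cosmetic repackaging of the paper's one-line computation.
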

\begin{proof} The proof consists in the following chain of equalities
\begin{equation}
    \begin{aligned}[b]
        R_2\circ \Phi_2 &\equiv R_1\circ h^{-1}\circ g_2\circ\crem_3\equiv R_1\circ h^{-1}\circ h\circ g_1\circ h^{-1}\circ\crem_3
        \\
        &\equiv R_1\circ g_1\circ\crem _3\circ h^{-1}\equiv R_1\circ\Phi_1\circ h^{-1}\equiv R_2.
    \end{aligned}
\end{equation}
Notice that the last equality is a consequence of \Cref{lem:commutation}.
\end{proof}

To simplify the description of the invariants in case \emph{(B)}
we also use the following result.

\begin{lemma}
    \label{cor:conjBk} 
    Let $g\in\CB\subset \Calc$ be an
    element of type \textit{(B)} and let $R\in\C(x_1,\ldots,x_4)$ be an invariant of 
    $\Phi=g\circ \crem_3$. Consider the birational map $\Phi^{(k)}=g^k \circ \crem_3$. Then, the following rational function
    \begin{equation}
        \label{eq:Rt}
        \widetilde R = 
            {\crem_3}_{*}(R) + R,
    \end{equation}
    is an invariant of $\Phi^{(k)}$ for all $k\ge 0$. Moreover, if $k$ is odd also $R$ is an invariant for $\Phi^{(k)}$. While, if $k$ is even and  $\widetilde  R=0$, then $R^2$ is an invariant of $\Phi^{(k)}$. 
\end{lemma}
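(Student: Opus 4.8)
The plan is to exploit two structural facts about type-\textit{(B)} maps: that $\crem_3$ is an involution, so $\crem_3^2=\Id$, and that by \Cref{lem:commutation} every element of type \textit{(B)}, in particular $g$ and all of its powers $g^k$, commutes with $\crem_3$. Writing the invariance of $R$ for $\Phi=g\circ\crem_3$ as $R\circ\Phi\equiv R$, i.e.\ $R\circ g\circ\crem_3\equiv R$, and composing on the right with $\crem_3$, I first reformulate the hypothesis as the single identity $R\circ g\equiv R\circ\crem_3$ in $\C(x_1,\ldots,x_4)$. Since $\crem_3$ is an involution, the function in \eqref{eq:Rt} is simply $\widetilde R={\crem_3}_*(R)+R\equiv R\circ\crem_3+R$.

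The key preliminary step is to establish $R\circ g^2\equiv R$, which follows from the rewritten hypothesis together with the commutation $\crem_3\circ g\equiv g\circ\crem_3$ and $\crem_3^2=\Id$:
\begin{equation}
R\circ g^2\equiv (R\circ g)\circ g\equiv (R\circ\crem_3)\circ g\equiv R\circ g\circ\crem_3\equiv (R\circ\crem_3)\circ\crem_3\equiv R.
\end{equation}
Consequently $R\circ g^k\equiv R$ for $k$ even and $R\circ g^k\equiv R\circ g\equiv R\circ\crem_3$ for $k$ odd. This parity dichotomy drives all three assertions.

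For the odd case I observe that, since $g$ and $\crem_3$ commute and $\crem_3^2=\Id$, one has $\Phi^{(k)}=g^k\circ\crem_3=(g\circ\crem_3)^k=\Phi^k$ whenever $k$ is odd; as $R$ is invariant for $\Phi$ it is invariant for every power $\Phi^k$, hence for $\Phi^{(k)}$. For the covariance statement I compute, using the commutation and $\crem_3^2=\Id$,
\begin{equation}
\widetilde R\circ\Phi^{(k)}\equiv \widetilde R\circ g^k\circ\crem_3\equiv R\circ g^k+R\circ g^k\circ\crem_3,
\end{equation}
and then insert the dichotomy above: for $k$ even the right-hand side is $R+R\circ\crem_3=\widetilde R$, while for $k$ odd it is $R\circ\crem_3+R=\widetilde R$, so $\widetilde R$ is an invariant of $\Phi^{(k)}$ for every $k$. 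Finally, for even $k$ with $\widetilde R=0$ the relation $R\circ\crem_3\equiv -R$ gives $R\circ\Phi^{(k)}\equiv R\circ g^k\circ\crem_3\equiv R\circ\crem_3\equiv -R$, whence $R^2$ is fixed by $\Phi^{(k)}$.

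The computations are short, and the only genuine care needed is bookkeeping: keeping the order of composition straight and using the paper's convention $\Phi_*=(\Phi^{-1})^*$ consistently, so that invariance reads $R\circ\Phi\equiv R$ and $\widetilde R\equiv R\circ\crem_3+R$. I expect the single load-bearing step to be the identity $R\circ g^2\equiv R$; once it and the resulting parity dichotomy are in hand, the three claims are immediate specialisations rather than separate arguments.
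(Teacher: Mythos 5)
Your proof is correct and rests on exactly the same ingredient as the paper's, namely \Cref{lem:commutation} (the commutation of $g^k$ with $\crem_3$) combined with $\crem_3$ being an involution; the paper's own proof is a one-line deferral to that lemma, and your argument is a faithful, complete expansion of it. The load-bearing identity $R\circ g^2\equiv R$ and the resulting parity dichotomy are precisely the bookkeeping the paper leaves implicit, and all three claims follow as you state.
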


\begin{proof}
    The proof follows from \Cref{lem:commutation} which states that
    $g^k$ and $\crem_3$ commute.
\end{proof}

\begin{remark}
    \Cref{lem:commutation,prop:conjB,cor:conjBk} hold in general on any $\Pj^M$ by considering permutation matrices with signs of size $(M+1)\times(M+1)$ and the $M$-dimensional standard Cremona transformation $\crem_M$.
\end{remark}

\begin{remark}
\label{rem:nuoviinvarianti}
    We remark that, in the case in which the invariant $\widetilde  R$ given by \Cref{cor:conjBk} vanishes, we have that $R$ is
    an anti-invariant for $\Phi^{(k)}$ when $k$ is even. Moreover, somentimes, it can happen that $\widetilde{R}= R$, i.e. $R$ is invariant, also for $k$ even and not just for the odd case. 
    
    Finally, we observe that, given a set of $m$ functionally independent
    invariants $\Set{R_i}_{i=1}^m$ of $\Phi$,  it is not guaranteed
    that the set of invariants constructed in \Cref{cor:conjBk} are still functionally independent.
\end{remark}

As a consequence of \Cref{prop:conjB,cor:conjBk}, if one knows the invariants of some $g\circ\crem_3
$ then one can recover the invariants of all the maps of the form
$h\circ\crem_3$ for $h\in \CB$ in the same conjugacy class of $g$ or, for $h\in \CB$ of the form $g^k$ for some $k\ge 0$.
A direct computation  tells us that there are exactly 14 conjugacy
classes in $\CB$. One representative for each of them is given in
\Cref{tab:excaseB}, while the explicit form of the invariants is
shown in \Cref{tab:ivariantcaseB}. To shorten the latter, we will give a set of invariants that suffices to compute all the others via \Cref{cor:conjBk}. In \Cref{tab:relationB} we will show to which conjugacy classes belong the second power of the projectivities involved in \Cref{tab:excaseB}. Following \Cref{rem:nuoviinvarianti}, the invariants in \Cref{tab:ivariantcaseB} are chosen in a way that the invariants obtained via \Cref{cor:conjBk} are functionally independent. We checked this via a case-by-case analysis performed via computer algebra.

In the next proposition we summarise the fact that, in order to to build the
invariants, we only need elements of $\Sigma_{\mbox{\tiny\textit{(B)}}}$.

\begin{proposition}\label{bastasigmab}
    Let us consider $\Phi = g\circ \crem_3\in\Bir(\Pj^3)$, with $g\in\CB$.
    Then, the invariants of $\Phi$ are obtained as a suitable (nonlinear)
    combination of the elements of $\Sigma_{\mbox{\tiny\textit{(B)}}}$.
\end{proposition}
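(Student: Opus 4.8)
The plan is to combine the two group-theoretic reductions already available for type \textit{(B)} with the explicit tables, and to upgrade them to the assertion that the whole invariant field is generated inside the algebra spanned by $\Sigma_{\mbox{\tiny\textit{(B)}}}$. The structural input I would invoke first is that $\Phi=g\circ\crem_3$ squares to a projectivity. Indeed, $\crem_3$ is an involution and, by \Cref{lem:commutation}, $g$ commutes with $\crem_3$, so $\Phi^{2}=g\circ\crem_3\circ g\circ\crem_3=g^{2}\circ\crem_3^{2}=g^{2}$. Consequently every invariant $R$ of $\Phi$ is in particular invariant under the finite-order linear map $g^{2}$, and the $\Phi$-invariant field is the subfield of the $g^{2}$-invariant field cut out by the extra relation $R\circ\Phi=R$. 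This alone forces any invariant to be assembled from covariant linear systems of quadrics, namely those classified in \Cref{covcasec,covac2}.

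Next I would reduce to a finite list of maps. By \Cref{prop:conjB}, conjugating $g$ by an element $h\in\CB$ transforms invariants via $R\mapsto R\circ h^{-1}$; since $h$ is a signed permutation it permutes $\Cale$ and therefore preserves the linear system $\Sigma_{\mbox{\tiny\textit{(B)}}}$ of quadrics through $\Cale$, together with the algebra it generates. Hence ``having invariants expressible through $\Sigma_{\mbox{\tiny\textit{(B)}}}$'' is a conjugacy-invariant property, and it suffices to verify it on one representative of each of the $14$ conjugacy classes of $\CB$ listed in \Cref{tab:excaseB}. In the same spirit, \Cref{cor:conjBk} produces the invariants of $g^{k}\circ\crem_3$ through the operator $R\mapsto{\crem_3}_{*}(R)+R$; because ${\crem_3}_{*}$ preserves $\Sigma_{\mbox{\tiny\textit{(B)}}}$ by \Cref{rem:CREMONACASEC}, this operator never leaves the $\Sigma_{\mbox{\tiny\textit{(B)}}}$-algebra, so the powers $g^{k}$ require no separate treatment.

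For each of the $14$ representatives I would then construct the invariants by the recipe of \Cref{rem:ellipticandnoint}. By \Cref{rem:CREMONACASEC} the quadrics that are $\overline{H}$-invariant for $\crem_3$ are exactly the members of the two nets $\Sigma_{\Calp}$ and $\Sigma_{\Calq}$ of \Cref{def:nets}, both contained in $\Sigma_{\mbox{\tiny\textit{(B)}}}$; the $\overline{H}$-invariant quadrics for $\Phi$ are those among these that are in addition fixed by $g$. Selecting pencils of such quadrics and forming ratios of their members yields the functions recorded in \Cref{tab:ivariantcaseB}, which are manifestly (nonlinear) combinations of the six generators $x_ix_j$ of $\Sigma_{\mbox{\tiny\textit{(B)}}}$.

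The delicate point, and the step I expect to be the main obstacle, is completeness: I must rule out that the auxiliary covariant systems of \Cref{covac2}, whose general members are cones and hence do \emph{not} lie in $\Sigma_{\mbox{\tiny\textit{(B)}}}$, contribute invariants functionally independent from the ones above. Here I would argue that any invariant coming from such a system is in particular $g^{2}$-invariant, so its numerator and denominator, once expanded, fall into the degree-doubling image of $\Sigma_{\mbox{\tiny\textit{(B)}}}$ via identities such as $x_i^{2}\,x_jx_k=(x_ix_j)(x_ix_k)$; after forming the ratio one lands back in the field generated by $\Sigma_{\mbox{\tiny\textit{(B)}}}$. Since a purely formal version of this reduction is awkward, I would conclude, as the authors do, with a case-by-case check over the $14$ representatives of \Cref{tab:ivariantcaseB} using computer algebra, confirming both that these invariants are functionally independent and that they generate every invariant of $\Phi$.
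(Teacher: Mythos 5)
Your overall skeleton coincides with what the paper actually does for \Cref{bastasigmab}: reduce to the $14$ conjugacy classes of $\CB$ via \Cref{prop:conjB}, handle powers via \Cref{cor:conjBk}, construct the invariants from $\overline{H}$-invariant quadrics inside $\Sigma_{\mbox{\tiny\textit{(B)}}}$ following \Cref{rem:ellipticandnoint} (using that, by \Cref{rem:CREMONACASEC}, the $\crem_3$-invariant quadrics are exactly $\Sigma_{\Calp}\sqcup\Sigma_{\Calq}\subset\Sigma_{\mbox{\tiny\textit{(B)}}}$), and settle completeness by an explicit case-by-case computer-algebra verification over the representatives of \Cref{tab:excaseB}. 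The paper offers no further conceptual proof: the proposition is stated as a summary of \Cref{tab:ivariantcaseB,tab:relationB} and of the checks described around them, so your endpoint is the same as theirs.

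There is, however, one step in your write-up that would fail as a deduction. From $\Phi^{2}=g^{2}$ (which is correct, by \Cref{lem:commutation} and the involutivity of $\crem_3$) you infer that every invariant of $\Phi$ is ``forced to be assembled from covariant linear systems of quadrics'' as classified in \Cref{covcasec,covac2}. Invariance under the finite-order projectivity $g^{2}$ imposes no a priori degree bound on an invariant and does not restrict it to arise from quadric systems: those systems are a device for \emph{producing} invariants via \Cref{rem:ellipticandnoint}, not a classification of all of them. Relatedly, your reduction of a hypothetical invariant coming from the cone systems of \Cref{covac2} to the field generated by $\Sigma_{\mbox{\tiny\textit{(B)}}}$ through identities such as $x_i^{2}x_jx_k=(x_ix_j)(x_ix_k)$ is not conclusive: for instance $x_2^{4}$, which occurs in products of members of those webs, is not a polynomial in the off-diagonal monomials $x_ix_j$, so numerator and denominator of such a ratio need not land in the $\Sigma_{\mbox{\tiny\textit{(B)}}}$-algebra termwise. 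Since you ultimately discharge completeness by the same finite verification the authors perform, the conclusion stands, but these two intermediate claims should be removed or explicitly downgraded to heuristics.
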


\begin{table}[ht]
    \centering
      \begin{tabular}{c|cccc}
    \hline&
$\ord$& \hspace{-10pt} $g$& {\small Action on $\Calr$}& \hspace{-15pt}{\small Action on $\Cals$ }\\
    \hline
 \hspace{-10pt} \mbox{\small\textit{(i)}} & 1&  \hspace{-10pt}{\footnotesize $
        \begin{bmatrix}
            1&\hspace{-7pt}0&\hspace{-7pt}0&\hspace{-7pt}0
            \\[-2pt] 
            0&\hspace{-7pt}1&\hspace{-7pt}0&\hspace{-7pt}0
            \\[-2pt]
            0&\hspace{-7pt} 0&\hspace{-7pt}1&\hspace{-7pt}0
            \\[-2pt] 
            0&\hspace{-7pt}0&\hspace{-7pt}0&\hspace{-7pt}1
    \end{bmatrix}$} &\hspace{-10pt} {\small $\Id_{\Calr}$} & \hspace{-15pt}{\small $\Id_{\Cals}$} \\\hline

 \hspace{-10pt} \mbox{\small\textit{(ii)}} & 2&  \hspace{-10pt}{\footnotesize $
        \begin{bmatrix}
           1&\hspace{-7pt}0&\hspace{-7pt}0&\hspace{-8pt}0
            \\[-2pt] 
            0&\hspace{-7pt}1&\hspace{-7pt}0&\hspace{-8pt}0
            \\[-2pt] 
            0&\hspace{-7pt} 0&\hspace{-7pt}1&\hspace{-8pt}0
            \\[-2pt] 
            0&\hspace{-7pt}0&\hspace{-7pt}0&\hspace{-8pt}-1
    \end{bmatrix}$} &\hspace{-10pt} {\footnotesize $(p_1\ q_1)(p_2\ q_2)(p_3\ q_3)(p_4\ q_4) $}&  \hspace{-15pt} {\footnotesize $  \begin{matrix}
        (S_{12}\ T_{12})(S_{13}\ T_{13}) (S_{14}\ T_{14})\\(S_{23}\ T_{23})(S_{24}\ T_{24}) (S_{34}\ T_{34})
    \end{matrix}$}\\\hline

 \hspace{-10pt} \mbox{\small\textit{(iii)}} & 2&  \hspace{-10pt}{\footnotesize $
        \begin{bmatrix}
        1& \hspace{-7pt}0&\hspace{-7pt} 0&\hspace{-7pt} 0\\[-2pt]
        0&\hspace{-7pt} 1&\hspace{-7pt} 0&\hspace{-7pt} 0\\[-2pt]
        0&\hspace{-7pt} 0&\hspace{-7pt} 0&\hspace{-7pt}1\\[-2pt]0&\hspace{-7pt} 0&\hspace{-7pt} 1&\hspace{-7pt} 0
    \end{bmatrix}$} &\hspace{-10pt} {\footnotesize $(e_3\ e_4)(p_3\ p_4)(q_1\ q_2) $}& \hspace{-15pt}{\footnotesize $  \begin{matrix}
        (S_{13}\ S_{23})(S_{14}\ S_{24})\\ (T_{13}\ T_{14})(T_{23}\ T_{24})
    \end{matrix}$} \\\hline

 \hspace{-10pt} \mbox{\small\textit{(iv)}} & 2&  \hspace{-10pt}{\footnotesize $
        \begin{bmatrix}
             1&\hspace{-7pt}0&\hspace{-8pt}0&\hspace{-10pt}0
            \\[-2pt] 
            0&\hspace{-7pt}1&\hspace{-8pt}0&\hspace{-10pt}0
            \\[-2pt] 
            0&\hspace{-7pt} 0&\hspace{-8pt}-1&\hspace{-10pt}0
            \\[-2pt] 
            0&\hspace{-7pt}0&\hspace{-8pt}0&\hspace{-10pt}-1
    \end{bmatrix}$} &\hspace{-10pt}{\footnotesize $(p_1\ p_2)(p_3\ p_4)(q_1\ q_2)(q_3\ q_4) $}& \hspace{-15pt}{\footnotesize $  \begin{matrix}
        (S_{13}\ S_{24})(S_{14}\ S_{23})\\ (T_{13}\ T_{24})(T_{14}\ T_{23})
    \end{matrix}$}  \\\hline

 \hspace{-10pt} \mbox{\small\textit{(v)}} & 2&  \hspace{-10pt}{\footnotesize $
        \begin{bmatrix}1&\hspace{-7pt} 0&\hspace{-7pt} 0&\hspace{-8pt} 0\\[-2pt]
        0&\hspace{-7pt} 0&\hspace{-7pt} 1&\hspace{-8pt} 0\\[-2pt]
        0&\hspace{-7pt} 1&\hspace{-7pt} 0&\hspace{-8pt} 0\\[-2pt]
        0&\hspace{-7pt} 0&\hspace{-7pt} 0&\hspace{-8pt} -1
    \end{bmatrix}$} &\hspace{-10pt} {\footnotesize $(e_2\ e_3)(p_1\ q_1)(p_2\ q_3)(p_3\ q_2)(p_4\ q_4) $}& \hspace{-15pt}{\footnotesize $  \begin{matrix}
        (S_{12}\ T_{13})(S_{13}\ T_{12})(S_{14}\ T_{14})\\ (S_{23}\ T_{23})(S_{24}\ T_{34})(S_{34}\ T_{24})
    \end{matrix}$}  \\\hline

 \hspace{-10pt} \mbox{\small\textit{(vi)}} & 2&  \hspace{-10pt}{\footnotesize $
        \begin{bmatrix}
0&\hspace{-7pt}1&\hspace{-7pt}0&\hspace{-7pt}0\\[-1pt]
1&\hspace{-7pt}0&\hspace{-7pt}0&\hspace{-7pt}0\\[-1pt]
0&\hspace{-7pt}0&\hspace{-7pt}0&\hspace{-7pt}1\\[-1pt]
0&\hspace{-7pt}0&\hspace{-7pt}1&\hspace{-7pt}0
    \end{bmatrix}$} &\hspace{-10pt} {\footnotesize $(e_1\ e_2)(e_3\ e_4)(p_1\ p_2)(p_3\ p_4) $}& \hspace{-15pt}{\footnotesize $ (T_{13}\ T_{24})(T_{14}\ T_{23})$}  \\\hline
    
 \hspace{-10pt} \mbox{\small\textit{(vii)}} & 2&  \hspace{-10pt}{\footnotesize $
        \begin{bmatrix}
        0&\hspace{-8pt}1&\hspace{-8pt}0&\hspace{-8pt}0\\[-1pt]
-1&\hspace{-8pt}0&\hspace{-8pt}0&\hspace{-8pt}0\\[-1pt]
0&\hspace{-8pt}0&\hspace{-8pt}0&\hspace{-8pt}1\\[-1pt]
0&\hspace{-8pt}0&\hspace{-8pt}-1&\hspace{-8pt}0
    \end{bmatrix}$} &\hspace{-10pt} {\footnotesize $ (e_1\ e_2)(e_3\ e_4)(p_1\ p_4)(p_2\ p_3)(q_1\ q_3)(q_2\ q_4)$}& \hspace{-15pt}{\footnotesize $  \begin{matrix}
        (S_{12}\ S_{34})(S_{14}\ S_{23})\\(T_{12}\ T_{34})(T_{13}\ T_{24}) 
    \end{matrix}$} \\\hline
      
   \hspace{-10pt}    \mbox{\small\textit{(viii)}} &3 & \hspace{-10pt} {\footnotesize $\begin{bmatrix}
           1&\hspace{-7pt}0&\hspace{-7pt}0&\hspace{-7pt}0
            \\[-2pt] 
            0&\hspace{-7pt}0&\hspace{-7pt}1&\hspace{-7pt}0
            \\[-2pt] 
            0&\hspace{-7pt} 0&\hspace{-7pt}0&\hspace{-7pt}1
            \\[-2pt] 
            0&\hspace{-7pt}1&\hspace{-7pt}0&\hspace{-7pt}0
    \end{bmatrix}$}&\hspace{-10pt} {\footnotesize $(e_2\ e_4\ e_3)(p_2\ p_4\ p_3)(q_1\ q_2\ q_3)$} & \hspace{-15pt} {\footnotesize $  \begin{matrix}
        (S_{12}\ S_{23}\ S_{13}) (S_{14}\ S_{24}\ S_{34})\\(T_{12}\ T_{14}\ T_{13}) (T_{23}\ T_{24}\ T_{34})
    \end{matrix}$}\\ \hline
      
   \hspace{-10pt}    \mbox{\small\textit{(ix)}} &4 & \hspace{-10pt} {\footnotesize $\begin{bmatrix}
          0&\hspace{-7pt}0&\hspace{-8pt}0&\hspace{-8pt}1\\[-1pt]
0&\hspace{-7pt}0&\hspace{-8pt}-1&\hspace{-8pt}0\\[-1pt]
1&\hspace{-7pt}0&\hspace{-8pt}0&\hspace{-8pt}0\\[-1pt]
0&\hspace{-7pt}1&\hspace{-8pt}0&\hspace{-8pt}0
    \end{bmatrix}$}&\hspace{-10pt} {\footnotesize $(e_1\ e_3\ e_2\ e_4)(p_1\ q_1\ p_4\ q_3)(p_2\ q_2\ p_3\ q_4) $} & \hspace{-15pt}{\footnotesize $  \begin{matrix}
        (S_{12}\ T_{34}\ S_{34}\ T_{12})(S_{13}\ T_{14})\\(S_{14}\ T_{24}\ S_{23}\ T_{13})(S_{24}\ T_{23})
    \end{matrix}$}\\ \hline

   \hspace{-10pt}    \mbox{\small\textit{(x)}} &4 & \hspace{-10pt} {\footnotesize $\begin{bmatrix}
   1&\hspace{-7pt}0&\hspace{-8pt}0&\hspace{-8pt}0\\[-1pt]
0&\hspace{-7pt}1&\hspace{-8pt}0&\hspace{-8pt}0\\[-1pt]
0&\hspace{-7pt}0&\hspace{-8pt}0&\hspace{-8pt}1\\[-1pt]
0&\hspace{-7pt}0&\hspace{-8pt}-1&\hspace{-8pt}0
    \end{bmatrix}$}&\hspace{-10pt} {\footnotesize $ (e_3\ e_4)(p_1\ q_1\ p_2\ q_2)(p_3\ q_4\ p_4\ q_3)$} & \hspace{-15pt}{\footnotesize $  \begin{matrix}
       (S_{12}\ T_{12})(S_{13}\ T_{23}\ S_{24}\ T_{14})\\(S_{14}\ T_{24}\ S_{23}\ T_{13})(S_{34}\ T_{34})
    \end{matrix}$}\\ \hline

 \hspace{-10pt} \mbox{\small\textit{(xi)}} & 4&  \hspace{-10pt}{\footnotesize $
        \begin{bmatrix}
        -1&\hspace{-10pt}0&\hspace{-8pt}0&\hspace{-7pt}0\\[-1pt]
0&\hspace{-10pt}0&\hspace{-8pt}1&\hspace{-7pt}0\\[-1pt]
0&\hspace{-10pt}-1&\hspace{-8pt}0&\hspace{-7pt}0\\[-1pt]
0&\hspace{-10pt}0&\hspace{-8pt}0&\hspace{-7pt}1
    \end{bmatrix}$} &\hspace{-10pt} {\footnotesize $(e_2\ e_3)(p_1\ p_3\ p_4\ p_2)(q_1\ q_3\ q_4\ q_2) $}& \hspace{-15pt}{\footnotesize $  \begin{matrix}
       (S_{12}\ S_{13}\ S_{34}\ S_{24})(S_{14}\ S_{23})\\(T_{12}\ T_{13}\ T_{34}\ T_{24})(T_{14}\ T_{23})
    \end{matrix}$}\\\hline

 \hspace{-10pt} \mbox{\small\textit{(xii)}} & 4&  \hspace{-10pt}{\footnotesize $
        \begin{bmatrix}
             0&\hspace{-7pt}1&\hspace{-7pt}0&\hspace{-8pt}0\\[-1pt]
1&\hspace{-7pt}0&\hspace{-7pt}0&\hspace{-8pt}0\\[-1pt]
0&\hspace{-7pt}0&\hspace{-7pt}0&\hspace{-8pt}-1\\[-1pt]
0&\hspace{-7pt}0&\hspace{-7pt}1&\hspace{-8pt}0
    \end{bmatrix}$} &\hspace{-10pt} {\footnotesize $ (e_1\ e_2)(e_3\ e_4)(p_1\ q_1\ p_2\ q_2)(p_3\ q_3\ p_4\ q_4)$}& \hspace{-15pt}{\footnotesize $  \begin{matrix}
       (S_{12}\ T_{12})(S_{13}\ T_{24}\ S_{24}\ T_{13})\\(S_{14}\ T_{23}\ S_{23}\ T_{14})(S_{34}\ T_{34})
    \end{matrix}$} \\\hline

 \hspace{-10pt} \mbox{\small\textit{(xiii)}} & 4&  \hspace{-10pt}{\footnotesize $
        \begin{bmatrix}0&\hspace{-7pt}1&\hspace{-7pt}0&\hspace{-7pt}0\\[-1pt]
0&\hspace{-7pt}0&\hspace{-7pt}1&\hspace{-7pt}0\\[-1pt]
0&\hspace{-7pt}0&\hspace{-7pt}0&\hspace{-7pt}1\\[-1pt]
1&\hspace{-7pt}0&\hspace{-7pt}0&\hspace{-7pt}0
    \end{bmatrix}$} &\hspace{-10pt} {\footnotesize $(e_1\ e_4\ e_3\ e_2)(p_1\ p_4\ p_3\ p_2)(q_1\ q_3)$}& \hspace{-15pt}{\footnotesize $ \begin{matrix}
       (S_{12}\ S_{23})(S_{14}\ S_{34})\\(T_{12}\ T_{14}\ T_{34}\ T_{23})(T_{13}\ T_{24})
    \end{matrix}$} \\\hline
      
 \hspace{-10pt}   \mbox{\small\textit{(xiv)}} &6&   \hspace{-10pt} {\footnotesize $
   \begin{bmatrix}
           -1&\hspace{-8pt}0&\hspace{-7pt}0&\hspace{-7pt}0
            \\[-2pt]  
            0&\hspace{-8pt}0&\hspace{-7pt}0&\hspace{-7pt}1
            \\[-2pt]  
            0&\hspace{-8pt} 1&\hspace{-7pt}0&\hspace{-7pt}0
            \\[-2pt]  
            0&\hspace{-8pt}0&\hspace{-7pt}1&\hspace{-7pt}0
    \end{bmatrix} $}&\hspace{-10pt} {\footnotesize $ (e_2\ e_3\ e_4)(p_1\ q_4)(p_2\ q_2\ p_4\ q_3\ p_3\ q_1)  $}& \hspace{-5pt} {\footnotesize $\begin{matrix}
        (S_{12}\ T_{24}\ S_{23}\ T_{34}\ S_{13}\ T_{23})\\(S_{14}\ T_{12}\ S_{24}\ T_{14}\ S_{34}\ T_{13})
    \end{matrix}$}\\
      \hline 
    \end{tabular}
    \caption{Representatives for the 14 conjugacy classes in $\CB$.}
    \label{tab:excaseB}
\end{table}

\begin{table}[ht]
    \begin{tabular}{c|c}
        \toprule
        &Invariants
        \\
        \midrule[\heavyrulewidth]
         \mbox{\textit{(ii)}} &
        {\small $\displaystyle\frac{ (x_2^2-x_4^2)(x_1+x_3)^2}{(x_3^2-x_4^2) (x_1+x_2)^2 }$,
         $\displaystyle \frac{(x_2-x_3)^2 (x_1^2-x_4^2)}{(x_3^2-x_4^2)(x_1+x_2)^2} $,
         $\displaystyle \frac{(x_2+x_3)^2(x_1^2-x_4^2) }{(x_3^2-x_4^2)(x_1+x_2)^2} $}
         \\[2ex]
        \midrule
         
         \mbox{\textit{(iii)}} &
        { $ \begin{array}{c}
           \mbox{\small$\displaystyle \frac{(x_2-x_4)(x_1+x_3)+(x_2-x_3)(x_1+x_4) }{(x_3-x_4)(x_1-x_2) },\    \frac{(x_2-x_3)(x_1-x_4)+(x_2-x_4)(x_1-x_3) }{(x_3+x_4)(x_1+x_2) }$},  \\[2ex]\mbox{\small$\displaystyle \frac{ (x_2-x_4)(x_1+x_3)-(x_2-x_3)(x_1+x_4)}{(x_3+x_4)(x_1+x_2) }$}
        \end{array}$} 
         \\[2ex]
        \midrule

         \mbox{\textit{(vi)}} &
        {\small $\displaystyle\frac{(x_2-x_3) (x_1-x_4) }{ (x_3+x_4) (x_1+x_2)}$,
         $\displaystyle \frac{(x_2+x_3) (x_1+x_4)}{(x_3+x_4) (x_1+x_2)} $,
         $\displaystyle \frac{ (x_2-x_4)(x_1+x_3)-(x_2+x_4) (x_1-x_3) }{ (x_3+x_4) (x_1+x_2)} $}
         \\[2ex]
        \midrule

        \mbox{\textit{(ix)}} &{ $ \begin{array}{c}
           \mbox{{\small $\displaystyle \frac{2x_1x_2+x_1x_3-x_1x_4-x_2x_3-x_2x_4}{  2 x_1x_2-x_1x_3+x_1x_4+x_2x_3+x_2x_4}$,
         $\displaystyle\frac{(x_2+x_4) (x_1+x_3) (x_2-x_3) (x_1+x_4)}{(x_3^2+x_4^2) (x_1^2+x_2^2)}$,}}  \\[2ex]\mbox{\small $\displaystyle \frac{(x_2-x_4) (x_1-x_3) (x_2+x_3) (x_1-x_4)}{(x_3^2+x_4^2) (x_1^2+x_2^2)}$}
        \end{array}$}
         
         \\[2ex]
        \midrule

        \mbox{\textit{(x)}} &{ $ \begin{array}{c}
           \mbox{{\small $\displaystyle \frac{2 x_1 x_2+x_1 x_3-x_1 x_4-x_2 x_3-x_2 x_4}{2 x_1 x_2-x_1 x_3+x_1 x_4+x_2 x_3+x_2 x_4}$,
         $\displaystyle\frac{(x_2+x_4) (x_1+x_3) (x_2-x_3) (x_1+x_4)}{(x_3^2+x_4^2) (x_1^2+x_2^2)}$,}}  \\[2ex]\mbox{\small $\displaystyle \frac{(x_2-x_4) (x_1-x_3) (x_2+x_3) (x_1-x_4)}{(x_3^2+x_4^2) (x_1^2+x_2^2)}$}
        \end{array}$}
         
         \\[2ex]
        \midrule

        \mbox{\textit{(xi)}} &{ $ \begin{array}{c}
           \mbox{{\small$\displaystyle \frac{(x_1 x_4-x_2 x_3)^2}{(x_1 x_4+x_2 x_3)^2  },\displaystyle\frac{x_1^2 x_2^2+x_1^2 x_3^2+4 x_1 x_2 x_3 x_4+x_2^2 x_4^2+x_3^2 x_4^2}{(x_1 x_4+x_2 x_3)^2}$}},  \\[2ex]\mbox{\small$\displaystyle \frac{x_1^2 x_2^2-2 x_1^2 x_2 x_3-x_1^2 x_3^2+x_2^2 x_4^2+2 x_2 x_3 x_4^2-x_3^2 x_4^2}{(x_1 x_4+x_2 x_3)^2}$}
        \end{array}$}
         
         \\[2ex]
        \midrule
        \mbox{\textit{(xii)}} &
        {\small $\displaystyle \frac{(x_1-x_2)}{(x_1+x_2)}$,
         $\displaystyle\frac{2 (x_2^2+x_4^2) (x_1^2+x_3^2)}{(x_1+x_2)^2 (x_3^2+x_4^2)}$,
         $\displaystyle \frac{2 (x_2^2+x_3^2) (x_1^2+x_4^2)}{(x_1+x_2)^2 (x_3^2+x_4^2)}$}
         \\[2ex]
        \midrule

        \mbox{\textit{(xiii)}} &{ $ \begin{array}{c}
           \mbox{\small$\displaystyle \frac{(x_2+x_4) (x_1+x_3)}{  (x_1 x_2+2 x_1 x_3+x_1 x_4+x_2 x_3+2 x_2 x_4+x_3 x_4)}$},\\[2ex]   \mbox{\small$\displaystyle\frac{(x_1 x_3-x_2 x_4)}{(x_1 x_2+2 x_1 x_3+x_1 x_4+x_2 x_3+2 x_2 x_4+x_3 x_4)}$},  \\[2ex]\mbox{\small$\displaystyle \frac{(x_1^2 x_2^2+x_1^2 x_4^2-4 x_1 x_2 x_3 x_4+x_2^2 x_3^2+x_3^2 x_4^2)}{(x_1^2 x_2^2+2 x_1^2 x_3^2+x_1^2 x_4^2-8 x_1 x_2 x_3 x_4+x_2^2 x_3^2+2 x_2^2 x_4^2+x_3^2 x_4^2)}$}
        \end{array}$}
         
         \\[2ex]
        \midrule
        
        \mbox{\textit{(xiv)}} &{ $ \begin{array}{c}
           \mbox{{\small $\displaystyle \frac{\ssum{ \Set{i,j,k}=\Set{2,3,4}}{ } (x_1^2+ x_i^2)(x_j-x_k)^2}{ \ssum{ \Set{i,j,k}=\Set{2,3,4}}{ } (x_1^2+ x_i^2)(x_j+x_k)^2 }$,
        $\displaystyle\frac{ \ssum{ \Set{i,j,k}=\Set{2,3,4}}{ }x_1x_i(x_j-x_k)^2 }{ \ssum{ \Set{i,j,k}=\Set{2,3,4}}{ }x_1x_i(x_j+x_k)^2 }$,}}  \\[2ex]\mbox{\small $\displaystyle \frac{ x_1 x_2+x_1 x_3+x_1 x_4+x_2 x_3+x_2 x_4+x_3 x_4)) (x_1 x_2+x_1 x_3+x_1 x_4-x_2 x_3-x_2 x_4-x_3 x_4}{ \ssum{ \Set{i,j,k}=\Set{2,3,4}}{ }x_1x_i(x_j+x_k)^2 }$}
        \end{array}$}
                 
         \\[3ex]
         \bottomrule
    \end{tabular}
    \caption{A resuming table of the invariants of the maps 
    $\Phi=g \circ \crem_{3}$ for $g$ chosen from \Cref{tab:excaseB} of order 2 and 3.}
    \label{tab:ivariantcaseB}
\end{table}

\begin{table}[ht]
    \begin{tabular}{c|cc}
    \toprule
    Case & Conjugacy class of the square & Invartiants
    \\
    \hline
    \textit{(ix)} & \textit{(vii)}  & $\Set{\tilde{R}_1, R_2^2, R_3^2}$
    \\
    \textit{(x)} & \textit{(iv)} & $\Set{\tilde{R}_1, R_2^2, R_3^2}$
    \\
    \textit{(xi)} &  \textit{(iv)} & $\Set{R_1, R_2, R_3^2}$
    \\
    \textit{(xii)} & \textit{(iv)} & $\Set{R_1^2, R_2, R_3}$
    \\
    \textit{(xiii)} &  \textit{(v)} & $\Set{R_1, \tilde{R}_2, R_3^2}$
    \\
    \textit{(xiv)} & \textit{(viii)}  & $\Set{R_1, R_2, R_3^2}$
    \\
    \bottomrule
    \end{tabular}
    \caption{Relations between conjugacy classes in $\CB$ and invariants. The invariants
    are build following \Cref{cor:conjBk}. Here, $R_i$ denotes the $i$-th invariant
    in \Cref{tab:ivariantcaseB} of the case in first column.}
    \label{tab:relationB}
\end{table}

\subsection{Case \textit{(C)}} \label{sss:casseCinva}

In this case we have no $\overline{H}$-invariant linear system of quadrics
(see \Cref{prop:noquadricsB}) and there is only a pencil $\Xi_\Calr$
of $2\overline{H}$-invariant quartics (see \Cref{prop:quarticsC}).

Consider the following three points $ S_{12,34},S_{13,24},S_{14,23}\in\Xi_\Calr$:
\begin{subequations}
    \begin{align}
       S_{12,34} &= H_{12}^++H_{12}^- + H_{34}^++H_{34}^-,
       \\
       S_{13,24} &= H_{13}^++H_{13}^-+ H_{24}^++H_{24}^-,
       \\
       S_{14,23} &= H_{14}^++H_{14}^-+H_{23}^++H_{23}^-,
    \end{align}
\end{subequations}
corresponding to the desmic surfaces (see \Cref{desmicclassic}).  Notice that
any $g$ of type \textit{(C)} acts on $\Set{S_{12,34},S_{13,24},S_{14,23}}$
and this action uniquely determines the action of $g$ on $\Xi_\Calr\cong\Pj^1$.

\begin{remark}\label{rem:invcasec}
    Let $g\in \Calc$ be an element of type \textit{(C)}. Then, the
    existence of an invariant of degree $d$, for $d\ge 4$ depends on
    the action of $g$ on $\Set{S_{12,34},S_{13,24},S_{14,23}}$.
    Precisely, it depend on the number of fixed points of
    $g|_{\Set{S_{12,34},S_{13,24},S_{14,23}}}$.
\end{remark}

In this case we will not present tables similar to
\Cref{tab:excaseA,tab:ivariantcaseA}. Instead, we
will give an example for each possible  fixed locus of
$g|_{\Set{S_{12,34},S_{13,24},S_{14,23}}}$ (or equivalently the fixed
locus of $g\colon \Xi_\Calr\to\Xi_\Calr$). These examples are listed
in \Cref{tab:ivariantcaseC}.

    \begin{table}[ht]
    \begin{tabular}{ccc}
        \toprule
        Fix pts &$g$ & Invariant
        \\\hline
        $\Xi_\Calr$ &{\footnotesize $
        \begin{bmatrix}
            1&\hspace{-7pt}-1&\hspace{-7pt}-1&\hspace{-7pt}1
            \\ 
            -1&\hspace{-7pt}1&\hspace{-7pt}-1&\hspace{-7pt}1
            \\ 
            -1&\hspace{-7pt}-1&\hspace{-7pt}1&\hspace{-7pt}1
            \\ 
            -1&\hspace{-7pt}-1&\hspace{-7pt}-1&\hspace{-7pt}-1
    \end{bmatrix}$} &{\small $\displaystyle\frac{(x_1^2-x_2^2)(x_3^2-x_4^2)}{(x_1^2-x_3^2)(x_2^2-x_4^2)} $}
    \\\hline
    $\Set{S_{12,34}}$ & {\footnotesize $
        \begin{bmatrix}
            1&\hspace{-7pt}-1&\hspace{-7pt}1&\hspace{-7pt}-1
            \\ 
            -1&\hspace{-7pt}1&\hspace{-7pt}1&\hspace{-7pt}-1
            \\ 
            -1&\hspace{-7pt}-1&\hspace{-7pt}1&\hspace{-7pt}1
            \\ 
            -1&\hspace{-7pt}-1&\hspace{-7pt}-1&\hspace{-7pt}-1
    \end{bmatrix}$}& {\small $\displaystyle\frac{(x_1^2-x_3^2)(x_2^2-x_4^2)(x_1^2-x_4^2)(x_2^2-x_3^2)}{(x_1^2-x_2^2)^2 (x_3^2-x_4^2)^2} $}
    \\\hline
    $\Set{S_{13,24}}$ & {\footnotesize $
        \begin{bmatrix}
            1&\hspace{-7pt}1&\hspace{-7pt}-1&\hspace{-7pt}-1
            \\ 
            -1&\hspace{-7pt}1&\hspace{-7pt}-1&\hspace{-7pt}1
            \\ 
            -1&\hspace{-7pt}1&\hspace{-7pt}1&\hspace{-7pt}-1
            \\ 
            -1&\hspace{-7pt}-1&\hspace{-7pt}-1&\hspace{-7pt}-1
    \end{bmatrix}$}& {\small $\displaystyle\frac{(x_1^2-x_2^2)(x_3^2-x_4^2)(x_1^2-x_4^2)(x_2^2-x_3^2)}{(x_1^2-x_3^2)^2 (x_2^2-x_4^2)^2} $}
    \\\hline
    $\Set{S_{14,23}}$ & {\footnotesize $
        \begin{bmatrix}
            1&\hspace{-7pt}1&\hspace{-7pt}-1&\hspace{-7pt}-1
            \\ 
            -1&\hspace{-7pt}1&\hspace{-7pt}1&\hspace{-7pt}-1
            \\ 
            -1&\hspace{-7pt}-1&\hspace{-7pt}-1&\hspace{-7pt}-1
            \\ 
            -1&\hspace{-7pt}1&\hspace{-7pt}-1&\hspace{-7pt}1
    \end{bmatrix}$}& {\small $\displaystyle\frac{(x_1^2-x_2^2)(x_3^2-x_4^2)(x_1^2-x_3^2)(x_2^2-x_4^2)}{(x_1^2-x_4^2)^2 (x_2^2-x_3^2)^2} $}
    \\\hline
    $\emptyset$ &{\footnotesize $
        \begin{bmatrix}
            1&\hspace{-7pt}1&\hspace{-7pt}-1&\hspace{-7pt}-1
            \\ 
            -1&\hspace{-7pt}1&\hspace{-7pt}1&\hspace{-7pt}-1
            \\ 
            -1&\hspace{-7pt}1&\hspace{-7pt}-1&\hspace{-7pt}1
            \\ 
            -1&\hspace{-7pt}-1&\hspace{-7pt}-1&\hspace{-7pt}-1
    \end{bmatrix}$}&{\small $\begin{matrix}
        \displaystyle\frac{(x_1^2-x_2^2) (x_3^2-x_4^2) (x_1^2-x_3^2)(x_2^2-x_4^2) (x_1^2-x_4^2) (x_2^2-x_3^2) }{ \ssum{i=0}{2} ((x_{\sigma(1)}-x_{\sigma(2)})(x_{\sigma(3)}-x_{\sigma(4)}))^2(x_{\sigma(1)}-x_{\sigma(3)})(x_{\sigma(2)}-x_{\sigma(4)}) }\\
        \mbox{ for } \sigma =(2 \ 3 \ 4 ).
    \end{matrix}$}
    \\\hline
    \end{tabular}
    \caption{Example of the invariant of the maps of the form
    $\Phi=g \circ \crem_{3}$ for $g$ of type \textit{(C)}.}
    \label{tab:ivariantcaseC}
    \end{table}

\begin{remark}
    \label{rem:nonexistence}
    We remark that no functionally independent invariants other than 
    the ones presented in \Cref{tab:ivariantcaseC} do exist for these maps.
    Indeed, if such an invariant would exist, one could use it to define
    a pre-symplectic structure compatible with the map (see \cite{ByrnesHaggarQuispel1999}). However, from
    the results in \cite{Bellon1999}, this would force the degree growth
    to be at most polynomial contradicting \Cref{prop:caseBgrowth}.
\end{remark}

\section{Conclusions}
\label{sec:concl}

In this paper, motivated by the study of the KHK discretisation of the
Euler top, we introduced a finite subgroup of $\Pj\Gl(4,\C)$, we called
it the Cremona-cubes group and we denoted it by $\Calc$. This group is
crafted in a way that it encompasses and generalises all the geometrical
properties of the KHK discretisation of the Euler top. Indeed, the KHK
discretisation of the Euler top is projectively equivalent to the map
$\Phi^{(0)} = g_0\circ \crem_3$, with $g_0$ as in \eqref{eq:eulerkhk2},
swapping $\Cale$ and $\Calp$. So, in order to generalise this behaviour,
we considered all the projectivities acting \emph{with finite order}
on all the special points of the standard Cremona transformation. This
yields the Cremona-cubes group (see \Cref{def:cubegroup}).

We studied the algebraic properties of the group $\Calc$ and we
splitted it in three disjoint subsets, namely \emph{(A)}, \emph{(B)},
and \emph{(C)}, depending on how the projectivity $g\in\Calc$ acts
on $\Set{\Cale,\Calp,\Calq}$ (see \Cref{orbits}). The birational
maps of the form $\Phi=g\circ \crem_3$, for $g\in\Calc$ have diffent
growth, covariance, and invariance properties depending on the type
of $g$. Type \emph{(A)} gives integrable maps, both in the algebraic
entropy (see \Cref{prop:caseAgrowth}) and in the Liouville--Poisson
sense (see \Cref{tab:ivariantcaseA}). Type \emph{(B)} consists of
permutations with signs of the homogeneous coordinates yielding
periodic maps (see \Cref{prop:caseC}).  Type \emph{(C)} gives
non-integrable maps (see \Cref{prop:caseBgrowth}), which however do
preserve an invariant of order at least four and at most twelve (see
\Cref{tab:ivariantcaseC}). The growth and convariance properties are
discussed in a unified way for all the elements of a given type (see
\Cref{prop:quadricsA,covcasec,covac2,prop:quarticsC} for the covariance).

On the other hand, the construction of the invariants is specific
to some subclasses of maps. For the maps defined from elemets
of type \emph{(A)} we characterise completely the invariants up
to their action on $\Calq$ (resp. $\Calp$) if $g$ swaps $\Cale$
and $\Calp$ (resp. $\Cale$ and $\Calq$): that is, if the actions of
$g,g'\in\Calc$ of type \emph{(A)} agree on $\Calp$ (resp. $\Calq$), then
$g\circ\crem_{3}$ and $g'\circ\crem_{3}$ possess the same invariants
(see \Cref{tab:excaseA,tab:ivariantcaseA}).  It is remarkable that
even though the invariants of $g\circ \crem_3$ depend only on the
action of $g$ on $\Calq$ (resp. $\Calp$) one may not be able to recover
this action starting only from the invariants (see case \emph{(ii)} in
\Cref{tab:excaseA,tab:ivariantcaseA}). This approach is different from
the construction made in \cite{Alonso_et_al2022}. There, to a pair of
quadrics, it is associated a \emph{single} birational map for which the
quadrics are invariant through the construction of some involution. The
study of invariants of the maps of the form $g\circ\crem_{3}$
with $g\in\Calc$ of type \emph{(B)} is based on the fact that these
projectivities constitute a group, namely $\CB\subset\Calc$.  We have
shown that, knowing invariants of $g\circ\crem_3$, for a given $g\in\CB$,
it is possible to recover the invariants of the maps $h\circ\crem_3$
for all $h\in\CB$ conjugated, in $\CB$ to $g$, or of the maps of the
form $g^k\circ\crem_3$, for $k\ge0$ (see \Cref{prop:conjB,cor:conjBk}).
We gave the invariants for a chosen representative in each of the ten
independent conjugacy classes (see \Cref{tab:ivariantcaseB}).  Lastly,
when $g\in\Calc$ is of type \emph{(C)}, we characterised completely the
invariants of $g\circ \crem_3$ according to its action on the three desmic
quartics belonging to the pencil introduced in \Cref{prop:quarticsC}.
Since $g$ acts on a set of three elements only three behaviours are
possible: fixing the three surfaces and hence the whole pencil, fixing
only one surface, or fixing no surface at all. We also presented the
invariats in this case (see \Cref{tab:ivariantcaseC}).

We remark that our construction of the invariants consists
of a geometric argument which makes our work more similar to
\cite{QRT1988,Alonso_et_al2022,GJTV_class} than to other papers
where invariants where successfully constructed for given maps with
a given algorithm (see \cite{JoshiViallet2017,Gubbiotti_Levi70,
CelledoniEvripidouMcLareOwewnQuispelTapleyvanderKamp2019,
PetreraPfadlerSuris2009,PetreraPfadlerSuris2011,PetreraSuris2010,GJTV_sanya}).
The main difference in this work is that we are able to characterise
at once maps with all the three possible behaviours, and, even
more interestingly, the number of integrable maps derived from this
construction is the same as the number of periodic and non-integrable
ones. This is strikingly surprising as integrable maps are deemed to be
very rare.

We note that, in the construction of the Cremona-cubes group $\Calc$ it is crucial the existence of a positive integer $k\in\N$ such that
\begin{equation}
    \label{eq:cond3}
    k\cdot  (\dim \Pj^3+1) = \abs{\Fix \crem_3}.
\end{equation}
Indeed, $\dim \Pj^3+1=4$ and $\abs{\Fix\crem_3}=8$ implying $k=2$. This
translates in the fact that the vertices of the three-dimensional cube splits
into the sets of vertices of two distinct three-dimensional simplices. In
general, the number $k$ corresponds to the number of distinct
$M$-dimensional simplices in which we want to split the set of vertices
of a $M$-dimensional cube. So, a similar construction might be possible
only if there exists a positive integer $k\in\N$ such that
\begin{equation}
    \label{eq:condn}
    k\cdot (\dim \Pj^M+1) = \abs{\Fix \crem_M}.
\end{equation}
Explicitly, since $\abs{\Fix \crem_M}=2^M$, we want the following equality to be satisfied
\begin{equation}
    \label{eq:condnexpl}
    k\cdot  (M+1) = 2^M.
\end{equation}
Now, from \Cref{eq:condnexpl}, it directly follows that  $k=2^h$ and $(M+1)=2^m$ where $h,m\in\N$ are positive integers such that $h+m=M$. This implies:
\begin{equation}
    \label{eq:ndef}
    M = 2^m-1, \quad k = 2^{2^m-1-m},
\end{equation}
so that everything is determined by the free parameter $m\in\N$. From \Cref{tab:2mk}, we see that
the next possible case is in $\Pj^7$. This already brings the number
of subsets of $\Fix\crem_3$ formed by orthogonal sets of
points of $\Pj^7$ to 16.  We expect  their dynamics to be
quite involved. Yet, we believe that it might be possible to construct an
analogue of the Cremona-cubes group with similar nice properties.  Work is
in progress in that direction.  Incidentally, we note that the number $k$
increases dramatically as $m$ raises. E.g., again from \Cref{tab:2mk}, we see that,
for $m=10$, $k$ is a number with 305 digits.

\begin{table}[hbt]
    \centering
    \begin{tabular}{c|cccccccccccc}
        \toprule
         $m$ &  1 & 2 & 3 & 4& 5& 6& 7& 8& 9& 10 %
         \\
         \midrule
         $M$ &  1 & 3 & 7 & 15 & 31 & 63 & 127 & 255 & 511 & 1023 %
         \\
         $k$ & 1 & 2 & 16 & 2048 & $\sim 10^{7}$ &  $\sim 10^{17}$ & $\sim 10^{37}$ & $\sim 10^{75}$ & $\sim 10^{151}$ & $\sim 10^{304}$ %
         \\
         \bottomrule
    \end{tabular}
    \caption{The first 10 values of $M$ and $k$ for which a construction similar to
    the one we carried out in this paper can be possible.}
    \label{tab:2mk}
\end{table}

We observe now that other generalisations without the need
of raising the dimension are possible.
Indeed, one can consider different singularity patterns contracting planes
to points. This, happens easily by considering degenerations of the Nambu
systems discussed in \Cref{ex:celledoni}: tuning the free parameters in
a way that some of the $\lambda_i$'s and/or $\kappa_i$'s collapse
alters the singularity structure. We are working to characterise these
degenerations and their geometric properties.
Moreover, in $\Pj^3$, it is possible to contract surfaces
both on points and on curves. The standard Cremona transformation \eqref{eq:C3} 
only contracts planes to points (see \Cref{sec:3dimcremona}). On the other hand the, map
\begin{equation}
    \label{eq:pseudocrem}
    \begin{tikzcd}[row sep=tiny]
        \theta \colon\Pj^3 \arrow[dashed]{r} & \Pj^3 \\
        {[}x_1:x_2:x_3:x_4{]} \arrow[mapsto]{r} & \left[x_{2}x_{3}: x_1x_3: x_1 x_2: x_3 x_4\right].
    \end{tikzcd}
\end{equation}
contracts the coordinate planes $\Set{x_i=0}$, for $i=1,2$, to the coordinate lines
$\Set{x_2=x_3=0}$ and $\Set{x_1=x_3=0}$ respectively, the coordinate plane $\Set{x_3=0}$ to
the point $[0:0:1:0]$, while the coordinate plane $\Set{x_4=0}$ is mapped to itself.
It is easy to see that composing $\theta$ with the projectivity $g_0$ in \eqref{eq:g0def}
yields a map $\Phi_\theta$ whose degree growth is heuristically computed to be:
\begin{equation}
    1, 2, 4, 7, 12, 18, 25, 34, 44, 55, 68, 82, 97, 114, 132, 151\ldots
\end{equation}
and it is fitted by the following generating function:
\begin{equation}
    \label{eq:gtheta}
    g_\theta(z) = -\frac{2z^4 + z^2 + 1}{(z - 1)^3(z^2 + z + 1)}.
\end{equation}
From \eqref{eq:gtheta}, we infer that the algebraic entropy of
$\Phi_\theta$ vanishes and that the degree growth is asymptotically quadratic.
Nevertheless, the singularity patterns are changed: there is one singularity pattern of the form:
\begin{equation}
    \label{eq:pseudocremsing1}
    \begin{tikzcd}[row sep=tiny]
        \Set{\text{plane}} \arrow{r} & \Set{\text{point}} \arrow{r} 
        & \Set{\text{point}} \arrow[dashed]{r} & \Set{\text{plane}},
    \end{tikzcd}
\end{equation}
analogous to the one of \Cref{fig:singularitieseuler}, but also two
of a new kind:
\begin{equation}
    \label{eq:pseudocremsing2}
    \begin{tikzcd}[row sep=tiny]
        \Set{\text{plane}} \arrow{r} & \Set{\text{line}} \arrow{r} 
        & \Set{\text{line}} \arrow{r} & \Set{\text{line}} \arrow[dashed]{r} 
        & \Set{\text{plane}}.
    \end{tikzcd}
\end{equation}
Work is in progress to understand the geometry of this map and its 
possible extensions.

\subsection*{Acknowledgments}

This work was made in the framework of the Project ``Meccanica dei
Sistemi discreti'' of the GNFM unit of INDAM.
In particular, MG acknowledge support by the MUR through the project PRIN 2020 
``Squarefree Gr\"obner degenerations, special varieties and related topics''~(Project number 2020355B8Y)
and GG acknowledge support of the GNFM through Progetto Giovani GNFM 2023:
``Strutture variazionali e applicazioni delle equazioni alle differenze ordinarie''~(CUP\_E53C22001930001).
MG would like to thank SISSA for its hospitality and support, during his visit
supported by the project ``Nested Hilbert schemes and GIT stability conditions''.

We thank Prof. Bert van Geemen and Prof. Claude-Michel Viallet for their help and assistance during
the preparation of this paper.
In particular, we thank Prof. Bert van Geemen for sharing his knowledge about
desmic surfaces.
We also thank Dr. Max Weinreich who after the appearance of our paper on
\texttt{arXiv} pointed out that our previous exposition in \Cref{rem:integrability}
was slightly misleading.  We are also especially thankful to the anonymous referee for bringing to 
        our knowledge the very interested paper \cite{BedfordKim2004}.

\printbibliography

\end{document}